\newcommand{\eqnsection}{
\renewcommand{\theequation}{\thesection.\arabic{equation}}
   \makeatletter
   \csname  @addtoreset\endcsname{equation}{section}
   \makeatother}
\def\r{{\mathbb R}}
\def\P{{\bf P}}
\def\E{{\bf E}}
\def\N{{\mathbb N}}
\def\T{{\mathbb T}}
\def\1{{\mathds{1}}}
\def\mS{{\underline{S}}}
\def\ns{{\mathbf S}}
\newtheorem{theorem}{Theorem}[section]
\newtheorem{fact}[theorem]{Fact}
\newtheorem{lemma}[theorem]{Lemma}
\newtheorem{proposition}[theorem]{Proposition}
\newtheorem{remark}[theorem]{Remark}
\newtheorem{corollary}[theorem]{Corollary}
\newcommand{\floor}[1]{{\left\lfloor #1 \right\rfloor}}
\newcommand{\R}{\mathbb{R}}
\renewcommand{\T}{\mathbb{T}}
\newcommand{\calL}{\mathcal{L}}
\newcommand{\calF}{\mathcal{F}}
\newcommand{\calC}{\mathcal{C}}
\newcommand{\calD}{\mathcal{D}}
\newcommand{\calM}{\mathcal{M}}
\newcommand{\ind}[1]{{\mathbf{1}_{\left\{ #1 \right\}}}}
\newcommand{\indset}[1]{{\mathbf{1}_{ #1 }}}
\newcommand{\norme}[1]{{\left\Vert #1 \right\Vert}}
\renewcommand{\hat}[1]{\widehat{#1}}
\renewcommand{\tilde}[1]{\widetilde{#1}}
\newcommand{\egaldistr}{{\overset{(d)}{=}}}
\newcommand{\wconv}{{\underset{n \to \infty}{\Longrightarrow}}}
\renewcommand{\bar}[1]{\overline{#1}}
\title{On the trajectory of an individual chosen according to supercritical Gibbs measure in the branching random walk}
\author{Xinxin Chen\thanks{ICJ, Université Lyon 1}, Thomas Madaule\thanks{IMT, Université Paul Sabatier}, Bastien Mallein\thanks{LAGA, Université Paris 13}}
\date{\today}
\begin{document}

\maketitle

\begin{abstract}
Consider a branching random walk on the real line. Madaule~\cite{Mad12} showed the renormalized trajectory of an individual selected according to the critical Gibbs measure converges in law to a Brownian meander. Besides, Chen~\cite{Che14+} proved that the renormalized trajectory leading to the leftmost individual at time $n$ converges in law to a standard Brownian excursion. In this article, we prove that the renormalized trajectory of an individual selected according to a supercritical Gibbs measure also converges in law toward the Brownian excursion. Moreover, refinements of this results enables to express the probability for the trajectories of two individuals selected according to the Gibbs measure to have split before time $t$, partially answering a question of~\cite{DSpo88}.
\end{abstract}

\section{Introduction}
\label{sec:introduction}

A \emph{branching random walk} on the real line is a particle system on $\R$ defined as follow : It starts with an unique individual sitting at the origin, forming the $0^\mathrm{th}$ generation of the process. At time~$1$, this individual dies and gives birth to children, which are positioned on $\R$ according to a point process of law $\mathcal{L}$. These children form the $1^\mathrm{st}$ generation. Similarly, at each time $n \in \N$, every individual $z$ of the $(n-1)^\mathrm{st}$ generation dies, giving birth to children positioned according to an independent copy of $\calL$ translated from the position of $z$.

We denote by $\T$ the genealogical tree of the process. Obviously $\T$ is a Galton-Watson tree. For any individual $z \in \T$, we write $|z|$ for the generation to which $z$ belongs and $V(z) \in \R$ for the position of $z$. With these notations, $(V(z),|z|=1)$ has law $\calL$. If $z \in \T$, for all $k \leq |z|$, we denote by $z_k$ the ancestor of $z$ alive at generation $k$. The collection of positions $(V(z),z\in \T)$, together with the genealogical informations, defines the branching random walk.

Throughout this paper, we suppose that the point process law $\calL$ satisfies some integrability conditions. We assume that the Galton-Watson tree is supercritical, or in other words, that
\begin{equation}
  \label{eqn:supercritical}
  \E\left[ \sum_{|z|=1} 1 \right] > 1.
\end{equation}
Note that we do not assume $\P\left(\sum_{|z|=1} 1 = \infty\right) = 0$, hence a given individual may have infinitely many children in this branching random walk. Under assumption \eqref{eqn:supercritical}, the survival set
\begin{equation}
  \label{eqn:survival}
  S := \left\{ \forall n \in \N, \exists z \in \T, |z| \geq n\right\}
\end{equation}
is of positive probability.

Assume also that we are in the so-called ``boundary case" defined in~\cite{BKy05}, i.e.
\begin{equation}
  \label{eqn:boundary}
  \E\left[ \sum_{|z|=1} e^{-V(z)} \right] = 1 \quad \text{and} \quad \E\left[ \sum_{|z|=1} V(z) e^{-V(z)} \right]=0,
\end{equation}
Under mild assumptions, a branching random walk can be reduced to this case by an affine transformation --see Appendix A in~\cite{Jaf9} for a detailed discussion of this question. Furthermore, we assume the following integrability assumptions to hold
\begin{equation}
  \label{eqn:variance}
  \sigma^2 := \E\left[ \sum_{|z|=1} V(z)^2e^{-V(z)}\right]<\infty,
\end{equation}
as well as
\begin{equation}
  \label{eqn:integrability}
  \E\left[ X (\log_+ X)^2\right] + \E\left[ \tilde{X} \log_+ \tilde{X}\right] < \infty,
\end{equation}
where
\begin{equation}
  X := \sum_{|z|=1} e^{-V(z)} \quad \mathrm{and} \quad \tilde{X} := \sum_{|z|=1} V(z)_+e^{-V(z)}.
\end{equation}
Finally, we also assume the point process law $\mathcal{L}$ is non-lattice, i.e.
\begin{equation}
  \label{eqn:nonLattice}
  \forall a,b > 0, \enskip \P\left( \{aV(z)+b, |z|=1\} \subset \mathbb{Z}\right) < 1.
\end{equation}

It follows from \eqref{eqn:boundary} and the branching property of the branching random walk that
\[ W_{n,1} = \sum_{|z|=n} e^{-V(z)} \quad \mathrm{and} \quad Z_n = \sum_{|z|=n} V(z) e^{-V(z)} \]
are martingales. Chen~\cite{Che14++} proved that given \eqref{eqn:variance}, \eqref{eqn:integrability} is necessary and sufficient to obtain the almost sure convergence of $(Z_n)$ toward a non-negative random variable $Z_\infty$. Moreover, we have $S=\{Z_\infty>0\}$ a.s.

Let $\beta > 1$, we write $W_{n,\beta} = \sum_{|z| = n} e^{-\beta V(z)}$. Madaule~\cite{Mad11} proved the convergence in finite-dimensional distributions of $\left(n^{3\beta/2}W_{n,\beta}, \beta >1\right)$. This result in particular implies the convergence in law of the extremal process of the branching random walk toward a proper limiting point process, which is a randomly shifted decorated Poisson point process with exponential intensity.

In this article, we consider a probability measure on the $n^\mathrm{th}$ generation of the branching random walk, defined on the set $\{W_{n,\beta}>0\} = \{ \{|z|=n\} \neq \emptyset\}$ by
\[
  \nu_{n,\beta} = \frac{1}{W_{n,\beta}}\sum_{|z|=n} e^{-\beta V(z)}\delta_z,
\]
which is often called the \emph{Gibbs measure} in the literature. We prove that on the survival event $S$ of the branching random walk, the trajectory followed by an individual chosen according to $\nu_{n,\beta}$ converges, when suitably rescaled, to a Brownian excursion. For a given individual $z \in \T$ such that $|z| \leq n$, we define
\[
  H^{(n)}(z): = \left( \frac{V(z_{\floor{tn}})}{\sqrt{\sigma^2n}} , 0 \leq t \leq \frac{|z|}{n} \right),
\]
the Brownian normalization of the trajectory followed by individual $z$ up to time $|z|$. For all $a<b$, we denote by $\calD([a,b])$ the space of \emph{càdlàg}\footnote{Left-continuous functions with right limits at each point.} functions on $[a,b]$, equipped with the Skorokhod distance. To shorten notation, we will write $\calD([0,1])$ as $\calD$. The function $H^{(n)}(z)$ is an element of $\calD([0,\frac{|z|}{n}])$. For all $\beta > 1$ and $n \in \N$, on the event $\{W_{n,\beta} > 0\}$, we denote the image measure of $\nu_{n,\beta}$ by $H^{(n)}(\cdot)$ by $\mu_{n,\beta}$, i.e. the measure defined on $\calD$ as
\[
  \mu_{n,\beta} = \frac{1}{W_{n,\beta}} \sum_{|z|=n}e^{-\beta V(z)} \delta_{H^{(n)}(z)}.
\]
The following theorem, which is the main result of the article, gives the asymptotic behaviour of the measure $\mu_{n,\beta}$ as $n \to \infty$.
\begin{theorem}
\label{thm:main}
For all $\beta > 1$, conditionally on the survival event $S$ of the branching random walk, we have
\[ \lim_{n \to \infty} \mu_{n,\beta} = \sum_{k \in \N} p_k \delta_{\epsilon^{(k)}} \quad \text{in law}, \]
where $(\epsilon^{(k)})$ is a sequence of i.i.d. normalized Brownian excursions, and $(p_k, k \in \N)$ follows an independent Poisson-Dirichlet\footnote{For a definition of the two-parameters Poisson-Dirichlet distribution, see \cite{PiY97}} distribution with parameter $(\frac{1}{\beta},0)$.
\end{theorem}

An heuristic for this result is developed in the forthcoming Section~\ref{sec:heuristics}.

\begin{remark}
A direct consequence of Theorem~\ref{thm:main} is that the --annealed-- measure $\E(\mu_{n,\beta}|S)$ converges weakly to the law of a normalized Brownian excursion.
\end{remark}

The case of a critical Gibbs measure $\beta=1$ has been investigated in~\cite{Mad12}. It is proved that
\[
  \lim_{n \to \infty} \mu_{n,1}(F) = F(\calM) \textrm{ in probablity},
\]
where $\mathcal{M}$ is a Brownian meander. Formally, in the case $\beta = \infty$, the measure $\mu_{n,\infty}$ is the uniform measure on the trajectories leading to the leftmost position at time $n$, which has been treated in~\cite{Che14+}. For $\beta<1$, Pain \cite{Pain} proved that the trajectory of a particle chosen according to $\nu_{n,\beta}$ behaves as a random walk with positive drift. In particular, after removing the drift, the rescaled trajectory converges toward a Brownian path. Moreover, Pain also obtained the asymptotic behaviour of trajectories sampled according to the law $\mu_{n,\beta_n}$ with $\beta_n \to 1$ as $n \to \infty$, giving a detailed account of the phase transition occurring at $\beta=1$ for this measure.

Using the techniques leading to Theorem~\ref{thm:main}, we obtain informations on the genealogy of two individuals sampled according to the Gibbs measure $\nu_{n,\beta}$. For $z,z'\in \T$, we set $\mathrm{MRCA}(z,z')$ to be the generation at which the most recent common ancestor of $z$ and $z'$ was alive, in other words,
\[ \mathrm{MRCA}(z,z') = \max\{k \leq \min(|z|,|z'|) : z_k = z'_k \}. \]
Derrida and Spohn conjectured in~\cite{DSpo88} that for any $\beta > 1$
\begin{equation}
  \label{conj:ds}
  \nu_{n,\beta}^{\otimes 2}\left( \frac{\mathrm{MRCA}(z,z')}{n} \in dx \right)  \underset{n\to\infty}{\Longrightarrow} \rho_\beta \delta_1 + (1-\rho_\beta) \delta_0,
\end{equation}
where $\rho_\beta$ is a random variable such that $\lim_{\beta \to \infty} \rho_\beta = 1$ and $\lim_{\beta \to 1} \rho_\beta = 0$ in probability. When individuals are sampled according to $\nu_{n,\beta}$ with $\beta < 1$, Chauvin and Rouault~\cite{ChR97} proved that $\mathrm{MRCA}(z,z')$ converges in law, thus $\rho_\beta=0$ for $\beta < 1$. This result was then extended by Pain~\cite{Pain}, who proved the same convergence holds when considering $\nu_{n,\beta_n}$ with $\beta_n \to 1$.

The conjecture of Derrida and Spohn was partially proved by Bovier and Kurkova~\cite{BoK04} for some binary branching processes with Gaussian increments, by Arguin and Zindy~\cite{ArZ12} for the overlapping probability in the $2$-dimensional discrete Gaussian free field, and by Jagannath~\cite{Jag16} for the binary branching random walk. In these articles, the main step of the proofs consist in proving that the model satisfies the so-called \emph{Ghirlanda-Guerra identities}. These identities then imply the convergence in \eqref{conj:ds}, with $\rho_\beta$ being the sum of the squares of the elements of a Poisson-Dirichlet distribution with parameter $(\frac{1}{\beta},0)$. We mention that after the first appearance of this article, the conjecture \eqref{conj:ds} was proved for general branching random walks in \cite{Mal18}, using simple considerations on the limiting extremal process of the branching random walk.

We note here that an immediate consequence of Theorem~\ref{thm:main} is the following result reminiscent of the conjecture of Derrida and Spohn. It characterises the law of first splitting time of trajectories before time $t$.
\begin{corollary}
\label{cor:interest}
For any $\beta > 1$, we have 
\[ \nu_{n,\beta}^{\otimes 2}\left( \inf\{ t > 0 : V(z_\floor{nt}) \neq V(z'_\floor{nt}\} \in dx \right)  \underset{n\to\infty}{\Longrightarrow} \rho_\beta \delta_1 + (1-\rho_\beta) \delta_0,\] 
where $(p_k, k \geq 1)$ has Poisson-Dirichlet distribution with parameter $(\frac{1}{\beta},0)$ and $\rho_\beta= \sum_{k \in \N} p_k^2$.
\end{corollary}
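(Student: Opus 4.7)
I would derive Corollary \ref{cor:interest} from a two-particle extension of Theorem \ref{thm:main} combined with a sandwich by uniformly continuous functionals. The relevant rewriting is
\[
  \nu_{n,\beta}^{\otimes 2}\bigl(V(z_{\floor{nt}}) \neq V(z'_{\floor{nt}})\bigr) = \mu_{n,\beta}^{\otimes 2}\bigl(h(t) \neq h'(t)\bigr),
\]
using $H^{(n)}(z)(t) = V(z_{\floor{nt}})/\sqrt{\sigma^2 n}$. The heuristic is transparent from Theorem \ref{thm:main}: under the limit $\sum_{k,k'}p_k p_{k'} \delta_{(e_k,e_{k'})}$, the two trajectories agree at $t$ iff $k = k'$ (total mass $\pi_\beta$), since two independent normalised Brownian excursions $e_k, e_{k'}$ satisfy $e_k(t) \neq e_{k'}(t)$ almost surely for $k \neq k'$ (both marginals are absolutely continuous at any fixed $t \in (0,1)$).

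The first step is to lift Theorem \ref{thm:main} from a single trajectory to a pair, giving
\[
  \mu_{n,\beta}^{\otimes 2}(F) \wconv \sum_{k, k' \in \N} p_k p_{k'} F(e_k, e_{k'}), \qquad F \in \calC^u_b(\calD \times \calD).
\]
This follows by applying Theorem \ref{thm:main} to vector-valued UC test functions $(F_1, F_2)$ (or by a Cram\'er--Wold argument), using the continuous mapping theorem on the product $\mu_{n,\beta}(F_1)\mu_{n,\beta}(F_2) = \mu_{n,\beta}^{\otimes 2}(F_1 \otimes F_2)$, and finally Stone--Weierstrass density of tensor products in $\calC^u_b(\calD^2)$.

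The second step is to construct a UC sandwich. Setting $\bar h^\delta(t) := \delta^{-1}\int_{t-\delta}^t h(s)\,ds$ (Skorokhod-UC for $\delta \in (0,t)$) and
\[
  F_{\delta,\epsilon}(h, h') := \min\!\bigl(1, \epsilon^{-1}|\bar h^\delta(t) - \bar{h'}^\delta(t)|\bigr), \qquad G_{\delta,\epsilon} := 1 - F_{\delta,\epsilon},
\]
both functions lie in $\calC^u_b(\calD^2)$. Two BRW trajectories sharing an ancestor at generation $\floor{nt}$ coincide on $[0, t]$ and thus have equal local averages at $t$, which gives $F_{\delta,\epsilon} \leq \mathbf{1}_{z_{\floor{nt}} \neq z'_{\floor{nt}}}$ and $G_{\delta,\epsilon} \geq \mathbf{1}_{z_{\floor{nt}} = z'_{\floor{nt}}}$. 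Applying the two-particle convergence, then sending $\delta \to 0$ (continuity of the $e_k$) and $\epsilon \to 0$ (absolute continuity of $e_k(t) - e_{k'}(t)$ for $k \neq k'$), the double limits of the two sandwich quantities are $1-\pi_\beta$ and $\pi_\beta$ respectively. Together with the trivial $\mathbf{1}_{h(t) \neq h'(t)} \leq \mathbf{1}_{z_{\floor{nt}} \neq z'_{\floor{nt}}}$, this yields $\liminf \mu_{n,\beta}^{\otimes 2}(h(t) \neq h'(t)) \geq 1 - \pi_\beta$ and $\limsup \mu_{n,\beta}^{\otimes 2}(z_{\floor{nt}} = z'_{\floor{nt}}) \leq \pi_\beta$.

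The main obstacle is the matching lower bound $\liminf \mu_{n,\beta}^{\otimes 2}(z_{\floor{nt}} = z'_{\floor{nt}}) \geq \pi_\beta$, which would yield $\limsup \mu_{n,\beta}^{\otimes 2}(h(t) \neq h'(t)) \leq 1-\pi_\beta$. This does not follow from weak convergence alone because the set $\{z_{\floor{nt}} = z'_{\floor{nt}}\}$ is not a continuity set for the limiting measure; it requires showing that two individuals drawn from the same limiting atom $e_k$ really do share an ancestor at height $\floor{nt}$ for any fixed $t < 1$, i.e., the MRCA dichotomy that cluster common ancestors lie at height $n - O(1)$. This is implicit in the extremal point-process structure underlying Madaule~\cite{Mad11} and can be extracted via the spinal decomposition together with a truncated many-to-one estimate. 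Granted this structural fact, plus a routine local CLT for the Gibbs-biased spines killing the residual ``distinct-ancestors-with-coincident-positions'' event $\{z_{\floor{nt}} \neq z'_{\floor{nt}}, V(z_{\floor{nt}}) = V(z'_{\floor{nt}})\}$ of probability $O(1/\sqrt n)$, the corollary follows.
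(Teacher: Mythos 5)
Your route coincides with the paper's intended one: project to the finite-dimensional marginal at time $t$ (Corollary~\ref{cor:fd}, built from Theorem~\ref{thm:main} via Remark~\ref{rem:inthm}) and argue about the diagonal mass of $(\mu^{FD}_{n,\beta})^{\otimes 2}$. You correctly identify -- and this is the key content of your write-up -- that this is \emph{not} a direct corollary of weak convergence of random measures, contrary to what the remark after Corollary~\ref{cor:fd} asserts: the map $\mu \mapsto \mu^{\otimes 2}(\{x=y\})$ is upper but not lower semicontinuous for the weak topology, so Corollary~\ref{cor:fd} alone only supplies $\limsup_n \nu_{n,\beta}^{\otimes 2}\bigl(V(z_{\floor{nt}}) = V(z'_{\floor{nt}})\bigr) \leq \pi_\beta$, i.e.\ $\liminf_n \nu_{n,\beta}^{\otimes 2}(\neq)\geq 1-\pi_\beta$. (This half is obtained most cleanly by testing against $G_\epsilon(x,y):=\min(1,\epsilon^{-1}|x-y|)$ on $\R^2$ after projecting by $\Pi^t$, a bounded Lipschitz minorant of $\ind{x\neq y}$, rather than with your $\delta$-averaged functional.) The matching lower bound $\liminf_n \nu_{n,\beta}^{\otimes 2}\bigl(z_{\floor{nt}}=z'_{\floor{nt}}\bigr) \geq \pi_\beta$ is the genuinely hard part, and you are right that it needs the clustering/decoration structure of the extremal process, namely that two particles contributing to the same limit atom $e_k$ split only at generation $n-O(1)$.

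Two criticisms. First, a directional slip: you establish $F_{\delta,\epsilon}\leq \ind{z_{\floor{nt}}\neq z'_{\floor{nt}}}$ (correct: common ancestor at $\floor{nt}$ forces equal $\delta$-averages), and from this conclude $\liminf \mu_{n,\beta}^{\otimes 2}(h(t)\neq h'(t)) \geq 1-\pi_\beta$ by citing $\ind{h(t)\neq h'(t)}\leq\ind{z_{\floor{nt}}\neq z'_{\floor{nt}}}$; but that inequality points the wrong way and gives no lower bound on $\mu_{n,\beta}^{\otimes 2}(h(t)\neq h'(t))$. Moreover the ``distinct ancestors with coincident positions'' event can only \emph{increase} the diagonal mass, so the local CLT you invoke to eliminate it is unnecessary once the semicontinuity upper bound is in place. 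Second, and more importantly, the crux -- showing that the $\nu_{n,\beta}^{\otimes 2}$-mass of pairs whose MRCA falls in the window $(\floor{nt}, n-b]$ vanishes as $n\to\infty$ then $b\to\infty$, via the spinal decomposition, Lemmas~\ref{lem:rw}--\ref{cor:rw} and Fact~\ref{fact:splitting} -- is described but not carried out; ``can be extracted'' is a statement of intent, not a proof. Your proposal is therefore incomplete, though it is more forthcoming than the paper's own one-line remark about what structural fact is actually doing the work.
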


This corollary can be seen as an explicit computation of the well-known fact that in a branching random walk, two individuals within distance $O(1)$ from the leftmost position are either close relatives, or their ancestral lineages split early in the process. In the context of Gibbs measure, the probability of an early splitting is $1-\rho_\beta$. Moreover, we observe that when $\beta\to\infty$, $1-\rho_\beta$  goes to zero. This is consistent with the fact that $\mu_{n,\infty}$ only puts mass on particles which are at the leftmost position at time $n$, which are eventually close relatives when \eqref{eqn:nonLattice} is verified\footnote{If \eqref{eqn:nonLattice} does not hold, then this is no longer true and individuals from distinct families can be at the leftmost position at the same time, cf Pain~\cite[Footnote 3]{Pain}.}. Similarly, when $\beta \to 1$, $1-\rho_\beta$ goes to $1$ ; therefore the corresponding paths are asymptotically independent, which coincides with the weak convergence obtained in~\cite[Equation (3.3)]{Mad12}.

\subsection{Link between the Gibbs measure and the extremal process}
\label{sec:heuristics}

 To prove Theorem~\ref{thm:main}, the main idea is to understand, for all continuous bounded function $F$, the tail decay of the random variable
\begin{equation}
  \label{eqn:defTildeMu}
  \tilde{\mu}_{n,\beta}(F) := n^{3\beta/2} W_{n,\beta}\times\mu_{n,\beta}(F) = n^{3\beta/2} \sum_{|z|=n} e^{-\beta V(z)} F(H^{(n)}(z)),
\end{equation}
with $\tilde{\mu}_{n,\beta}$ the non-normalized version of $\mu_{n,\beta}$, like in Chen~\cite{Che14+} and Madaule~\cite{Mad11}. This tail decay is then used to obtain the limit Laplace transform of $\tilde{\mu}_{n,\beta}(F)$ in Proposition~\ref{propfi}, thanks to the branching property of the branching random walk. We expose here an interpretation of the convergence obtained in Theorem \ref{thm:main}, as well as the main steps of the proof given in this article. We begin with a heuristic for the reason the aforementioned convergence to hold.

We first recall that Madaule \cite{Mad11} proved the convergence of the extremal process of the branching random walk $\sum_{|z|=n} \delta_{V(z) - \frac{3}{2} \log n}$ toward a Cox process with intensity $Z_\infty e^{x}$, decorated by i.i.d. point processes. This convergence can be interpreted as follows. Recall (see e.g. \cite[Theorem 4.5]{Mall}) that particles close to the minimal position at time $n$ are with high probability either close relatives (their MRCA is $n-o(n)$) or come from distinct families (their MRCA is $o(n)$). For  each $n \in \N$, we call a ``local leader'' an individual being the smallest\footnote{Breaking ties uniformly at random.} among all its relatives with a most recent common ancestor alive at a generation larger than $n/2$. The family of positions of local leaders forms a family of (mostly) i.i.d. random variables, which thus converge toward a Poisson point process with intensity $Z_\infty e^x \mathrm{d}x$. However, close relatives of these local leaders are within distance $O(1)$ from their positions. Hence the relative positions of families with respect to their local leader converge toward i.i.d. point processes, that form the decorations of the limiting process. This interpretation was made rigorous in \cite{Mal18}.

We now recall that Chen \cite{Che14+} proved that the rescaled path followed by the individual reaching the minimal position at time $n$ converges toward a Brownian excursion. As the most recent common ancestor between local leaders is of order $1$, each of them can be considered as individual reaching the minimal position at time $n-O(1)$ of an independent branching random walk. Therefore, the trajectory of each local leader converges toward an independent Brownian excursion. The limiting normalized trajectories of all close relatives to a given local leader are the same as the one of the local leader, as they only split from the leader's trajectory in the last few steps, and the normalization will make this difference disappear --this is a reason why we consider uniformly continuous functions in our main theorem.

Therefore, Theorem \ref{thm:main} becomes natural in sight of the following observation : sampling the limiting trajectory of a family at random in a decorated Poisson point process with intensity $Z_\infty e^x$ according to the Gibbs measure with parameter $\beta$ is the same thing as sampling it according to a Poisson-Dirichlet distribution with parameter $(\frac{1}{\beta},0)$ (this can be observed in \cite[Theorem~4.1]{Mal18}).

The main steps of the proof of Theorem \ref{thm:main} follow loosely from the above heuristic. In sight of the many-to-one lemma, which states that additive moments of the branching random walk can be computed through random walk estimates, we first prove in Lemma~\ref{lem:rw} that the rescaled shape of a random walk conditioned to stay non-negative and end up at distance $O(\log n)$ of $0$ is asymptotically independent from the endpoint of that random path. If there were an unique family of individuals close to the minimal position at time $n$, this would allow us to decouple the choice of the individual using the supercritical Gibbs measure (which is concentrated on the individuals close to the minimal position) and the trajectory followed by the family (which converges toward a Brownian excursion).

This decoupling can actually be achieved (in Proposition~\ref{tailprop}) by conditioning the minimal position $m_n$ to be small, say smaller than $\frac{3}{2} \log n - A$ for $A$ large enough. Indeed, in this event of small probability, only one local leader reaches this unusually small minimal position, hence only one family is charged by the Gibbs measure with high probability. Conditioning the minimal displacement to be small is similar to conditioning the random variable $W_{n,\beta}$ to be large, therefore the above observation allows an explicit computation of the tail of the Laplace transform of $\tilde{\mu}_{n,\beta}(F)$.

The proof of Theorem~\ref{thm:main} finally follows from a standard branching argument. We cut the branching random walk at a large but finite generation $k$. Each subtree is then an independent branching random walk, and the ones that contribute to $W_{n,\beta}$ will typically be the ones with a small minimal position at time $n-k$. Therefore, using the estimate of the tail of the Laplace transform allows to give an expression of the Laplace transform. To conclude, it is then enough to compare this Laplace transform with the one of $\sum_{k \in \N} p_k F(\epsilon^{(k)})$.

\paragraph*{Organization of the paper.}
We introduce in Section~\ref{subsec:manytoone} the spinal decomposition and the many-to-one lemma. We obtain in Section~\ref{subsec:rw} some random walk estimates, including the asymptotic independence between the rescaled shape and the limiting position of random walk conditioned to make an excursion. We compute in Section~\ref{sec:taildecay} a tight estimate on the tail of the Laplace transform of $\tilde{\mu}_{n,\beta}(F)$. Finally the proofs of Theorem~\ref{thm:main} and Corollary~\ref{cor:interest} are given in Section~\ref{sec:conclusion}.

\section{Many-to-one lemma and random walk estimates}
\label{sec:usefullemmas}

We introduce in a first time the Lyon's change of measure of the branching random walk and the spinal decomposition. This result enables to compute additive moments of the branching random walk using random walk estimates. In a second part, we consider a centered random walk with finite variance, conditioned to stay above 0 until time $n$ and ending at time $n$ at distance of order $o(\sqrt{n})$, and obtain the asymptotic independence with the rescaled shape and the endpoint of this random walk.

\subsection{Lyon's change of measures and the many-to-one lemma}
\label{subsec:manytoone}

For any $a\in\R$, let $\P_a$ be the probability measure of the branching random walk started from $a$, and let $\E_a$ be the corresponding expectation. We recall that $(W_{n,1}, n \in \N)$ is a non-negative martingale with respect to the natural filtration $\calF_n = \sigma(u,V(u),|u| \leq n)$. We define a new probability measure $\bar{\P}_a$ on $\calF_\infty$ such that for all $n \in \N$,
\begin{equation}
  \label{eqn:measurechange}
  \dfrac{d\bar{\P}_a}{d\P_a}\Big\vert_{\calF_n} = e^a W_{n,1}.
\end{equation}
The so-called spinal decomposition, introduced by Lyons in~\cite{Lyo97} gives an alternative construction of the measure $\bar{\P}_a$, by introduction of a special ray, the ``spine", along which reproduction is modified.

We introduce another point process law $\widehat{\calL}$ with Radon-Nikod\'ym derivative $\sum_\ell e^{-\ell}$ with respect to the law of $\calL$. The branching random walk with spine starts with one individual located at $a$ at time $0$, denoted by $\omega_0$. It generates its children according to the law $\widehat{\calL}$. Individual $\omega_1$ is chosen among the children $z$ of $\omega_0$ with probability proportional to $e^{-V(z)}$. Then, for all $n \geq 1$, individuals at the $n^\mathrm{th}$ generation die, giving birth to children independently according to the law $\calL$, except for the individual $\omega_n$ which uses the law $\widehat{\calL}$. The individual $\omega_{n+1}$ is chosen at random among the children $z$ of $\omega_n$, with probability proportional to $e^{-V(z)}$. We denote by $\T$ the genealogical tree of this process, and by $\hat{\P}_a$ the law of $(V(u),u\in \T, (\omega_n, n \geq 0))$ as we just defined.

\begin{proposition}
\label{prop:spinaldecomposition}
For any $n \in \N$, we have $\left.\hat{\P}_a\right\vert_{\calF_n} = \left.\bar{\P}_a\right\vert_{\calF_n}$. Moreover, for any $z \in \T$ such that $|z|=n$, we have $\hat{\P}_a\left( \omega_n = z \left| \calF_n \right.\right) = \frac{e^{-V(z)}}{W_{n,1}}$, and $(V(\omega_n), n \geq 0)$ is a centered random walk under $\hat{\P}_a$, starting from $a$, and with variance $\sigma^2$.
\end{proposition}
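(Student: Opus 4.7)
The plan is to unpack the explicit construction of $\hat{\P}_a$ and verify the three assertions via a single telescoping computation along the spine. The crucial identity is that at each generation $k$, the Radon--Nikod\'ym factor $\sum_\ell e^{-\ell}$ of $\widehat{\calL}$ against $\calL$ cancels exactly the denominator arising in the weighted selection of $\omega_{k+1}$: writing $\{\ell_i\}$ for the displacements of $\omega_k$'s children and fixing a labelled candidate $z_j$ among them, for any bounded measurable $G$,
\[
\hat{\E}\bigl[G(\{\ell_i\}) \ind{\omega_{k+1}=z_j} \,\big|\, V(\omega_k)\bigr] = \E\Bigl[G(\{\ell_i\}) \cdot \sum_\ell e^{-\ell}\cdot \frac{e^{-\ell_j}}{\sum_{\ell'}e^{-\ell'}}\Bigr] = \E\bigl[G(\{\ell_i\}) e^{-\ell_j}\bigr],
\]
where the right-hand expectation is now taken under the unmodified law $\calL$.

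Iterating this identity along the spine, and using that non-spinal individuals reproduce independently according to $\calL$ in both constructions, I would obtain the master formula: for any nonnegative $\calF_n$-measurable $F$ and any label $z_*$ of a potential individual at generation $n$,
\[
\hat{\E}_a\bigl[F\,\ind{\omega_n = z_*}\bigr] = \E_a\bigl[F\,\ind{z_* \in \T}\, e^{a - V(z_*)}\bigr].
\]
Summing this over all labels of individuals at generation $n$ collapses the indicator on the left (the spine always exists) and yields $\hat{\E}_a[F] = \E_a[F\,e^a W_{n,1}] = \bar{\E}_a[F]$, which is exactly \eqref{eqn:measurechange}. Comparing the master formula for fixed $z_*$ with $\hat{\E}_a[F\,\hat{\P}_a(\omega_n = z_*|\calF_n)]$ and using the change of measure just established then pins down $\hat{\P}_a(\omega_n = z \,|\, \calF_n) = e^{-V(z)}/W_{n,1}$ on $\{z\in\T\}$, which is the second claim.

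For the random walk statement, I would apply the master formula with $n=1$ and $F = f(V(z_*))$ and sum over $|z_*|=1$ to get
\[
\hat{\E}_a\bigl[f(V(\omega_1))\bigr] = \E_a\Bigl[\sum_{|z|=1} f(V(z)) e^{-(V(z)-a)}\Bigr] = \E\Bigl[\sum_\ell e^{-\ell} f(a+\ell)\Bigr].
\]
Hence the increment $V(\omega_1)-a$ has a law independent of $a$, with mean zero by \eqref{eqn:boundary} and variance $\sigma^2$ by \eqref{eqn:integrability}. Stationarity and independence of the successive increments $V(\omega_{k+1})-V(\omega_k)$ follow from the Markovian nature of the construction: conditionally on $V(\omega_k)$, the reproduction of $\omega_k$ and the choice of $\omega_{k+1}$ depend only on $V(\omega_k)$ and are independent of the past $\calF_k$.

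The main obstacle here is notational rather than conceptual: one must carefully label the children of $\omega_k$ as an enumerated multiset so that, within a single expectation, the distinguished child $\omega_{k+1}$ and the Radon--Nikod\'ym factor $\sum_\ell e^{-\ell}$ can be manipulated without ambiguity. Once this bookkeeping is fixed, the cancellation displayed above is immediate, the master formula drops out, and the three claims follow essentially at once.
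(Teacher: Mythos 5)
Your argument is correct and is essentially the standard proof of the spinal decomposition due to Lyons \cite{Lyo97}; the paper itself states Proposition~\ref{prop:spinaldecomposition} without proof and simply cites that reference. The telescoping cancellation of the Radon--Nikod\'ym factor $\sum_\ell e^{-\ell}$ against the normalisation in the weighted spine selection, the resulting master formula $\hat{\E}_a[F\,\ind{\omega_n=z_*}]=\E_a[F\,\ind{z_*\in\T}e^{a-V(z_*)}]$, the summation over spine labels to recover $e^a W_{n,1}$, and the identification of the increments of $V(\omega_n)$ via \eqref{eqn:boundary} and \eqref{eqn:integrability} are exactly the ingredients of the proof in \cite{Lyo97} and its standard expositions.
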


 In particular, this proposition implies the many-to-one lemma, which has been introduced for the first time by Peyri\`ere in~\cite{Pey}, and links additive moments of the branching random walks with random walk estimates.
\begin{lemma}
\label{lem:manytoone}
There exists a centered random walk $(S_n, n \geq 0)$, starting from $a$ under $\P_a$, with variance $\sigma^2$ such that for all $n \geq 1$ and $g : \R^n \to \R_+$ measurable, we have
\begin{equation}
  \label{eqn:manytoone}
  \E_a\left[ \sum_{|z|=n} g(V(z_1),\cdots V(z_n))\right] = \E_a\left[ e^{S_n-a} g(S_1,\cdots S_n) \right]
\end{equation}
\end{lemma}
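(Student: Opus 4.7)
The plan is to derive the many-to-one identity directly from the spinal decomposition just stated in Proposition \ref{prop:spinaldecomposition}; no additional analytic input is needed. I would set $S_k := V(\omega_k)$ on the probability space carrying $\hat{\P}_a$. The last assertion of Proposition \ref{prop:spinaldecomposition} then gives exactly the random walk with the announced law: centred, starting from $a$, with step variance $\sigma^2$. Thus the ``random walk $(S_n)$'' appearing in the statement is nothing other than the position of the spine under $\hat{\P}_a$.

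The algebraic core of the argument is to insert the factor $e^{V(z)}e^{-V(z)}$ inside the sum and to recognise the resulting weighted sum as a conditional expectation of the spine. Setting $\psi(z) := e^{V(z)} g(V(z_1), \ldots, V(z_n))$, I would write
$$\sum_{|z|=n} g(V(z_1),\ldots,V(z_n)) \;=\; \sum_{|z|=n} e^{-V(z)}\psi(z) \;=\; W_{n,1}\sum_{|z|=n}\frac{e^{-V(z)}}{W_{n,1}}\psi(z),$$
and then use the identity $\hat{\P}_a(\omega_n = z \mid \calF_n) = e^{-V(z)}/W_{n,1}$ supplied by Proposition \ref{prop:spinaldecomposition} to rewrite the last sum as $W_{n,1}\,\hat{\E}_a[\psi(\omega_n)\mid\calF_n]$.

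It remains to take $\E_a$ of both sides and invoke the change of measure \eqref{eqn:measurechange}. Combined with the equality $\hat{\P}_a = \bar{\P}_a$ on $\calF_n$, this gives the elementary identity $\E_a[W_{n,1} X] = e^{-a}\hat{\E}_a[X]$ for any nonnegative $\calF_n$-measurable $X$, applied here with $X = \hat{\E}_a[\psi(\omega_n)\mid\calF_n]$. Composing the two steps and identifying the law of $(V(\omega_k))_{0 \le k \le n}$ under $\hat{\P}_a$ with that of $(S_k)_{0 \le k \le n}$, I obtain
$$\E_a\Big[\sum_{|z|=n} g(V(z_1),\ldots,V(z_n))\Big] \;=\; e^{-a}\hat{\E}_a\bigl[e^{V(\omega_n)} g(V(\omega_1),\ldots,V(\omega_n))\bigr] \;=\; \E_a\bigl[e^{S_n-a} g(S_1,\ldots,S_n)\bigr],$$
which is exactly \eqref{eqn:manytoone}. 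Nonnegativity of $g$ removes any integrability concern, and extension from bounded to general nonnegative $g$ is handled by monotone convergence if desired. There is no real obstacle here: the substantive work has been absorbed into Proposition \ref{prop:spinaldecomposition}, and all that remains is the bookkeeping sketched above.
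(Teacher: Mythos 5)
Your proof is correct and follows essentially the same route as the paper: you combine the change of measure \eqref{eqn:measurechange}, the conditional law $\hat{\P}_a(\omega_n=z\mid\calF_n)=e^{-V(z)}/W_{n,1}$, and the identification of $(V(\omega_k))$ as a centred random walk with variance $\sigma^2$, exactly the three facts from Proposition \ref{prop:spinaldecomposition} that the paper uses. The only difference is cosmetic (you invoke the conditional law before the measure change, the paper does the reverse), so the arguments are the same in substance.
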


\begin{proof}
We use Proposition~\ref{prop:spinaldecomposition} to compute
\begin{align*}
  \E_a\left[ \sum_{|z|=n} g(V(z_1),\cdots V(z_n))\right]
  &= \bar{\E}_a \left[ \frac{e^{-a}}{W_{n,1}} \sum_{|z|=n} g(V(z_1), \cdots V(z_n)) \right]\\
  &= e^{-a} \hat{\E}_a\left[ \sum_{|z|=n} \ind{z = \omega_n} e^{V(z)} g(V(z_1),\cdots V(z_n)) \right]\\
  &= \hat{\E}_a \left[ e^{V(\omega_n) - a} g(V(\omega_1), \cdots, V(\omega_n)) \right].
\end{align*}
Therefore we define the random walk $S$ under $\P_a$ to have the same law as $(V(\omega_n), n \geq 0)$ under $\hat{\P}_a$, which ends the proof.
\end{proof}

\subsection{Approximation of a random walk excursion}
\label{subsec:rw}

In this subsection, $(S_n, n \geq 0)$ is a centered random walk on $\R$ with finite variance $\sigma^2$, which is non-lattice, i.e. the support of the law of $S_1$ is not included in some discrete additive subgroup of $\R$. We write, for $0 \leq m \leq n$, $\mS_{[m,n]} = \min_{m \leq k \leq n} S_k$ and $\mS_n = \mS_{[0,n]}$ the minimal position of the random walk until time $n$. We introduce in a first time a piece of notation, before computing the probability for a random walk to make an excursion of length $n$ above 0, and the asymptotic independence between the endpoint and the shape of the excursion, on that event.

We denote by $\calC_b(\calD)$ be the set of continuous bounded functions from $\calD$ to $\R$, as well as $\calC^u_b(\calD)\subset\calC_b(\calD)$ the collection of uniformly continuous functions. In this section, we often prove the estimates for uniformly continuous functions in a first time, before extending them to any continuous bounded functions.

\subsubsection{Some random walk notation and preliminary results}

\paragraph*{The ballot theorem.}
We present the following estimates, which bound the probability for a random walk to make an excursion of length $n$ above a given level. Let $\lambda\in(0,1)$. There exists a constant $c_1>0$ such that for any $b\geq a\geq 0$, $x,y\geq 0$ and $n\geq 1$, we have
\begin{equation}
\label{eq:elementary}
  \P_x\Big(S_n\in[y+a,y+b], \mS_n\geq0, \mS_{[\lambda n,n]}\geq y\Big)\leq c_1(1+x)(1+b-a)(1+b) n^{-3/2}.
\end{equation}
This classical estimate can be found for example in~\cite[Lemma 2.4]{AiS10}.

\paragraph*{Ladder epochs and height processes.}
We denote by $(\tau_k^+, k \geq 0)$ and $(H_k^+, k \geq 0)$ the strictly ascending ladder epochs and the height process, writing $\tau^+_0=0, H^+_0=0$ and, for $k \geq 1$,
\begin{equation}
  \label{eqn:ladderepoch_def}
  \tau_k^+ = \inf\{n> \tau_{k-1}^+ : S_n > H_{k-1}^+\} \quad \mathrm{and} \quad H_k^+ = S_{\tau_k^+}.
\end{equation}
Note that $(\tau_k^{+}, k\geq0)$ and $(H_k^{+}, k\geq0)$ are renewal processes, i.e., random walks with i.i.d. non-negative increments. Similarly, we write $\tau^-$ and $H^-$ the strictly ascending ladder epoch and the height process associated to $-S$.
It is given in \cite[Theorem A]{Koz76} that there exist two constants $C_{\pm}>0$ such that
\begin{equation}
  \P(\tau_1^\pm>n) = \P(\min_{k \leq n}(\mp S_k) \geq 0) =\frac{C_\pm}{\sqrt{n}}+o(n^{-1/2}).
  \label{eq:probstaypositive}
\end{equation}

\paragraph{Renewal function.}
We write $V^-(\cdot)$ (respectively $V^+(\cdot)$) the renewal function associated to $(H^-_k,k \geq 0)$ (resp. $(H^+_k, k \geq 0)$), defined by
\begin{equation}
  \label{eqn:renwal_def}
  \forall x \geq 0, \quad V^-(x) = \sum_{k \geq 0} \P\left(H_k^- \leq x\right).
\end{equation}
Observe that $V^-$ is a non-decreasing, right-continuous function with $V^-(0)=1$. We can rewrite $V^-$ in the following way
\begin{equation}
  V^-(x) = \sum_{k \geq 0} \P\left( -x \leq S_k < \mS_{k-1} \right)
\end{equation}
As a consequence of the Renewal Theorem in~\cite[p. 360]{Fel71}, there exist two constants $c_\pm>0$ such that
\begin{equation}
  V^\pm(x) \sim_{x \to \infty} c_\pm x.
\end{equation}

\paragraph*{Local measure of the random walk staying non-negative.}
We introduce, for $n \geq 1$, the measure
\begin{equation}
  \pi_n^+(x, dy):=\P_x\Big(\mS_n\geq0, S_n\in dy\Big),
\end{equation}
Let $K>0$, it has been proved by Doney~\cite{Don12} that uniformly in $x=o(\sqrt{n})$ and $y=o(\sqrt{n})$,
\begin{equation}
  \label{eq:doneyo}
  \pi_n^+(x,[y,y+K]) = \frac{1}{\sigma \sqrt{2\pi } n^{3/2}}V^-(x)\int_{[y, y+K]}V^+(z)dz\left(1+o_n(1)\right).
\end{equation}
and that uniformly in $x=o(\sqrt{n})$ and $y\in[0,\infty)$,
\begin{equation}
  \label{eq:doney+}
  \pi_n^+(x, [y, y+K])=\frac{C_-y}{\sigma^2 n^{3/2}} e^{-\frac{y^2}{2n\sigma^2}}KV^-(x) + o(n^{-1}).
\end{equation}
Obviously, similar estimates hold for $\pi^-$ the measure associated to $-S$.

\paragraph*{Random walk conditioned to stay non-negative.}
We observe that the renewal function $V^-$ is invariant for the semigroup of the random walk killed when it first enters the negative half-line $(-\infty,0)$, i.e.
\begin{equation}
  \label{eq:invariant}
  \forall x \geq 0,\ \forall N \in \N,\ V^-(x) = \E\left[ V^-(x+S_N) \ind{\mS_N \geq -x} \right].
\end{equation}
This equality can be found in~\cite{Koz76}.

Using \eqref{eq:invariant}, for all $x \geq 0$, we define the probability measure $\P_x^{\uparrow}$ by
\begin{equation}\label{eq:Pup}
  \P_x^{\uparrow}(B):=\frac{1}{V^-(x)}\E_x\left(1_B V^-(S_N); \mS_N\geq0\right),
\end{equation}
for $N \geq 1$ and $B \in \sigma(S_0,\ldots S_N)$. We call $\P_x^{\uparrow}$ the law of the random walk conditioned to stay positive. For any positive sequence $(x_n)$ such that $\frac{x_n}{\sqrt{\sigma^2n}}\rightarrow x\geq0$, we have the following invariance principle, proved in~\cite[Theorem~1.1]{CCh08},
\begin{equation}\label{eq:cvtobessel}
\forall F\in\calC_b(\calD),\quad \E_{x_n}^{\uparrow}\Big(F\left(\tfrac{S_{\floor{nt}}}{\sqrt{\sigma^2n}}; t\in[0,1]\right)\Big)\xrightarrow[n\rightarrow\infty]{} \E_x\Big(F(R)\Big),
\end{equation}
where $R=(R(t); t\geq0)$ is a three-dimensional Bessel process.

We also state another functional central limit theorem related to \eqref{eq:cvtobessel}, which has been proved by Iglehart~\cite{Igl74}, Bolthausen~\cite{Bol76} and Doney~\cite{Don85}.
\begin{equation}\label{eq:cvtomeander}
\forall F\in\calC_b(\calD),\quad \E\Big(F\left(\tfrac{S_{\floor{nt}}}{\sqrt{\sigma^2n}}; t\in[0,1]\right)\Big\vert \mS_n\geq0\Big)\xrightarrow[n\rightarrow\infty]{} \E\Big(F(\calM)\Big),
\end{equation}
where $\calM=(\calM(t); t\in[0,1])$ is a Brownian meander process. The following equality from Imhof~\cite{Imh84} reveals the relation between these two limit processes. For any $t\in(0,1]$, 
\begin{equation}\label{eq:meandertobessel}
\forall \Phi\in\calC_b(\calD[0,t]), \quad\E\left[ \Phi(\calM(u), u \leq t)\right] = \sqrt{\frac{\pi}{2}} \E\left[ \frac{\sqrt{t}}{R(t)}\Phi(R(u), u \leq t) \right].
\end{equation}

\paragraph{Decomposition of the excursions.}
We write $\rho^t_{x,y}=(\rho^t_{x,y}(s), s\in[0,t])$ for a 3-dimensional Bessel bridge of length $t \in [0,1]$ between $x$ and $y$, where $x,y \in \R_+$. Intuitively, 
\begin{equation}
  \forall F\in\calC_b(\calD),\quad\E\Big(F(\rho^t_{x,y})\Big)=\E_x\Big(F(R(s), s\in[0,t])\Big\vert R(t)=y\Big).
\end{equation}
For all $\lambda \in (0,1)$, $G_1 \in \calC(\calD([0,\lambda]))$, $G_2 \in \calC(\calD([0,1-\lambda]))$ and $x \in \calD$ we set
\[
  G_1 \star G_2(x) = G_1(x_s, s \leq \lambda) G_2(x_{s+\lambda},s \leq 1 - \lambda). 
\]

\begin{lemma}
\label{lem:gamma}
Let $\epsilon = (\epsilon_t, t \in [0,1])$ be a normalized Brownian excursion. We write $(\mathcal{M}_t, t \in [0,\lambda])$ and $(\rho^{1-\lambda}_{x,0}(t), x \in \R^+,t \in [0,1-\lambda]$ two independent processes, with $\mathcal{M}$ a Brownian meander of length $\lambda$ and $\rho^{1-\lambda}_{x,0}$ a Bessel bridge between $x$ and $0$ of length $1-\lambda$. We have
\begin{equation}\label{eq:gamma}
 \E\left[ G_1 \star G_2(\epsilon)\right]=\sqrt{\frac{2}{\pi}} 
 \frac{1}{\lambda^{1/2}(1-\lambda)^{3/2}} \E\left[ \mathcal{M}(\lambda) e^{-\frac{\mathcal{M}(\lambda)^2}{2(1-\lambda)}} G_1\left(\mathcal{M}\right) G_2\left(\rho^{1-\lambda}_{\mathcal{M}(\lambda),0}\right)\right].
\end{equation}
\end{lemma}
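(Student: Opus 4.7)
The plan is to apply the Markov property of the Brownian excursion at time $\lambda$, and then convert the left piece -- which by this decomposition is a $3$-dimensional Bessel bridge -- into a Brownian meander via Imhof's identity \eqref{eq:meandertobessel}. Since the normalised Brownian excursion $e$ has the law of a $3$d Bessel bridge from $0$ to $0$ of length $1$, the Markov property ensures that, conditionally on $e(\lambda)=y$, the restrictions $(e(t))_{t\in[0,\lambda]}$ and $(e(\lambda+s))_{s\in[0,1-\lambda]}$ are independent, distributed respectively as $\rho^\lambda_{0,y}$ and $\rho^{1-\lambda}_{y,0}$. Calling $q_\lambda$ the density of $e(\lambda)$, this yields
$$\E[G_1\star G_2(e)] = \int_0^\infty q_\lambda(y)\,\E[G_1(\rho^\lambda_{0,y})]\,\E[G_2(\rho^{1-\lambda}_{y,0})]\,dy.$$

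The next step is to derive the two densities that drive the final constant. Using the Bessel transition density from $0$, namely $p^R_s(0,x)=\sqrt{2/\pi}\,x^2 s^{-3/2} e^{-x^2/(2s)}$, together with the bridge-density formula in the limit where both endpoints tend to $0$, a direct computation gives
$$q_\lambda(y) = \sqrt{\tfrac{2}{\pi}}\,\frac{y^2}{(\lambda(1-\lambda))^{3/2}}\,e^{-y^2/(2\lambda(1-\lambda))}.$$
For the meander, I would specialise \eqref{eq:meandertobessel} at $t=1$ to functionals of the form $\Phi(X)=G(X)h(X(1))$ and apply Bayes' rule to obtain the classical equivalence: a standard Brownian meander conditioned on its endpoint $y$ is distributed exactly as $\rho^1_{0,y}$. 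Rescaling by $\sqrt{\lambda}$ transfers this to length $\lambda$; the endpoint $\mathcal{M}(\lambda)$ then has the Rayleigh-type density $f^M_\lambda(y)=(y/\lambda)e^{-y^2/(2\lambda)}$.

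To assemble, I substitute $\E[G_1(\rho^\lambda_{0,y})]=\E[G_1(\mathcal{M})\mid \mathcal{M}(\lambda)=y]$ into the integral and un-condition on the endpoint, with $\mathcal{M}$ and $\rho$ independent, to obtain
$$\E[G_1\star G_2(e)] = \E\!\left[\frac{q_\lambda(\mathcal{M}(\lambda))}{f^M_\lambda(\mathcal{M}(\lambda))}\,G_1(\mathcal{M})\,G_2(\rho^{1-\lambda}_{\mathcal{M}(\lambda),0})\right].$$
The cancellation $\tfrac{1}{2\lambda}-\tfrac{1}{2\lambda(1-\lambda)}=-\tfrac{1}{2(1-\lambda)}$ then simplifies the ratio to
$$\frac{q_\lambda(y)}{f^M_\lambda(y)} = \sqrt{\tfrac{2}{\pi}}\,\frac{y}{\lambda^{1/2}(1-\lambda)^{3/2}}\,e^{-y^2/(2(1-\lambda))},$$
which is precisely the weight appearing in \eqref{eq:gamma}. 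The proof is essentially bookkeeping: each ingredient (the Markov property of the Bessel bridge, the explicit marginal density of the excursion, and the meander/Bessel-bridge equivalence via Imhof) is classical, and the only real care required is tracking the normalisation constants and the cancellation of the Gaussian exponents in the final ratio; no serious obstacle is expected.
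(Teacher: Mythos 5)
Your proof is correct and follows essentially the same route as the paper's: decompose the excursion at time $\lambda$ via the Markov property into two Bessel bridges, use the marginal density of $e(\lambda)$, and invoke Imhof's identity \eqref{eq:meandertobessel} to trade the left bridge for a Brownian meander. The only difference is presentational: you convert $\rho^\lambda_{0,y}$ directly into a conditioned meander and track the Radon--Nikodym factor $q_\lambda/f^M_\lambda$, whereas the paper shows both sides of \eqref{eq:gamma} separately reduce to the common integral $\sqrt{2/\pi}\int_0^\infty \tfrac{x^2}{\lambda^{3/2}(1-\lambda)^{3/2}}\,e^{-x^2/(2\lambda(1-\lambda))}\,\E\bigl[G_1(\rho^\lambda_{0,x})\bigr]\E\bigl[G_2(\rho^{1-\lambda}_{x,0})\bigr]\,dx$.
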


\begin{proof}
We show that both sides in \eqref{eq:gamma} are equal to
\begin{equation}
  \sqrt{\frac{2}{\pi}}\int_0^\infty \frac{x^2}{\lambda^{3/2}(1-\lambda)^{3/2}}e^{-\frac{x^2}{2\lambda(1-\lambda)}} \E\left[G_1\left(\rho^\lambda_{0,x}\right)\right]\E\left[G_2\left(\rho^{1-\lambda}_{x,0}\right)\right]dx.
\end{equation}

Recall that $\epsilon$ has the same law as $\rho^1_{0,0}$ a 3-dimensional Bessel bridge of length 1. Conditioning on the value of $\rho^1_{0,0}(\lambda)$, we have
\begin{multline*}
  \E\left[ G_1 \star G_2(\epsilon) \right]
  = \E\left[ G_1 \left(\rho^1_{0,0}(s), s \leq \lambda\right) G_2\left( \rho^1_{0,0}(s), \lambda \leq s \leq 1 \right) \right]\\
  = \int_0^\infty \P\left(\rho^1_{0,0}(\lambda)\in dx\right)\E\left[ G_1 \left(\rho^1_{0,0}(s), s \leq \lambda\right) G_2\left( \rho^1_{0,0}(s), \lambda \leq s \leq 1 \right) \Big\vert \rho^1_{0,0}(\lambda)=x\right],
\end{multline*}
where the density of $\rho^1_{0,0}(\lambda)$ is $\P\left(\rho^1_{0,0}(\lambda)\in dx\right)=\sqrt{\frac{2}{\pi}}\frac{1}{\lambda^{3/2} (1-\lambda)^{3/2}}x^2e^{-\frac{x^2}{2\lambda(1-\lambda)}}1_{x\geq0}dx$. It hence follows that 
\begin{align*}
 & \E\left[ G_1 \star G_2(\epsilon) \right]\\
=&  \sqrt{\frac{2}{\pi}} \int_0^{\infty} dx\frac{x^2}{\lambda^{3/2}(1-\lambda)^{3/2}} e^{- \frac{x^2}{2\lambda(1-\lambda)}} \E\left[ G_1 \left(\rho^1_{0,0}(s), s \leq \lambda\right) G_2\left( \rho^1_{0,0}(s), \lambda \leq s \leq 1 \right) \Big\vert \rho^1_{0,0}(\lambda)=x\right].
\end{align*}
Applying the Markov property at time $\lambda$ yields
\begin{align*}
 &\E\left[ G_1 \left(\rho^1_{0,0}(s), s \leq \lambda\right) G_2\left( \rho^1_{0,0}(s), \lambda \leq s \leq 1 \right) \Big\vert \rho^1_{0,0}(\lambda)=x\right]\\
 &= \E\left[ G_1 \left(\rho^1_{0,0}(s), s \leq \lambda\right)\Big\vert \rho^1_{0,0}(\lambda)=x\right]\E\left[G_2\left(\rho^{1-\lambda}_{x,0}\right)\right]\\
 &= \E\left[G_1\left(\rho^\lambda_{0,x}\right)\right] \E\left[G_2\left(\rho^{1-\lambda}_{x,0}\right)\right].
\end{align*}
As a consequence
\begin{equation*}
  \E\left[ G_1 \star G_2(\epsilon)\right]
  =\sqrt{\frac{2}{\pi}}\int_0^\infty \frac{x^2}{\lambda^{3/2} (1-\lambda)^{3/2}}e^{-\frac{x^2}{2\lambda(1-\lambda)}}\E\left[G_1\left(\rho^\lambda_{0,x}\right)\right] \E\left[G_2\left(\rho^{1-\lambda}_{x,0}\right)\right]dx.
\end{equation*}

On the other hand, writing
\[ \Gamma(G_1,G_2,\lambda) = \sqrt{\frac{2}{\pi}} \frac{1}{\lambda^{1/2}(1-\lambda)^{3/2}} \E\left[ \mathcal{M}(\lambda) e^{-\frac{\mathcal{M}(\lambda)^2}{2(1-\lambda)}} G_1\left(\mathcal{M}\right) G_2\left(\rho^{1-\lambda}_{\mathcal{M}(\lambda),0}\right)\right], \]
by \eqref{eq:meandertobessel} we have
\begin{align*}
  \Gamma(G_1, G_2,\lambda)
  &= \sqrt{\frac{2}{\pi}}\frac{1}{\lambda^{1/2}(1-\lambda)^{3/2}} \E\left[ G_1\left(\mathcal{M}\right) 
    \mathcal{M}(\lambda) e^{-\frac{\mathcal{M}(\lambda)^2}{2(1-\lambda)}} 
    G_2\left(\rho^{1-\lambda}_{\mathcal{M}(\lambda),0}\right)\right]\\
  &= \frac{1}{(1-\lambda)^{3/2}}  \E\left[G_1\left( R(s); s\in[0,\lambda]\right) e^{-\frac{R(\lambda)^2}{2(1-\lambda)}} G_2\left(\rho^{1-\lambda}_{R(\lambda),0}\right)\right],
\end{align*}
where $(R(s);0\leq s\leq \lambda)$ is a Bessel process independent with $(\rho^{1-\lambda}_{x,0})$. We now condition on the value of $R(\lambda)$ --recall that the law of $R(\lambda)$ is $\P(R(\lambda)\in dx)= \sqrt{\frac{2}{\pi \lambda^3}}x^2 e^{-x^2/(2\lambda)}1_{x\geq0}dx$-- to obtain
\begin{align*}
  &\Gamma(G_1, G_2,\lambda)\\
  =&\frac{1}{(1-\lambda)^{3/2}} \int_0^\infty \P\Big(R(\lambda)\in dx\Big) \E\left[G_1\left( R(s); s\in[0,\lambda]\right) e^{-\frac{R(\lambda)^2}{2(1-\lambda)}} \E\left[G_2\left(\rho^{1-\lambda}_{R(\lambda),0}\right)\right]\Big\vert R(\lambda)=x\right]\\
  =&\sqrt{\frac{2}{\pi}}\int_0^\infty \frac{x^2}{\lambda^{3/2} (1-\lambda)^{3/2}}e^{-\frac{x^2}{2\lambda(1-\lambda)}}\E\left[G_1\left(\rho^\lambda_{0,x}\right)\right] \E\left[G_2\left(\rho^{1-\lambda}_{x,0}\right)\right]dx.
\end{align*}
We conclude that $
 \E\left[ G_1 \star G_2(\epsilon)\right] = \Gamma(G_1, G_2,\lambda)$.
\end{proof}

\subsubsection{Asymptotic independence of the endpoint and the shape of the trajectory in a random walk excursion}

For $n \in \N$, let $\ns^{(n)}$ be the normalized path of the random walk $S$, defined, for $t \in [0,1]$ by
\begin{equation}
  \ns^{(n)}_t:=\frac{S_\floor{nt}}{\sqrt{\sigma^2 n}},
\end{equation}
also written $\ns$ when the value of $n$ is unambiguous. Clearly, $(\ns^{(n)}_t, t\in[0,1])\in \calD$. We prove in this section that conditionally on $\{\mS_{n} \geq 0\}$ and $\{S_n = o(\sqrt{n})\}$, the endpoint $S_n$ is asymptotically independent of rescaled shape $\ns$ of the excursion.

We begin with the following estimate, for a random walk starting at time $0$ within distance~$\sqrt{n}$ from the boundary.
\begin{lemma}
\label{lem:besselpont}
Let $(y_n)_{n \geq 1}$ be a non-negative sequence such that $\lim_{n \to \infty} \frac{y_n}{n^{1/2}}=0$. There exists $C_\star =\frac{C_+}{\sigma}$ (with $C_+$ the constant defined in \eqref{eq:probstaypositive}) such that for all $K \in \R^+$ and $F\in\calC_b(\calD)$, we have
\begin{multline}
  \lim_{n \to \infty} \sup_{x \in \r+} \bigg| n\E_{x \sigma \sqrt{n}} \left[ F(\ns^{(n)}) ; \mS_n \geq y_n, S_n \in [y_n,y_n+K] \right] \\ - C_\star g(x) \E\left(F(\rho^1_{x,0})\right) \int_0^K V^+(z) dz\bigg| = 0,
\end{multline}
where $g : x \mapsto xe^{-\frac{x^2}{2}}\ind{x \geq 0}$ and $\rho^1_{x,0}$ is a three-dimensional Bessel bridge of length 1 from $x$ to $0$.
\end{lemma}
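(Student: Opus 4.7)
My plan is to split the trajectory at an intermediate time $m := \lfloor \lambda n\rfloor$ for some fixed $\lambda \in (0, 1)$, combining Donsker-type asymptotics on the first half $[0, m]$ with Doney's local limit theorem on the second half $[m, n]$. As a preliminary reduction, by density in $\calC_b^u(\calD)$ it suffices to treat $F$ of product form $F = F_1 \star F_2$ with $F_1 \in \calC_b(\calD([0,\lambda]))$ and $F_2 \in \calC_b(\calD([\lambda, 1]))$; the general case then follows from uniform continuity of $F$ together with tightness of the Bessel-bridge limit.

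Applying the Markov property at time $m$, the quantity of interest becomes
\begin{equation*}
n \E_{x\sigma\sqrt{n}}\!\left[F_1(\ns^{(n)}_{\cdot\leq\lambda})\,\ind{\mS_m\geq y_n}\, \Phi_n(S_m)\right], \quad \Phi_n(z) := \E_z[F_2(\cdots); \mS_{n-m}\geq y_n, S_{n-m}\in [y_n, y_n+K]].
\end{equation*}
For the first factor, since $x\sigma\sqrt{n}\to\infty$ and $y_n = o(\sqrt{n})$, Donsker's theorem applied jointly with the positivity indicator shows that the rescaled path $(\ns^{(n)}_t, t \leq \lambda)$ converges in law, on the event $\{\mS_m \geq y_n\}$, to a Brownian motion $B$ of variance $\sigma^2$ started at $x\sigma$ restricted to $\{\min_{t\leq\lambda} B_t \geq 0\}$; the reflection principle gives the limiting density $\frac{1}{\sqrt{2\pi\lambda}}(e^{-(\zeta_0-x)^2/(2\lambda)} - e^{-(\zeta_0+x)^2/(2\lambda)})$ at the endpoint $\zeta_0$, and conditional on this endpoint the shape is a Bessel bridge $\rho^\lambda_{x,\zeta_0}$. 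For the second factor, translating by $y_n$ and time-reversing the walk on $[m, n]$ converts $\Phi_n(z)$ into a problem covered by \eqref{eq:doney+}: the reversed walk (with increments $-\xi_i$) starts at a point in $[0, K]$ and ends near $z - y_n$ of order $\sqrt{n}$. Combining \eqref{eq:doney+} (with the roles of $V^\pm$ and $C_\pm$ swapped by the reversal) with the invariance principle \eqref{eq:cvtomeander} for the reversed walk and Imhof's identity \eqref{eq:meandertobessel} upgrading the Brownian meander to a Bessel process, one obtains for $z \approx \zeta_0\sigma\sqrt{n}$ with $\zeta_0 > 0$,
\begin{equation*}
n\Phi_n(z) \longrightarrow \frac{C_+ \zeta_0}{\sigma(1-\lambda)^{3/2}}\, e^{-\zeta_0^2/(2(1-\lambda))}\, \E[F_2(\rho^{1-\lambda}_{\zeta_0, 0})] \int_0^K V^+(w)\, dw.
\end{equation*}

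Integrating the two halves against the reflection density over $\zeta_0 > 0$, the quantity under study converges to
\begin{equation*}
\int_0^\infty \frac{C_+ \zeta_0 \bigl(e^{-(\zeta_0-x)^2/(2\lambda)} - e^{-(\zeta_0+x)^2/(2\lambda)}\bigr) e^{-\zeta_0^2/(2(1-\lambda))}}{\sigma\sqrt{2\pi\lambda}(1-\lambda)^{3/2}}\, \E[F_1(\rho^\lambda_{x, \zeta_0}) F_2(\rho^{1-\lambda}_{\zeta_0, 0})]\, d\zeta_0 \cdot \int_0^K V^+(w)\, dw.
\end{equation*}
Factoring out $\frac{C_+}{\sigma}\, x e^{-x^2/2}$ and using the explicit marginal density of the 3-dim Bessel bridge from $x$ to $0$ at time $\lambda$,
\begin{equation*}
\P(\rho^1_{x, 0}(\lambda) \in d\zeta_0) = \frac{\zeta_0\, e^{x^2/2}}{x\sqrt{2\pi\lambda}(1-\lambda)^{3/2}}\bigl(e^{-(\zeta_0-x)^2/(2\lambda)} - e^{-(\zeta_0+x)^2/(2\lambda)}\bigr) e^{-\zeta_0^2/(2(1-\lambda))}\, d\zeta_0,
\end{equation*}
together with the Markov property of the Bessel bridge (which makes $\rho^1_{x,0}$ an independent concatenation of $\rho^\lambda_{x,\zeta_0}$ and $\rho^{1-\lambda}_{\zeta_0, 0}$ conditional on $\rho^1_{x,0}(\lambda) = \zeta_0$), the integral collapses to $\frac{C_+}{\sigma}\, x e^{-x^2/2}\, \E[F(\rho^1_{x, 0})] = C_\star\, g(x)\, \E[F(\rho^1_{x, 0})]$, yielding the claimed limit.

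The main obstacle is promoting these convergences to be \emph{uniform in $x \in \R_+$} (as the $\sup$ requires) and uniform in the intermediate variable $\zeta_0$ (to justify exchanging limit and integration). For large $x$ the Gaussian factor $e^{-x^2/2}$ provides decay on the limit side, and on the finite-$n$ side one matches it using a uniform Gaussian tail estimate together with the ballot-type bound \eqref{eq:elementary} to dominate the probability that a walk from $x\sigma\sqrt{n}$ reaches $[y_n, y_n + K]$ in $n$ steps while staying above $y_n$. A secondary technical point is the careful bookkeeping of $V^+, V^-$ and $C_+, C_-$ when invoking \eqref{eq:doney+} after time-reversal, since the reversed walk swaps its ladder processes with those of $S$, which is why $C_+$ and $V^+$ (not $C_-, V^-$) emerge in the final answer.
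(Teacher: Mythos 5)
Your overall plan---split at $m=\lfloor\lambda n\rfloor$, apply Donsker with the reflection principle on the first half, apply a local limit theorem after time-reversal on the second half, then integrate over the midpoint $\zeta_0$ against the Bessel-bridge marginal density---is a genuinely different decomposition from the paper's. The paper instead factors the quantity into a \emph{conditional shape} term $\E_{x\sigma\sqrt n - y_n}[F(\ns)\,\vert\,\mS_n\geq 0, S_n\in[0,K]]$ times an \emph{endpoint local limit} $\pi_n^+(x\sigma\sqrt n - y_n,[0,K])$, proving a uniform-in-$x$ version of the latter (their \eqref{eq:doneyO}) via a decomposition of the time-reversed walk at the first time it attains its minimum, and handling the conditional shape by the $\P^\uparrow$ change of measure \eqref{eq:Pup} together with the invariance principle \eqref{eq:cvtobessel}. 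Your final computation identifying the $\zeta_0$-integral with $C_\star g(x)\E[F(\rho^1_{x,0})]$ via the explicit Bessel-bridge marginal density is correct and is essentially the content of the paper's Lemma \ref{lem:gamma}; and your anticipation of the $V^\pm$, $C_\pm$ swap under time-reversal is right.

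However, there is a real gap. Your second-half estimate, i.e.\ the claimed asymptotics for $n\Phi_n(z)$ when $z\asymp\zeta_0\sigma\sqrt n$, is structurally the same object as the lemma you are trying to prove: a walk at distance $O(\sqrt n)$, confined above $y_n$, landing in a window of width $K$, with a functional of the normalized path. Time-reversal turns it into a walk started in $[0,K]$ staying positive and ending at $O(\sqrt n)$, and you propose to conclude from \eqref{eq:doney+}, \eqref{eq:cvtomeander} and Imhof's identity \eqref{eq:meandertobessel}. But \eqref{eq:cvtomeander} gives convergence conditionally on $\{\mS_n\geq 0\}$ alone; upgrading it to convergence \emph{conditionally on the endpoint as well}, uniformly in the endpoint variable $\zeta_0$ over $[0,\infty)$ and in the starting point over $[0,K]$, is exactly the hard part and is not supplied by those three references. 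This is precisely what the paper's arguments---the decomposition at the minimum to obtain the uniform LLT \eqref{eq:doneyO}, and the $\P^\uparrow$ rewrite $\E^{\uparrow}_{x_n}[F(\ns)f^n_{\varepsilon,x_n}(S_M/\sigma\sqrt n)]$ together with a uniform comparison of $f^n_{\varepsilon,x_n}$ with its Brownian limit---are designed to deliver. Splitting at $\lambda n$ therefore does not reduce the difficulty; it reproduces the original statement over a shorter horizon. In addition, your first-half argument by Donsker plus reflection degenerates as $x\to 0$: the killed-Brownian-motion limit from $x$ collapses, whereas the lemma's supremum is over all $x\in\R_+$ and the target $g(x)$ vanishes linearly near $0$, so uniformity near $x=0$ has to be argued separately. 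Your proposed remedy (Gaussian decay plus the ballot bound \eqref{eq:elementary}) only controls the large-$x$ regime. The paper avoids this by proving \eqref{eq:doneyO} uniformly over all starting points and then restricting the conditional-shape analysis to $x\in[0,K_0]$.

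A smaller point: products $F_1\star F_2$ are not dense in $\calC_b^u(\calD)$ in sup norm, so ``by density'' should be replaced by a finite-dimensional-distributions-plus-tightness argument (or a monotone-class argument, as the paper does for Lemma \ref{lem:rw}). The idea is standard, but the phrasing as stated is not quite right.
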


\begin{proof}
The proof of this lemma is largely inspired by the arguments in~\cite{CCh13}. By to Lemma~A.1 of \cite{Pain}, it is enough to prove this convergence for any $F\in\calC_b^u(\calD)$.

Let $n \in \N$ and $F$ uniformly continuous, we have, in terms  of the local measure
\begin{multline}\label{eq:besselbridgecond}
  \E_{x\sigma\sqrt{n}} \left( F(\ns) ; \mS_n \geq y_n, S_n \in [y_n,y_n+K] \right)\\
  =\left\{ \E_{x\sigma \sqrt{n} -y_n} \left( \left. F(\ns)\right| \mS_n \geq 0, S_n \in [0,K] \right) + o_n(1) \right\} \pi_n^+ (x\sigma \sqrt{n}-y_n, [0,K]).
\end{multline}

Recall that \eqref{eq:doneyo} and \eqref{eq:doney+} give estimates of $\pi_n^+\left( x, [y,y+K]\right)$ when $x=o(\sqrt{n})$. We first extend this result by showing there exists $C_\star>0$ such that uniformly in $x \in [0,\infty)$, 
\begin{equation}
  \label{eq:doneyO}
  \pi_n^+\left( x, [0,K]\right) = \frac{C_\star}{n} \int_0^K V^+(z)dz g\left( \frac{x}{\sigma \sqrt{n}}\right) + o\Big(\frac{1}{n}\Big).
\end{equation}
Let $n \in \N$, we write $S^-_k = S_{n-k}-S_n$ for $0\leq k\leq n$, the ``time-reversal random walk'', which has the same law as $-S$. We observe that
\begin{align*}
  \pi_n^+\left(x, [0,K]\right)
  =& \P_{x}\left(\mS_n \geq 0, S_n \in [0,K]\right)
  = \P\left(\min_{0\leq k\leq n} S^-_k\geq S^-_n-x \geq-K\right)\\
  =&\sum_{j=0}^n\P\left(T=j, \min_{0\leq k\leq n} S^-_k\geq S^-_n-x\geq-K\right),
\end{align*}
where $T:=\min\{j\leq n: S_j^-=\min_{0\leq k\leq n}S^-_k\}$. Applying the Markov property at time $T=j$ yields that
\begin{equation}
  \label{eq:sum}
  \pi_n^+\left(x, [0,K]\right)=\sum_{j=0}^n \E\left(\ind{-K\leq S_j^-<\min_{0\leq k\leq j-1}S^-_k}\pi_{n-j}^-(0, [x-K-S_j^-, x])\right) = \sum_{j=0}^n \tau_j,
\end{equation}
where $\tau_j:=\E\left(\ind{-K\leq S_j^-<\min_{0\leq k\leq j-1}S^-_k}\pi_{n-j}^-(0, [x-K-S_j^-, x])\right)$ for all $0 \leq j \leq n$. Applying \eqref{eq:doney+} to $\pi^-$, uniformly in $y \geq 0$, we have
\begin{equation}\label{eq:upperF-}
  \pi_{n}^-(0, [y, y+K])=\frac{C_+}{\sigma n} \left(g(\tfrac{y}{\sigma\sqrt{n}})K+o_n(1)\right).
\end{equation}
Therefore,
\begin{align}
  \sum_{0\leq j\leq\sqrt{n}} \tau_j &= \frac{C_+}{\sigma n} \left(g(\tfrac{x}{\sigma\sqrt{n}})+o_n(1)\right) \sum_{0\leq j\leq\sqrt{n}} \E\left[\ind{-K\leq S_j^-<\min_{0\leq k\leq j-1}S^-_k} (K+S_j^-)\right] \nonumber\\
  &= \frac{C_+}{\sigma n}\left( g(\tfrac{x}{\sigma \sqrt{n}}) + o_n(1)\right) \left(\int_0^K V^+(z)dz + o_n(1)\right). \label{eq:F+O}
\end{align}
Using \eqref{eq:upperF-}, we observe there exists $c_2>0$ such that for all $n \in \N$ and $y \geq 0$, $\pi_n^-(0, [y, y+K])\leq \frac{c_2(1+K)}{n+1}$, which implies
\begin{align*}
  \sum_{\sqrt{n}<j\leq n} \tau_j
  &\leq \sum_{\sqrt{n}<j\leq n} \frac{c_2}{n-j+1} \E\left[\ind{-K\leq S_j^-<\min_{0\leq k\leq j-1}S^-_k}(1+K+S_j^-)\right]\\
  &\leq \sum_{\sqrt{n}<j\leq n} \frac{c_2(1+K)}{n-j+1}\P(\mS_j\geq0, S_j\leq K)&&\text{by time-reversal}\\
  &\leq \sum_{\sqrt{n}<j\leq n} \frac{c_3(1+K)^2}{(n-j+1)j^{3/2}} &&\text{using \eqref{eq:elementary}},
\end{align*}
so $\sum_{\sqrt{n}<j\leq n} \tau_j = o(n^{-1})$. As a consequence, for $C_\star = \frac{C_+}{\sigma}$, uniformly in $x \geq 0$, \eqref{eq:sum} becomes
\begin{equation}
  \pi_n^+\left(x, [0,K]\right) = \frac{C_\star}{n}g(\tfrac{x}{\sigma \sqrt{n}})\int_0^K V^+(z)dz + o(n^{-1}).
\end{equation}

Plugging this result into \eqref{eq:besselbridgecond}, we obtain that, uniformly in $x \geq 0$
\begin{multline*}
  \E_{x\sigma\sqrt{n}}\left(F(\ns); \mS_n\geq y_n, S_n\in[y_n, y_n+K]\right)\\
  = \frac{C_\star g(x)}{n} \int_0^K V^+(z) dz \E_{(x\sigma\sqrt{n}-y_n)}\left[F(\ns)\left\vert \mS_n\geq 0, S_n\in[0, K]\right. \right]+o(n^{-1}).
\end{multline*}
As $\lim_{x \to \infty} g(x)= 0$, it remains to prove that for any $K_0>0$ fixed,
\begin{equation}
  \label{eq:bridge}
  \lim_{n \to \infty} \sup_{x \in [0,K_0]}\left|\E_{(x\sigma\sqrt{n}-y_n)}\left[F(\ns)\left\vert \mS_n\geq 0, S_n\in[0, K]\right. \right] - \E(F(\rho^{x,0})) \right| = 0.
\end{equation}

Let $K_0>0$ and $\varepsilon>0$, we prove that \eqref{eq:bridge} holds for any $F\in \mathcal{C}_b(\mathcal{D}([0,1-\varepsilon]))$.

Let $M:=\floor{(1-\varepsilon) n}$. For any $x\geq0$, applying the Markov property at time $M$ gives
\begin{align*}
  \E_x\left(F(\ns)\left\vert \mS_n\geq0, S_n\in[0, K]\right.\right)
  &= \frac{\E_x\left[F(\ns);\mS_n\geq0, S_n\in[0, K]\right]}{\P_x\left[\mS_n\geq0, S_n\in[0, K]\right]}\\
  &=\frac{\E_x\left[F(\ns)\ind{\mS_M\geq0} \P_{S_{M}}\left(\mS_{n-M}\geq0, S_{n-M}\in[0, K]\right)\right]}{\pi_n^+(x, [0,K])}\\
  &=\frac{\E_x\left[F(\ns)\ind{\mS_M\geq0}  \pi_{n-M}^+(S_M, [0,K])\right]}{\pi_n^+(x, [0,K])}.
\end{align*}
We set $x_n:=x\sigma\sqrt{n}-y_n$. By change of measures introduced in \eqref{eq:Pup}, we observe that
\begin{align*}
  \E_{x_n} \left[ F(\ns) \left\vert \mS_n\geq 0, S_n\in [0, K]\right.\right]
  =&\frac{\E^{\uparrow}_{x_n}\left[F(\ns) \frac{V^-(x_n)}{V^-(S_{M})} \pi_{n-M}^+(S_M, [0,K])\right]}{\pi_n^+(x_n, [0,K])}\\
  =& \E^{\uparrow}_{x_n} \left[F(\ns) f_{\varepsilon,x_n}^{n}(\tfrac{S_M}{\sigma\sqrt{n}})\right],
\end{align*}
where we write (recalling that $M=\floor{n(1-\varepsilon)}$)
\begin{equation}
  f_{\varepsilon,x_n}^{n}(z):=\frac{V^-(x_n)}{\pi_n^+(x_n,[0,K])}\frac{\pi_{n-M}^+(z\sigma\sqrt{n}, [0,K])}{V^-(z\sigma\sqrt{n})}.
\end{equation}

On the other hand, for a Bessel bridge $\rho^1_{x,0}$, by the Markov property at time $1-\varepsilon$,
\begin{equation}
  \E\left(F(\rho^1_{x,0})\right)=\E_x\left[F\Big(R(s); s\in[0,1-\varepsilon]\Big)f_{\varepsilon,x}\Big(R(1-\varepsilon)\Big)\right],
\end{equation}
where
\begin{equation}
  f_{\varepsilon,x}(z):=\frac{e^{-z^2/(2\varepsilon)}}{\varepsilon^{3/2}e^{-x^2/2}}.
\end{equation}
As a result,
\begin{multline*}
  \left|\E_{x_n}\left[F(\ns)\left\vert\mS_n\geq 0, S_n\in[0, K]\right.\right)- \E\left(F(\rho^1_{x,0})\right)\right|\\
  \leq \left|\E^{\uparrow}_{x_n}\left[ F(\ns) f_{\varepsilon,x_n}^{n}(\tfrac{S_M}{\sigma\sqrt{n}})\right]-\E^{\uparrow}_{x_n}\left(F(\ns) f_{\varepsilon,x}(\tfrac{S_M}{\sigma\sqrt{n}})\right)\right|\\+\left|\E^{\uparrow}_{x_n}\left(F(\ns) f_{\varepsilon,x}(\tfrac{S_M}{\sigma\sqrt{n}})\right)-\E_x\left[F\Big(R(s); s\in[0,1-\varepsilon]\Big)f_{\varepsilon,x}\Big(R(1-\varepsilon)\Big)\right]\right|,
\end{multline*}
which leads to
\begin{multline*}
 \left|\E_{x_n}\left[F(\ns)\left\vert\mS_n\geq 0, S_n\in[0, K]\right.\right)-\E\left(F(\rho^1_{x,0})\right)\right| \\
 \leq \sup_{z\geq 0 , x \in [0,K]}\left| f_{\varepsilon,x_n}^{n}(z)-f_{\varepsilon,x}(z)\right|||F||_\infty\\
 +\left|\E^{\uparrow}_{x_n}\left(F(\ns) f_{\varepsilon,x}(\tfrac{S_M}{\sigma\sqrt{n}})\right)-\E_x\left[F\left(R(s); s\in[0,1-\varepsilon]\right) f_{\varepsilon,x}\left(R(1-\varepsilon)\right)\right]\right|.
\end{multline*}
By use of \eqref{eq:doneyO} and \eqref{eq:doneyo}, we have
\begin{equation}
\lim_{n\rightarrow\infty} \sup_{z\geq 0 , x \in [0,K]}\left| f_{\varepsilon,x_n}^{n}(z)-f_{\varepsilon,x}(z)\right|=0.
\end{equation}
It follows from \eqref{eq:cvtobessel} that
\begin{equation}
  \lim_{n \to \infty} \sup_{x \in [0,K]} \left|\E^{\uparrow}_{x_n}\left(F(\ns) f_{\varepsilon,x}(\frac{S_M}{\sigma\sqrt{n}})\right)-\E_x\left[F\Big(R(s); s\in[0,1-\varepsilon]\Big)f_{\varepsilon,x}\Big(R(1-\varepsilon)\Big)\right]\right| = 0.
\end{equation}

As a result, to complete the proof of Lemma~\ref{lem:besselpont}, it is enough to check the tightness of $\ns$ under $\P_{x_n}\left(\cdot\left| \mS_n\geq0, S_n\in[0, K]\right.\right)$, as the previous equation gives convergence in finite dimensional distributions of this quantity. By Theorem 15.3 of~\cite{Bil99}, for any $\eta>0$, it suffices to say that
\begin{equation}
\lim_{\delta\to 0}\lim_{n\to \infty} \sup_{x \in [0,K]}\P_{x_n}\left(\left.\sup_{0\leq k\leq \delta n}S_{n-k}\geq \eta\sigma\sqrt{n}\right| \mS_n\geq0, S_n\in[0, K]\right)=0,
\end{equation}
which holds immediately by time reversal properties.
\end{proof}

Using Lemma~\ref{lem:besselpont}, we obtain the main result of this section, the joint convergence of this normalized path and the terminal position in a random walk excursion.
\begin{lemma}
\label{lem:rw}
Let $f : \R_+ \to \R$ be a Riemann-integrable function such that there exists a non-increasing function $\widehat{f}$ verifying $|f(x)| \leq \widehat{f}(x)$ and $\int_{\R_+} x \widehat{f}(x) dx < \infty$. Let  $(r_n)$ be a non-negative sequence such that $\limsup_{n \to \infty} \frac{r_n}{\log n}<\infty$. There exists a constant $C_1>0$ such that for all such functions $f$, $\lambda \in (0,1)$ and $F\in\calC_b(\calD)$,
\begin{equation}
  \label{eq:convofrw}
  \lim_{n \to \infty} \sup_{y \in [0,r_n]} \left| n^{3/2} \E\left[ F\left(\ns\right) f(S_n-y) ; \mS_n \geq 0, \mS_{[\lambda n, n]}  \geq y \right] - C_1\E\left[ F(\epsilon) \right] \int_{\r_+} f(x) V^+(x) dx\right| = 0,
\end{equation}
where $\epsilon=(\epsilon_t, t \in [0,1])$ is a standard Brownian excursion.
\end{lemma}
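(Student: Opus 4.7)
The plan is to split the trajectory at time $M := \floor{\lambda n}$ by the Markov property, then to deal with the two halves separately: the initial segment, conditioned on $\{\mS_M \geq 0\}$, converges to a Brownian meander by \eqref{eq:cvtomeander}, while the terminal segment is governed by Lemma \ref{lem:besselpont}. By tightness of the normalized paths under the relevant conditioning plus a density argument in $\calC_b^u(\calD)$, it suffices to prove \eqref{eq:convofrw} for $F$ of the product form $G_1 \star G_2$ with $G_1 \in \calC_b(\calD([0,\lambda]))$ and $G_2 \in \calC_b(\calD([\lambda,1]))$.

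Applying the Markov property at time $M$, the left-hand side of \eqref{eq:convofrw} equals $\E\bigl[ G_1(\ns_{[0,\lambda]})\,\ind{\mS_M \geq 0}\,\Phi_n(S_M) \bigr]$, where $\Phi_n(z) := \E_z\bigl[G_2(\ns_{[\lambda,1]})\,f(S_{n-M}-y);\,\mS_{n-M} \geq y\bigr]$. To handle $\Phi_n$, fix $K > 0$, approximate the restriction $f\ind{[0,K]}$ uniformly by Riemann sums, and apply Lemma \ref{lem:besselpont} to the random walk segment of length $n-M$ started at $z = y + x\sigma\sqrt{n-M}$ (note that $y/\sqrt{n-M} \to 0$ uniformly in $y \in [0,r_n]$ since $r_n = O(\log n)$). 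After a natural rescaling of the Bessel bridge so that it lives on $[0,1-\lambda]$, this yields, uniformly in $x \geq 0$,
\[
  (n-M)\,\Phi_n\bigl(y + x\sigma\sqrt{n-M}\bigr)\;\longrightarrow\; C_\star\, g(x)\, \E\bigl[G_2(\rho^{1-\lambda}_{x,0})\bigr] \int_0^\infty f(u) V^+(u)\, du,
\]
where the contribution of $[K,\infty)$ is controlled using $|f|\leq\widehat f$, the ballot-type bound \eqref{eq:elementary}, and the integrability assumption $\int u\widehat f(u)\,du < \infty$.

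The outer expectation is then treated by \eqref{eq:cvtomeander} together with $\sqrt{M}\,\P(\mS_M \geq 0) \to C_-$ from \eqref{eq:probstaypositive}. Multiplying by $n^{3/2}$, the factor $\sqrt{n}\,\ind{\mS_M \geq 0}$ produces the meander density and the factor $g\bigl(S_M/(\sigma\sqrt{n(1-\lambda)})\bigr)$ provides precisely the Gaussian weight appearing in Lemma \ref{lem:gamma}. That lemma identifies the resulting joint object --- a Brownian meander on $[0,\lambda]$ concatenated with an independent Bessel bridge on $[\lambda,1]$, with the stated density weight --- as the law of the normalized Brownian excursion $e_1$. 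Hence the limit equals $C_1\,\E[F(e_1)]\int_0^\infty f(u)V^+(u)\,du$ with $C_1 = C_- C_\star$.

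The main technical difficulty is the uniformity in $y \in [0,r_n]$: Lemma \ref{lem:besselpont} as stated gives only uniformity in the starting point $x$ for a fixed vanishing sequence $(y_n)$, so one must revisit its proof to extract the uniform-in-$y$ version. Equally delicate is the passage from the pointwise asymptotics of Lemma \ref{lem:besselpont} (which concerns a finite window $[0,K]$) to an integrated statement against $fV^+$ on $\R_+$: this requires the domination $|f|\leq\widehat f$ together with the bound $V^+(u)=O(u)$ and the integrability $\int u\widehat f(u)\,du<\infty$, and a uniform tail estimate on the measure $(n-M)\,\P_z(\mS_{n-M}\geq y,\; S_{n-M}-y\in du)$ coming from \eqref{eq:elementary} and \eqref{eq:doney+}.
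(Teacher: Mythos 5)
Your proposal follows the same route as the paper: split at $M=\floor{\lambda n}$ by the Markov property, treat the terminal segment with Lemma~\ref{lem:besselpont}, treat the initial segment with the conditioned-walk convergence~\eqref{eq:cvtomeander} and~\eqref{eq:probstaypositive}, restrict to product functionals $G_1\star G_2$, truncate $f$ to a compact window via the domination by $\widehat f$ and the ballot bound~\eqref{eq:elementary}, and finally identify the limit with a Brownian excursion via Lemma~\ref{lem:gamma}. One small slip: your value of the constant, $C_1 = C_- C_\star = C_+C_-/\sigma$, drops the normalization factor $\sqrt{\pi/2}$ coming from Lemma~\ref{lem:gamma} (since $\Gamma(G_1,G_2,\lambda)=\sqrt{\pi/2}\,\E[G_1\star G_2(e)]$); the paper gets $C_1 = \frac{C_+C_-}{\sigma}\sqrt{\frac{\pi}{2}}$. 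This does not affect the truth of the lemma, which only asserts existence of some $C_1>0$, and the technical points you flag (uniformity in $y\in[0,r_n]$ in Lemma~\ref{lem:besselpont}, and passing from a finite window to the full integral against $fV^+$) are precisely the ones the paper's proof addresses, if somewhat tersely.
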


\begin{proof}
This lemma is a slight refinement of Lemma 2.4 in~\cite{Che14+}, which proved the above convergence for any function $F$ that depends only on the values of $\ns$ on the interval $[0,\alpha]$ for some $\alpha<1$. Without loss of generality, we assume $0 \leq F \leq 1$. Similarly, up to a decomposition of $f$ in its positive and negative part, we can assume without loss of generality that $f$ is non-negative.
 For convenience, we set
\begin{equation}
  \chi(F,f) := \E\left[ F(\ns) f(S_n-y) ; \mS_n \geq 0, \mS_{[\lambda n,n]} \geq y \right].
\end{equation}

For any $K>0$, writing $f_K(x) = f(x) \ind{x \in [0,K]}$, we observe that
\begin{equation*}
  \chi(F,f)=\chi\left(F, f_K \right)+\chi\left(F,f-f_K\right).
\end{equation*}
As $0\leq F\leq 1$, we have $\chi\left(F,f-f_K\right) \leq  \chi\left(1,f-f_K\right)$ (using the fact that $f \geq f_K$), and
\begin{align*}
  \chi\left(1,f-f_K\right)
  &\leq \sum_{j=K}^{\infty} \E\left(f(S_n-y); \underline{S}_n\geq 0, \underline{S}_{[\lambda n, n]}\geq y, S_n\in[y+j, y+j+1]\right)\\
  &\leq \sum_{j=K}^{\infty} \widehat{f}(j) \underbrace{\P\left( \underline{S}_n \geq 0, \underline{S}_{[\lambda n,n]} \geq y, S_n \in [y+j,y+j+1]\right)}_{\leq c_1 (2+j)n^{-3/2}},
\end{align*}
by use of \eqref{eq:elementary}. As $\int_0^{\infty} x \widehat{f}(x)dx < \infty$, we have $\lim_{K \to \infty} \sum_{j=K}^{\infty} (2+j) \widehat{f}(j) = 0$.

Therefore, we only need to estimate $\chi(F,f_K)$, and, as $f$ is Riemann-integrable, it is enough to consider functions of the form $\ind{x\in[0,K]}$, for $K \in \R$. We now compute an equivalent of
\[ \chi(F,K) := \chi(F,\ind{[0,K]}) = \E\left[ F(\ns) ; \mS_n \geq 0, \mS_{[\lambda n,n]} \geq y, S_n \leq y+K\right]. \]

Using Lemma A.2 of \cite{Pain}, we only need to prove the uniform convergence for $F = G_1 \star G_2$ where $G_1 \in \calC^u_b(\calD([0,\lambda]))$ and $G_2 \in \calC^u_b(\calD([0,1-\lambda]))$ are two uniformly continuous bounded functions. We prove that uniformly in $y \in [0,r_n]$, we have
\begin{equation}
\label{eq:convofrwg+g}
 \lim_{n \to \infty} \left| n^{3/2} \chi(G_1 \star G_2(\ns), K)- C_1\E\left[ G_1 \star G_2(\epsilon) \right] \int_0^K V^+(x) dx\right| = 0.
\end{equation}
Applying the Markov property at time $m=m_n:=\lfloor \lambda n\rfloor$, we have
\begin{equation}
  \label{BRWmarkovprop}
  \chi\left(G_1\star G_2, K\right)
  =\E\left[G_1\left(\ns_t; t\in[0,\lambda]\right)\Psi_{K,G_2}\left(\tfrac{S_{m}}{\sigma \sqrt{n}}\right); \underline{S}_{m}\geq 0\right],
\end{equation}
where for $x\geq0$,
\[
   \Psi_{K,G_2}(x):=\E_{x\sigma\sqrt{n}}\left[G_2\left(\tfrac{S_{\floor{n(t+\lambda)}-m}}{\sigma\sqrt{n}}; t\in [0,1-\lambda]\right); S_{n-m}\leq y+K, \underline{S}_{n-m}\geq y\right].
\]
Recall that $\rho^t_{x,y}$ is a 3-dimensional Bessel bridge of length $t$ between $x$ and $y$. Using Lemma~\ref{lem:besselpont}, uniformly in $x\geq0$ and $y\in[0,r_n]$, we have
\[
  \Psi_{K,G_2}(x)=\frac{C_\star}{ (1-\lambda)n}\int_0^K V^+(z)dz \psi(x)+o(n^{-1}),
\]
where $\psi(x):=g\left(\tfrac{x}{\sqrt{1-\lambda}}\right) \E\left[G_2\left(\rho^{1-\lambda}_{x,0}\right)\right]$ and $C_\star=\frac{C_+}{\sigma}$. As a consequence, \eqref{BRWmarkovprop} becomes
\begin{align*}
  &\chi\left(G_1\star G_2,K \right)\\
  &\qquad =\frac{C_\star}{ (1-\lambda)n} \int_0^K V^+(z)dz \E\left[G_1\left(\ns_t; t\in[0,\lambda]\right)\psi\left(\tfrac{S_{m}}{\sigma\sqrt{n}}\right); \underline{S}_{m}\geq 0\right]+ o(n^{-1})\P\left(\underline{S}_{m}\geq 0\right)\\
  &\qquad =\frac{C_+C_-}{\sigma(1-\lambda)\sqrt{\lambda} n^{3/2}}\int_0^K V^+(z)dz \E\left[\left.G_1\left(\tfrac{S_{\floor{nt}}}{\sigma\sqrt{n}}; t\in[0,\lambda]\right)\psi\left(\tfrac{S_{m}}{\sigma\sqrt{n}}\right)\right| \underline{S}_{m}\geq 0\right]+o(n^{-3/2}),
\end{align*}
where the last equality is a consequence of \eqref{eq:probstaypositive}.

Using \eqref{eq:cvtomeander}, conditionally on $\{\underline{S}_m=\mS_{\floor{\lambda n}} \geq 0\}$, the normalized random walk $\ns^{( n)}$ converges in law to a Brownian meander of length $\lambda$, written $\calM=(\calM(t), 0 \leq r \leq \lambda)$. Therefore, uniformly in $y\in[0,r_n]$,
\begin{align*}
  \chi\left(G_1\star G_2, K\right)
  &=\frac{C_+C_-}{\sigma(1-\lambda)\sqrt{\lambda}n^{3/2}} \int_0^KV^+(z)dz \E\left[G_1\left(\calM\right)\psi\left(\mathcal{M}(\lambda)\right)\right] +o(n^{-3/2})\\
  &=\frac{C_+C_-}{\sigma n^{3/2}}\int_0^K V^+(z)dz\Gamma(G_1, G_2,\lambda) + o(n^{-3/2}),
\end{align*}
where we write
\[
  \Gamma(G_1, G_2,\lambda)
  = \frac{1}{\lambda^{1/2}(1-\lambda)^{3/2}} \E\left[ G_1\left(\mathcal{M}\right) 
    \mathcal{M}(\lambda) e^{-\frac{\mathcal{M}(\lambda)^2}{2(1-\lambda)}} 
    \E\left[G_2\left(\rho^{1-\lambda}_{\mathcal{M}(\lambda),0}\right)\right]\right],
\]
with $(\mathcal{M}(t);0\leq r\leq \lambda)$ and $({\rho^{1-\lambda}_{x,0}}(t); t\in[0,1], x \in \R_+)$ two independent processes. Applying Lemma~\ref{lem:gamma}, we have, uniformly in $y\in[0,r_n]$,
\begin{equation}
\label{eq:convofrw2}
  \chi(G_1\star G_2;K) = \frac{C_1}{n^{3/2}} \int_0^K V^+(z)dz\times \E\left(G_1 \star G_2(\epsilon)\right) + o(n^{-3/2}),
\end{equation}
where $C_1:=\frac{C_+C_-}{\sigma}\sqrt{\frac{\pi}{2}}$, which leads to (\ref{eq:convofrwg+g}), therefore concluding the proof.
\end{proof}

This lemma can be extended, using standard computations, to the following estimate, which enables to choose the starting point uniformly in $[0,r_n]$.
\begin{lemma}
\label{cor:rw}
Under the hypotheses of Lemma~\ref{lem:rw}, we have
\begin{multline}
  \label{eq:convofrw+a}
  \lim_{n \to \infty} \sup_{a, y \in [0,r_n]} \bigg| n^{3/2} \E\left[ F\left(\ns\right) f(S_n-y) ; \mS_n \geq -a , \mS_{[\lambda n, n]}  \geq y \right] \\
  - C_1 V^-(a) \E\left[ F(\epsilon) \right] \int_{\r_+} f(x) V^+(x) dx\bigg| = 0.
\end{multline}
\end{lemma}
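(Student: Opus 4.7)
The only new feature compared to Lemma \ref{lem:rw} is the varying lower bound $-a$ in the event $\{\mS_n\ge -a\}$, which produces the factor $V^-(a)$ in the limit. The strategy is to reduce to the case $a=0$ by translation, then extract $V^-(a)$ via a short-time Markov step that exploits the invariance \eqref{eq:invariant} of the renewal function $V^-$.

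\emph{Step 1 (Translation).} The shift $\tilde{S}_k = S_k + a$ rewrites the LHS of \eqref{eq:convofrw+a} as
\begin{equation*}
\E_a\!\left[F\!\left(\ns - \tfrac{a}{\sqrt{\sigma^2 n}}\right) f(S_n - y - a);\, \mS_n \geq 0,\, \mS_{[\lambda n, n]} \geq y + a\right].
\end{equation*}
Since $a\le r_n = O(\log n)$, the shift $a/\sqrt{\sigma^2 n} = o(1)$ uniformly, and uniform continuity of $F$ lets us replace $F(\ns - a/\sqrt{\sigma^2 n})$ by $F(\ns)$ with a uniform $o(1)$ error. Note that $y+a \in [0,2r_n]$ remains in the admissible range.

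\emph{Step 2 (Short-time Markov and invariance of $V^-$).} Pick $m_n$ with $m_n\to\infty$ and $m_n/n\to 0$ (for instance $m_n=\floor{\sqrt n}$) and apply the Markov property at time $m_n$:
\begin{equation*}
\E_a[\cdots] = \E_a\!\left[\ind{\mS_{m_n}\ge 0}\, G_n(S_{m_n})\right],
\end{equation*}
where $G_n(z)$ is the expectation over the remaining $n-m_n$ steps, started at $z$. Because $m_n/n\to 0$, uniform continuity of $F$ combined with the ballot bound \eqref{eq:elementary} (to truncate atypically large $|S_k|$ for $k\le m_n$) allows us to replace the initial segment of $\ns$ by a constant $0$ segment at negligible cost. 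The proof of Lemma \ref{lem:rw} goes through verbatim for any starting point $z=o(\sqrt n)$---the sole modification is that $\P_z(\mS_m\ge 0)\sim C_- V^-(z)/\sqrt m$ replaces $\P(\mS_m\ge 0)\sim C_-/\sqrt m$---yielding
\begin{equation*}
G_n(z) = \frac{C_1 V^-(z)}{n^{3/2}}\,\E[F(e_1)] \int_{\R_+} f(x) V^+(x)\,dx + o(n^{-3/2}),
\end{equation*}
uniformly for $z\in[0,K\sqrt{m_n}]$ and $y\in[0,r_n]$. Applying \eqref{eq:invariant} at $x=a$ with $N=m_n$ gives $\E_a[V^-(S_{m_n});\,\mS_{m_n}\ge 0] = V^-(a)$, so taking expectation in $z$ produces
\begin{equation*}
n^{3/2}\E_a\!\left[\ind{\mS_{m_n}\ge 0}\, G_n(S_{m_n})\right] \longrightarrow C_1 V^-(a)\,\E[F(e_1)]\int_{\R_+} f(x) V^+(x)\,dx,
\end{equation*}
uniformly in $a,y\in[0,r_n]$, which is the claim.

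\emph{Main obstacle.} The principal technical point is uniformity: the $o(n^{-3/2})$ error in $G_n(z)$ must be uniform in $z$, $y$, and $a$, and the atypical tail $\{S_{m_n}>K\sqrt{m_n}\}$ must contribute $o(n^{-3/2})$ uniformly in $a$ after integration---this is controlled by \eqref{eq:elementary} together with sending $K\to\infty$ after $n\to\infty$, in a way that preserves rather than erodes the factor $V^-(a)$.
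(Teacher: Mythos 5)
Your plan takes a genuinely different route from the paper. The paper decomposes with respect to $\tau$, the first hitting time of the global minimum, shows that $\tau \leq \sqrt n$ and $\max_{k\leq\tau}S_k \leq n^{1/4}$ with negligible error, and then applies the Markov property at $\tau$: after shifting by $S_\tau$, the post-minimum process is a walk from $0$ staying $\geq 0$, so it falls exactly into the setting of Lemma~\ref{lem:rw} (used as a black box), and the factor $V^-(a)$ emerges from the renewal identity $\sum_{k\ge 0}\P(\mS_{k-1}>S_k\geq -a)=V^-(a)$. You instead translate by $a$, cut at a deterministic early time $m_n=\floor{\sqrt n}$, and recover $V^-(a)$ via the harmonicity $\E_a[V^-(S_{m_n});\mS_{m_n}\geq 0]=V^-(a)$ from \eqref{eq:invariant}. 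This is a clean idea, but there is a real gap in the way you invoke Lemma~\ref{lem:rw}.

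The step ``the proof of Lemma~\ref{lem:rw} goes through verbatim for any starting point $z=o(\sqrt n)$, the sole modification being $\P_z(\mS_m\geq 0)\sim C_-V^-(z)/\sqrt m$'' is where the actual work is hidden, and it borders on circularity. After translating down by $z$, a walk started at $z$ with absorbing barrier at $0$ is a walk started at $0$ with barrier at $-z$; so the extension you are asserting is precisely the statement of the present Lemma~\ref{cor:rw} with $a=z$, and you need it uniformly for $z$ up to $K\sqrt{m_n}=Kn^{1/4}$, which is even larger than the $r_n=O(\log n)$ window the lemma covers. Re-running the proof of Lemma~\ref{lem:rw} with this starting point requires two ingredients not supplied by the paper: a uniform-in-$z$ version of the meander invariance principle \eqref{eq:cvtomeander} for walks started at $z=o(\sqrt m)$ conditioned on $\{\mS_m\geq 0\}$, and the uniform-in-$z$ asymptotic $\P_z(\mS_m\geq 0)\sim C_-V^-(z)/\sqrt m$ with an error small relative to $V^-(z)/\sqrt m$. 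These statements are true and obtainable (e.g., by integrating \eqref{eq:doney+} and by the $h$-transform FCLT \eqref{eq:cvtobessel} combined with an Imhof-type relation), but they have to be argued, and your ``verbatim'' formulation papers over them. The paper's decomposition at $\tau$ avoids all of this because it shifts to the minimum as new origin, never needing Lemma~\ref{lem:rw} with a nonzero start. So: right spirit, viable alternative strategy, but as written the central step assumes a strengthening of Lemma~\ref{lem:rw} that is equivalent in content to what you are trying to prove.
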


\begin{proof} Again, by Lemma A.1 of \cite{Pain}, we can suppose that $F\in\calC_b^u(\calD)$.
Up to decomposing $f$ into its positive and its negative part, we assume without loss of generality that $f \geq 0$.
We set
\begin{equation}
  \chi_a(F,f) := \E\left[ F(\ns) f(S_n-y) ; \mS_n \geq -a, \mS_{[\lambda n,n]} \geq y \right].
\end{equation}
Decomposing with respect to the first time at which the random walk hits its minimum, we prove that uniformly in $a\in[0,r_n]$, $\chi_a(F,f)\approx V^-(a)\chi(F,f)$. Let $\tau:=\inf\{0\leq k\leq n : S_k=\mS_n \}$, we show that $\tau \leq \sqrt{n}$ with high probability. By Markov property at time $k$, we have
\begin{align*}
\chi_a(F,f)
 &=\sum_{k=0}^n\E\left(F(\ns)f(S_n-y); \tau=k, \mS_n\geq-a, \mS_{[\lambda n, n]}\geq y\right)\\
  &\leq \sum_{k=0}^n\E\left(f(S_n-y); \tau=k, \mS_n\geq-a, \mS_{[\lambda n, n]}\geq y\right)\\
  &\leq \sum_{k=0}^n\E\left(\zeta(S_k, n-k) \ind{\mS_{k-1}>S_k\geq-a} \right),
\end{align*}
where $\zeta(x,n-k):=\E(f(S_{n-k}-y+x);\mS_{n-k}\geq0, \mS_{[\lambda n-k,n-k]}\geq y-x)$. 

On the one hand, observe that
\begin{align*}
  \zeta(x,n-k)\leq & \E\left(f(S_{n-k}-y+x)\ind{S_{n-k}\geq y-x};\mS_{n-k}\geq0\right)\\
  \leq & \sum_{j=0}^{\infty} \E\left(f(S_{n-k}-y+x)\ind{S_{n-k}\in [y-x+j, y-x+j+1]};\mS_{n-k} \geq 0 \right)\\
  \leq & \sum_{j=0}^{\infty} \hat{f}(j) \P\left( S_{n-k} \in [y-x+j, y-x+j+1], \mS_{n-k} \geq 0 \right),
\end{align*}
which, by \eqref{eq:elementary}, is bounded by
\[
 c_1\sum_{j=0}^\infty \hat{f}(j)\frac{(j+y-x+2)}{(n-k)^{3/2}}\leq \frac{c_1}{(1-\lambda)^{3/2}n^{3/2}}2(1+y-x)\sum_{j=0}^\infty (1+j)\hat{f}(j).
\]
As $\int x\hat{f}(x)dx<\infty$, uniformly in $a,y \in [0,r_n]$, $x\in[-a,0]$ and $k \leq \lambda n$, we have
\begin{equation}
  \label{eq:uppofzeta}
  \zeta(x,n-k)\leq c_2(1+y+a) n^{-3/2}.
\end{equation}

On the other hand, by \eqref{eq:elementary}, $\P(\mS_{k-1}>S_k\geq-a)\leq c_1(1+a)^2 k^{-3/2}.$ As a consequence, writing $k_n = \floor{\sqrt{n}}$, we have
\begin{align*}
  \sum_{k=k_n+1}^{\lambda n} \E\left(F(\ns)f(S_n-y); \tau=k, \mS_n\geq-a, \mS_{[\lambda n, n]}\geq y\right)
  \leq &c_3 n^{-3/2}(1+y+a)(1+a)^2\sum_{k=k_n+1}^{\lambda n}k^{-3/2}\\
  \leq & c_4\frac{(1+\log n)^3}{k_n^{1/2}}n^{-3/2}.
\end{align*}
Thus
\begin{align*}
  \chi_a(F, f) 
  &= \sum_{k=0}^{\lambda n} \E\left(F(\ns)f(S_n-y); \tau=k, \mS_n\geq-a, \mS_{[\lambda n, n]}\geq y\right)\\
  &= \sum_{k=0}^{k_n} \E\left(F(\ns)f(S_n-y); \tau=k, \mS_n\geq-a, \mS_{[\lambda n, n]}\geq y\right)+o(n^{-3/2}).
\end{align*}

We now prove that $\max_{k \leq \tau} S_k \leq n^{1/4}$ with high probability. Let $M>0$, by Markov property and \eqref{eq:uppofzeta},
\begin{align*}
  &\sum_{k=0}^{k_n} \E\left(F(\ns)f(S_n-y); \tau=k, \max_{j\leq k}S_j\geq M, \mS_n\geq-a, \mS_{[\lambda n, n]}\geq y\right)\\
  \leq & \sum_{k=0}^{k_n} \E\left(\zeta(S_k, n-k)\ind{\mS_{k-1}>S_k\geq-a; \max_{j\leq k}S_j\geq M} \right)\\
  \leq & c_2 n^{-3/2}(1+y+a) \sum_{k=0}^{k_n} \P\left(\mS_{k-1}>S_k\geq-a; \max_{j\leq k}S_j\geq M\right).
\end{align*}
We recall that $(\tau^-_k, H^-_k)_{k\geq0}$ are the strict descending epochs and heights of $(S_n)$. For all $k \geq 1$, the sequence $\left(\left\{S_{n-\tau^-_{k-1}} + H^-_{k-1}, n \leq \tau^-_k-\tau^-_{k-1}\right\}, k \geq 0\right)$ is i.i.d. Letting
\[M_k^-:=\max\{S_n+H^-_{k-1}, \tau^-_{k-1} \leq n \leq \tau^-_k\},\]
we deduce that $(H^-_k-H^-_{k-1},M_k^-)$ is i.i.d. Consequently,
\begin{align*}
  \sum_{k=0}^{k_n} &\P\left(\mS_{k-1}>S_k\geq-a; \max_{j\leq k}S_j\geq M\right)\\
  &\leq \P(M_1^-\geq M)+\sum_{k\geq1}\P(H^-_k\leq a, M_1^-<M,\ldots, M_k^-<M+H^-_{k-1}, M_{k+1}^->M+H^-_k)\\
  &\leq \P(M_1^-\geq M)+\sum_{k\geq1}\P(H^-_k\leq a,M_{k+1}^->M+H^-_k)\\
  &\leq \P(M_1^-\geq M)+\sum_{k\geq1}\P(H^-_k\leq a)\P(M_1^->M)\\
  &\leq V^-(a)\P(M_1^->M).
\end{align*}
According to Corollary 3 in~\cite{Don85}, $\P(M_1^->n)= \frac{c}{n}+o(n^{-1})$. Taking $M=n^{1/4}$ yields
\[\sum_{k=0}^{k_n} \E\left(F(\ns)f(S_n-y); \tau=k, \max_{j\leq k}S_j\geq n^{1/4}, \mS_n\geq-a, \mS_{[\lambda n, n]}\geq y\right)=o(n^{-3/2}).\]
Finally, by uniform continuity of $F$, we have
\begin{align}
  \label{eq:chia}
  \chi_a(F, f)=&\sum_{k=0}^{k_n} \E\Big(F(\ns)f(S_n-y); \tau=k, \max_{j\leq k}S_j\leq n^{1/4}, \mS_n\geq-a, \mS_{[\lambda n, n]}\geq y\Big)+o(n^{-3/2})\\
  =&\sum_{k=0}^{k_n} \E\Big(\zeta(S_k,n-k,F)\ind{\mS_{k-1}>S_k\geq-a} \Big)+o(n^{-3/2}),\nonumber
\end{align}
where 
\[
\zeta(x,n-k,F):=\E\left(F\Big(\frac{S_{\floor{(n-k)t}}}{\sqrt{\sigma^2(n-k)}}, t\in[0,1]\Big)f(S_{n-k}-y+x); \mS_{n-k}\geq0, \mS_{[\lambda n-k,n-k]}\geq y-x\right).
\]
Observe that for $k\leq \sqrt{n}$, the asymptotic behavior of $\zeta(x,n-k,F)$ follows from that of $\chi(F,f)$. It follows from \eqref{eq:convofrw} that uniformly in $k\leq k_n$, $x\in[-a,0]$ and $a, y\in[0,r_n]$,
\begin{equation*}
  \zeta(x,n-k,F)=\frac{C_1}{n^{3/2}} \E\left(F(\epsilon)\right)\int_0^\infty f(z) V^+(z)dz +o(n^{-3/2}).
\end{equation*}
Going back to \eqref{eq:chia}, we have
\begin{align*}
  \chi_a(F, f)=&\sum_{k=0}^{k_n} \E\left( \zeta(S_k,n-k,F) \ind{\mS_{k-1}>S_k\geq-a}\right)+o(n^{-3/2})\\
  =&\frac{C_1}{n^{3/2}}\E\left(F(\epsilon)\right) \int_0^\infty f(z) V^+(z)dz \sum_{k=0}^{k_n} \P\left(\mS_{k-1}>S_k\geq-a\right)+o(n^{-3/2}).
\end{align*}
Observe also that $\sum_{k=0}^{\infty} \P\left(\mS_{k-1}>S_k\geq-a\right)=V^-(a)$ and that uniformly in $a\in[0,r_n]$,
\[\sum_{k=k_n+1}^{\infty} \P\left(\mS_{k-1}>S_k\geq-a\right)=o_n(1).\]
We conclude that uniformly in $y,a\in[0,r_n]$,
\begin{equation}
  \lim_{n \to \infty} n^{3/2}\chi(F, f) = C_1V^-(a)\E\left(F(\epsilon)\right) \int_{0}^\infty f(z)V^+(z)dz ,
\end{equation}
which ends the proof.
\end{proof}

\section{Laplace transform of the Gibbs measure}
\label{sec:taildecay}

We recall that for a branching random walk $(V(u), u \in \T)$ and $\beta > 1$,
\[ \tilde{\mu}_{n,\beta}(F) = n^{\frac{3\beta}{2}}\sum_{|u|=n} e^{-\beta V(u)} F(H_n(u)). \]
This section is devoted to the computation of the Laplace transform of $\tilde{\mu}_{n,\beta}(F)$, which is closely related to the already known estimates on the minimal displacement of the branching random walk. Therefore, we define $M_n$ as the smallest occupied position in the $n$-th generation, i.e.,
\begin{equation}
  M_n:=\inf_{|u|=n} V(u),
\end{equation}
with the convention $\inf\emptyset:=\infty$. We denote by $m^{(n)}$ an individual chosen uniformly at random in the set $\{u: |u|=n, V(u)=M_n\}$ of leftmost individuals at time $n$.

The rest of this section is devoted to the proof of the following result.
\begin{proposition}
\label{tailprop}
Let $\beta > 1$, under \eqref{eqn:boundary}, \eqref{eqn:variance} and \eqref{eqn:integrability}, there exists $C_\beta > 0$ such that for all non-negative $F\in\calC_b(\calD)$ and $\varepsilon > 0$, there exists $(A,N) \in \R^+ \times \N$ such that
\begin{equation}
  \label{sansDelta2}
  \sup_{n \geq N} \sup_{x \in [A,\frac{3}{2}\log n - A]}\left| \frac{e^x}{x}\E\left[ 1 - \exp\left(-e^{-\beta x} \tilde{\mu}_{n,\beta}(F)\right) \right] - C_\beta \E\left(F(\epsilon)^{\frac{1}{\beta}}\right) \right|\leq \varepsilon,
\end{equation}
where $\epsilon$ is a standard Brownian excursion.
\end{proposition}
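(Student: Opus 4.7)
The plan is to extend the truncated-moment and spinal strategy of Madaule~\cite{Mad11} and Chen~\cite{Che14+} for extremal quantities of the branching random walk, using Lemma~\ref{cor:rw} to incorporate the path functional $F$.

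\emph{Step 1 --- reduction to the extremal window.} Set $r := \tfrac{3}{2}\log n - x \in [A, \tfrac{3}{2}\log n - A]$, so $r = o(\sqrt n)$, and
\[
N := e^{-\beta x}\tilde{\mu}_{n,\beta}(F) = \sum_{|u|=n}e^{-\beta(V(u)-r)}F(H_n(u)).
\]
Fix large $L, M_0, K$ and small $\epsilon$. For each $u$ with $|u|=n$, define $\mathcal{E}_u := \{V(u_k) \geq -L,\ \forall k \leq n\} \cap \{V(u_k) \geq r-M_0,\ \forall k \in [(1-\epsilon)n, n]\}\cap \{V(u)\leq r+K\}$, and let $\widehat N := \sum_{u:\,\mathcal{E}_u}e^{-\beta(V(u)-r)}F(H_n(u))$. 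Using the tail of $M_n$ from \cite{Che14+}, a first-moment bound exploiting $\beta > 1$ (for the upper cut-off $V(u) \leq r+K$), and the ballot estimate~\eqref{eq:elementary} through Lemma~\ref{lem:manytoone} (for the path restrictions), one obtains $\E[|(1-e^{-N}) - (1-e^{-\widehat N})|] = o(xe^{-x})$ uniformly in $x$, as $L,M_0,K\to\infty$ and $\epsilon\to 0$.

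\emph{Step 2 --- peeling and spinal decomposition.} Use the telescoping
\[
1 - e^{-\widehat N} = \sum_{u:\,\mathcal{E}_u}(1 - e^{-Y_u})\exp\!\Bigl(-\sum_{v \prec u,\,\mathcal{E}_v}Y_v\Bigr),\qquad Y_u := e^{-\beta(V(u)-r)}F(H_n(u)),
\]
with $\prec$ a fixed lexicographic order. Taking expectation and applying the spinal change of measure (Proposition~\ref{prop:spinaldecomposition}), each term is converted into an expectation with the selected individual $u$ playing the role of the spine $\omega_n$. Conditionally on the spine's trajectory, the side-tree Laplace factor converges, by iterating the spinal decomposition as in \cite[\S 4]{Mad11}, to a deterministic functional $\Theta_\beta$ of the spine's endpoint and $F(H_n(\omega_n))$ encoding the decoration cluster. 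Applying the many-to-one Lemma~\ref{lem:manytoone}, the main quantity becomes a random walk expectation of the form
\[
e^r\,\E\!\bigl[\,g_\beta(S_n - r, \ns^{(n)})\,;\,\underline{S}_n\geq -L,\ \underline{S}_{[(1-\epsilon)n,n]}\geq r-M_0\bigr],
\]
with $g_\beta(z, h) = (1 - e^{-e^{-\beta z}F(h)})\cdot \Theta_\beta(z, F(h))$ encoding the joint contribution of the individual Laplace weight and the side-tree limit.

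\emph{Step 3 --- Lemma~\ref{cor:rw} and identification.} Lemma~\ref{cor:rw} with $y = r - M_0$, $a = L$ and $\lambda = 1-\epsilon$, combined with the joint convergence of $(S_n - y, \ns^{(n)})$ to (endpoint density $V^+(z)\,dz$) $\otimes$ (Brownian excursion $e_1$), gives
\[
\E[1 - e^{-\widehat N}] \sim \frac{e^r}{n^{3/2}}\,C_1 V^-(L)\int_0^\infty \E[g_\beta(z - M_0, e_1)]\,V^+(z)\,dz\,(1+o(1)).
\]
Since $e^r/n^{3/2} = e^{-x}$, and using the renewal asymptotic $V^+(z)\sim c_+ z$ at infinity, the integral combines with the classical Laplace identity
\[
\int_{-\infty}^\infty(1-\exp(-e^{-\beta w}u))e^w\,dw = \Gamma(1-1/\beta)\,u^{1/\beta},\qquad u>0,
\]
together with a Tauberian-type asymptotic reflecting the tail $\P(Z_\infty>t)\sim c_Z/t$ of the derivative martingale limit, to produce the extra factor $x$ in $xe^{-x}$ (via $\int_u^\infty e^{-t}/t\,dt \sim \log(1/u) \sim x$ at $u = ce^{-x}$) and the exponent $1/\beta$ on $F(e_1)$. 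Sending $L, M_0, K\to\infty$ and $\epsilon \to 0$, all constants assemble into a single $C_\beta$.

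\emph{Main obstacle.} The delicate step is the uniform-in-$r$ identification in Step~2 of the side-tree Laplace factor with its decoration-process limit $\Theta_\beta$, and the matching Tauberian extraction of the $x$-factor in Step~3. This requires the uniformity built into Lemma~\ref{cor:rw}, the local Doney estimates \eqref{eq:doneyo}--\eqref{eq:doney+}, a careful control of the decoration-process convergence along the lines of \cite{Mad11}, and the Brownian-excursion limit of the spine's path from \cite{Che14+}, all propagated uniformly over $x \in [A, \tfrac32\log n - A]$.
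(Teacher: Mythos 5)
Your strategy is recognizably in the spirit of the paper's Section~\ref{sec:taildecay} (restrict to a good window, decompose via the spine, reduce to the random-walk excursion lemma), but it contains a genuine error in Step~1 that propagates to Step~3, and it also takes an avoidably harder route than the paper.

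The concrete error: your barrier $V(u_k)\ge -L$ with $L$ \emph{fixed} is wrong, both for the negligibility claim in Step~1 and for the identification of the factor $x$ in Step~3. With the barrier at a fixed depth $-L$, the surviving mass is of order $L\,e^{-x}$ (via the renewal asymptotics $V^-(L)\sim c_-L$), not $x\,e^{-x}$; equivalently, particles that dip below $-L$ but stay above $-x$ carry a $\Theta(x\,e^{-x})$ contribution that you have thrown away. The paper uses the set $J^L_{\lambda,z,K_0,L_0}(n)$ with the \emph{global} barrier at level $-z+K_0=-(x-\Delta)+K_0$, i.e.\ a barrier at depth $\approx x$, precisely so that (i) the discarded trajectories are $o(x\,e^{-x})$ and (ii) the renewal factor $V^-(z-K_0)\sim c_-\,x$ supplied by Lemma~\ref{cor:rw} produces the $x$ factor. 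Your proposed derivation of the $x$ factor from a Tauberian asymptotic tied to the tail $\P(Z_\infty>t)\sim c_Z/t$ and an integral $\int_u^\infty e^{-t}/t\,dt$ is a misattribution: $Z_\infty$ does not enter this proposition at all (it enters only later, in Proposition~\ref{propfi}), and the $x$ in $x\,e^{-x}$ comes purely from $V^-(\cdot)$ evaluated at the barrier depth, exactly as encoded in Lemma~\ref{cor:rw}.

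Beyond that error, the route is different from the paper's and considerably harder. You attempt to identify the constant $C_\beta$ directly by iterating the spinal decomposition and extracting a ``decoration-process'' functional $\Theta_\beta$, which would amount to re-deriving a large part of \cite{Mad11}. Your Step~2 also glosses over a key point: after the spinal change of measure, the contributions involve $F(H^n(v))$ for siblings $v$ of the spine, and one must argue that these are uniformly close to $F(H^n(\omega_n))$; this is only true for close relatives of the spine (particles branching off within $O(1)$ generations of the tip), which is precisely the content of Lemma~\ref{lem:compteB}/Fact~\ref{fact:splitting} and the uniform-continuity step in Lemma~\ref{lem:indep}. The paper avoids the explicit constant entirely: it proves $\E\bigl[\exp(-e^{-\beta x}\tilde\mu_{n,\beta}(F))\bigr]$ is asymptotically $\E\bigl[\exp(-e^{-\beta x}\tilde\mu_{n,\beta}(1)\,F(e_1))\bigr]$ with $e_1$ an \emph{independent} Brownian excursion, and then invokes the already known constant-$F$ estimate \eqref{sansF} from \cite{Mad11} with $\theta$ replaced by $\theta F(e_1)$. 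If you want to salvage your plan, replace the fixed barrier $-L$ by $-(x-\Delta)+K_0$, keep track of $V^-(\cdot)$ at that depth, handle the sibling trajectories via a uniform-continuity argument restricted to close relatives, and, to avoid recomputing the decoration constant, reduce to the constant-$F$ case as in the paper.
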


Observe that if $F=\theta \in \R^+$ is a constant, Proposition~\ref{tailprop} is: For all $\varepsilon > 0$, there exists $(A,N) \in \R^+ \times \N$ such that
\begin{equation}
  \label{sansF}
  \sup_{n \geq N} \sup_{x \in [A,\frac{3}{2}\log n - A]} \left|\frac{e^x}{x}\E\left(1-\exp\left(-  \theta e^{-\beta x} \tilde{\mu}_{n,\beta}(1)\right) \right)-C_\beta \theta^{\frac{1}{\beta}} \right|\leq \varepsilon,
\end{equation}
which is a straightforward consequence of~\cite[Proposition 2.2]{Mad11} (applying this result with $d=1$). Therefore, it is enough to prove, using Lemma~\ref{lem:rw} that
\[\E\left[\exp\left(- e^{-\beta x} \widetilde{\mu}_{n,\beta}(F)\right)\right] \approx \E\left[\exp\left(- e^{-\beta x} \tilde{\mu}_{n,\beta}(1)F(\epsilon)\right)\right] \]
where $\epsilon$ is a Brownian excursion independent of the branching random walk. This is done in Lemma~\ref{lem:indep}.

However, to realize this substitution, we first need to restrict the space on which we compute the Laplace transform of $\tilde{\mu}_{n,\beta}$ to an event of in which there is a unique family of particles in the neighborhood of the minimal displacement at time $n$, which followed a random walk excursion. The tail of the Laplace transform of $\tilde{\mu}_{n,\beta}$ on the whole space is well-approached by its tail on that subspace. The computation of the tail of a random variable by considering it on a subspace is a fruitful technique in branching processes, and can be tracked back at least to \cite{Bramson}. This method was used by Aïdékon \cite{Aid11} to obtain a precise estimate on the tail of the maximal displacement of the branching random walk.

For all $n \in \N$, following~\cite{Aid11}, we write $a_n = \frac{3}{2} \log n$ and $a_n(z) = a_n-z$. For all $x \in \R$, $F \in \calC_b(\calD,\R^+)$ and $E$ a measurable event, we write
\[\Sigma(n,x,F):=\E\left[\exp\left(-e^{-\beta x}\widetilde{\mu}_{n,\beta}(F)\right)\right] \quad \mathrm{and} \quad
\Sigma_E(n,x,F):=\E\left[\exp\left(-e^{-\beta x} \widetilde{\mu}_{n,\beta}(F)\indset{E}\right)\right].
\]

For $\lambda \in (0,1)$, $L,L_0 \geq 0$ and $z > K_0>0$, we define the set of individuals
\begin{equation}
  J^L_{\lambda,z,K_0,L_0}(n) = \left\{u \in \T:
  \begin{array}{c} |u|=n, V(u)\leq a_n(z-L), \min_{k\leq n}V(u_k)\geq -z+K_0,\\
  \min_{\lambda n\leq k\leq n}V(u_k)\geq a_n(z+L_0)\end{array}\right\}.
\end{equation}
For simplicity, we often write $J_{\lambda,z,K_0,L_0}(n)$ instead of $J^0_{\lambda,z,K_0,L_0}(n)$. We now consider the following event
\begin{equation}
  E_n:=\{m^{(n)}\in J_{\lambda,x-\Delta,K_0,L_0}(n)\}.
\end{equation}
At the end of the section, we will choose $\Delta < L_0 \ll K_0 \ll x$, and $L \in \{0,L_0\}$. We prove in a first time that $\Sigma$ and $\Sigma_{E_n}$ are close to each other.

\begin{lemma}
\label{lem:ajouteE}
There exists $\alpha_1>0$ small enough such that for all $\varepsilon>0$, there exists $\Delta_{\varepsilon,1}\geq1$ such that such that for all $\Delta \geq \Delta_{\varepsilon,1}$, $L_0\geq 2\Delta/\alpha_1$, $x\geq 2e^{K_0+\Delta}/\varepsilon$ and $n\geq1$, we have
\begin{equation}
  0\leq \Sigma_{E_n}(n,x,F)-\Sigma(n,x,F)\leq \varepsilon xe^{-x}.
\end{equation}
\end{lemma}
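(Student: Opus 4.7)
The lower bound $\Sigma_{E_n}(n,x,F) \geq \Sigma(n,x,F)$ is immediate from $\mathbf{1}_{E_n} \leq 1$ and the monotonicity of $z \mapsto e^{-z}$. For the upper bound, the two exponents coincide on $E_n$, so
\begin{equation*}
\Sigma_{E_n}(n,x,F) - \Sigma(n,x,F) = \E\Bigl[\mathbf{1}_{E_n^c}\bigl(1 - \exp(-e^{-\beta x}\tilde\mu_{n,\beta}(F))\bigr)\Bigr].
\end{equation*}
I decompose $E_n^c$ according to which of the three defining constraints of $J_{\lambda,x-\Delta,K_0,L_0}(n)$ fails for $m^{(n)}$:
$A_1 = \{M_n > a_n-x+\Delta\}$,
$A_2 = \{M_n \leq a_n-x+\Delta,\ \min_{k\leq n} V(m^{(n)}_k) < -x+\Delta+K_0\}$,
and $A_3 = \{M_n \leq a_n-x+\Delta,\ \min_{\lambda n\leq k\leq n} V(m^{(n)}_k) < a_n-x+\Delta-L_0\}$. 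Each piece is bounded by a different method.

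For $A_1$ the crucial observation is that every $|u|=n$ then satisfies $V(u) \geq a_n-x+\Delta$. Using $1-e^{-y} \leq y$ together with $F \leq \|F\|_\infty$, then restricting the sum $\tilde\mu_{n,\beta}(1)$ to trajectories obeying both barriers $\min_k V(u_k) \geq -x+K_0$ and $\min_{\lambda n\leq k\leq n} V(u_k) \geq a_n-x+\Delta-L_0$ (the complementary "bad-trajectory" contributions being absorbed into the treatment of $A_2$ and $A_3$ below), the many-to-one lemma (Lemma \ref{lem:manytoone}) reduces the remaining expectation to a random walk quantity of the form
\begin{equation*}
e^{-(\beta-1)Y}\E\bigl[f(S_n-Y);\ \mS_n \geq -(x-K_0),\ \mS_{[\lambda n,n]} \geq Y\bigr]
\end{equation*}
with $Y = a_n-x+\Delta-L_0$ and $f(z) = e^{-(\beta-1)z}\mathbf{1}_{z \geq L_0}$. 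Lemma \ref{cor:rw} produces an asymptotic of order $V^-(x-K_0)\cdot L_0 e^{-(\beta-1)L_0} \cdot n^{-3/2}$ for that expectation; multiplying by $e^{-(\beta-1)Y}$, by $n^{3\beta/2}$ and by $e^{-\beta x}$, the powers of $n$ cancel and one is left with a bound of shape $C\,x\,L_0\,e^{-(\beta-1)\Delta}e^{-x}$. With $L_0 = 2\Delta/\alpha_1$ for $\alpha_1 < \beta-1$ and $\Delta$ large enough (depending on $\varepsilon$), this is at most $(\varepsilon/3)xe^{-x}$.

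For $A_2$, $A_3$, and the bad-trajectory remainders left over from the $A_1$ step, I would use the cruder bound $1-e^{-y} \leq 1$ and estimate directly the probability that some individual $u$ with $V(u) \leq a_n-x+\Delta$ has an ancestral path violating one of the two barriers. A first-passage decomposition at the minimum of the many-to-one walk, in the same spirit as the proof of Lemma \ref{cor:rw}, combined with the ballot inequality \eqref{eq:elementary}, yields factors of the form $e^{-K_0}V^-(x-K_0)$ for $A_2$ and $L_0 e^{-(\beta-1)L_0}V^-(x-K_0)$ for $A_3$; the hypotheses $x \geq 2e^{K_0+\Delta}/\varepsilon$ and $L_0 \geq 2\Delta/\alpha_1$ are exactly what is needed to make each of these at most $(\varepsilon/3)xe^{-x}$. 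The main obstacle is the $A_1$ step: without the secondary barrier $\mS_{[\lambda n,n]} \geq Y$ the random walk would be free to make deep excursions where the exponential weight $e^{-(\beta-1)S_n}$ blows up, producing a bound off by a power of $n$; the joint control of endpoint, overall minimum and tail minimum supplied by Lemma \ref{cor:rw} (rather than only Lemma \ref{lem:rw}) is therefore essential, and the coupling $L_0 = 2\Delta/\alpha_1$ is dictated by the balance between the decay factor $e^{-(\beta-1)L_0}$ and the linear prefactor $L_0$ that appears after integrating against $V^+$.
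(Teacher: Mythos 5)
The lower bound and the reduction to $\E\bigl[\mathbf{1}_{E_n^c}\bigl(1-\exp(-e^{-\beta x}\tilde\mu_{n,\beta}(F))\bigr)\bigr]$ are correct and match the paper, as does the split according to which defining constraint of $J_{\lambda,x-\Delta,K_0,L_0}(n)$ fails. Your handling of $A_2$ and $A_3$ is the same in spirit as the paper's $\P_\ddagger$ bound, which simply cites Lemma 3.3 of A\"{\i}d\'ekon \cite{Aid11} rather than re-deriving it from a first-passage decomposition. The genuine difference, and the place where your argument breaks, is the $A_1$ piece. The paper keeps $1-e^{-W}\leq 1$ but writes $1-e^{-W}=\int_0^\infty e^{-u}\mathbf{1}_{W\geq u}\,du$, converting $\P_\dagger$ into a family of joint tail probabilities $\P\bigl(\tilde\mu_{n,\beta}(1)\geq e^{\beta(x+y)},\ M_n\geq a_n(x-\Delta)\bigr)$, which are controlled by Proposition 2.1 of \cite{Mad11}. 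That proposition provides precisely the extra exponential decay $e^{-\alpha(\Delta+y)}$ beyond the generic $(x+y)e^{-(x+y)}$ tail, and this decay is what produces the factor that is beaten down by taking $\Delta$ large. You instead linearize via $1-e^{-y}\leq y$ and try to bound a first moment.

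The problem is the ``bad-trajectory remainders'' you propose to absorb into the $A_2,A_3$ step. On $A_1$ every terminal particle has $V(u)>a_n(x-\Delta)$, whereas your $A_2,A_3$ analysis concerns particles with $V(u)\leq a_n(x-\Delta)$; these are disjoint sets of particles, so no absorption takes place. If one then tries to bound the remainder directly, say for the first-barrier violation,
\begin{equation*}
e^{-\beta x}\,n^{3\beta/2}\,\E\Bigl[e^{-(\beta-1)S_n};\ S_n>a_n-x+\Delta,\ \mS_n<-(x-K_0)\Bigr],
\end{equation*}
the constraint $S_n>a_n-x+\Delta$ caps the weight and the $n$--powers reduce to $n^{3/2}$, but without a ballot-type constraint on the whole path the probability that $S_n$ sits in a window of size one near the level $a_n-x+\Delta=O(\log n)$ is only of order $n^{-1/2}$ by the local CLT, leaving a bound of order $n\,e^{-x-(\beta-1)\Delta}$ --- larger than the target $x e^{-x}$ by a factor of order $n/x$. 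There is no way to recover the missing $n^{-1}x$ without using the joint information on the whole branching random walk that Proposition 2.1 of \cite{Mad11} encodes; the crude many-to-one first moment throws that away. In short, your treatment of $A_1$ contains a real gap, and the ``coupling'' heuristic about $e^{-(\beta-1)L_0}$ versus the prefactor $L_0$ is also not what actually happens in the estimate (the two $L_0$--exponentials cancel, leaving $L_0 e^{-(\beta-1)\Delta}$, whose control for arbitrary $L_0\geq 2\Delta/\alpha_1$ is a separate issue your argument does not address, whereas the paper's treatment of $\P_\dagger$ never involves $L_0$ at all).
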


\begin{proof}
Observe that
\begin{align*}
  \Sigma_{E_n}(n,x,F)=&\E\left[\exp\left(-e^{-\beta x}\widetilde{\mu}_{n,\beta}(F)\right); E_n\right)+\P\left(E_n^c\right),\\
  \Sigma(n,x,F)=&\E\left[\exp\left(-e^{-\beta x}\widetilde{\mu}_{n,\beta}(F)\right); E_n\right]+\E\left[\exp\left(-e^{-\beta x}\widetilde{\mu}_{n,\beta}(F)\right); E_n^c\right].
\end{align*}
As a consequence,
\begin{equation}
0\leq \Sigma_{E_n}(n,x,F)-\Sigma(n,x,F)=\E\left(1-\exp\{-e^{-\beta x}\widetilde{\mu}_{n,\beta}(F)\}; E_n^c\right).
\end{equation}

We observe that $1-e^{- W}=\int_0^\infty e^{-u}\ind{W\geq u}du$, thus
\begin{align*}
  \Sigma_{E_n}(n,x,F)-\Sigma(n,x,F)
  &= \E\left[\int_0^\infty e^{-u}\ind{e^{-\beta x} \widetilde{\mu}_{n,\beta}(F)\geq u}du; E_n^c\right]\\
  &= \int_0^\infty e^{-u}\P\left(e^{-\beta x} \widetilde{\mu}_{n,\beta}(F)\geq u;   E_n^c\right)du.
\end{align*}
Using the fact that $F$ is non-negative bounded, we have
\[
  \Sigma_{E_n}(n,x,F)- \Sigma(n,x,F)
  \leq \int_0^{\infty} e^{-u} \P\left( \tilde{\mu}_{n,\beta}(1) \geq \tfrac{u}{\norme{F}_\infty}e^{\beta x} ; E_n^c \right).
\]

Let $\Delta \in (1,x-1)$, as $E_n^c\subset \{M_n\geq a_n(x-\Delta)\}\cup\left(E_n^c\cap\{M_n\leq a_n(x-\Delta)\}\right)$, we have
\begin{multline}
\label{eq:ajouteEetM}
  \Sigma_{E_n}(n,x,F)- \Sigma(n,x,F)\\
  \leq \underbrace{\int_0^\infty e^{-u}\P\left(\tilde{\mu}_{n,\beta}(1)\geq \tfrac{u}{\norme{F}_\infty} e^{\beta x}; M_n\geq a_n(x-\Delta)\right)du}_{\P_\dagger}\\
  + \int_{0}^\infty e^{-u}\underbrace{\P\left(M_n\leq a_n(x-\Delta); E_n^c\right)}_{\P_\ddagger} du.
\end{multline}

On the one hand,
\begin{align}
  \P_\ddagger =& \P\left(m^{(n)}\notin J_{\lambda,x-\Delta,K_0,L_0}(n); M_n\leq a_n(x-\Delta)\right)\nonumber\\
  \leq& \P\left(\exists z: |z|=n, V(z)\leq a_n(x-\Delta), z\notin J_{\lambda,x-\Delta,K_0,L_0}(n)\right)\nonumber\\
  \leq &\left(e^{K_0}+e^{-c_6 L_0}x\right)e^{-x+\Delta}
\label{eq:ddagger}
\end{align}
applying Lemma 3.3 in A\"{i}d\'ekon~\cite{Aid11}.

On the other hand, by change of variables, 
\begin{equation}
  \P_\dagger=\int_{\R} \beta e^{-e^{\beta y}+\beta y}\underbrace{\P\left(\tilde{\mu}_{n,\beta}(1)\geq \tfrac{1}{\norme{F}_\infty} e^{\beta(x+y)}; M_n\geq a_n(x-\Delta)\right)}_{\P_\dagger(x,y)}dy.
\end{equation}
To bound $\P_\dagger(x,y)$, we use Proposition 4.6 of~\cite{Mad11} (more precisely Equation (4.15) of that article). For all $0\leq K\leq  \Delta$, one sees immediately that, for $|y|\leq K$,
\begin{align}
  \P_\dagger(x,y) &= \P\left(\tilde{\mu}_{n,\beta}(1) \geq\frac{1}{\norme{F}_\infty} e^{\beta (x+y)}; M_n\geq a_n(x-\Delta)\right)\nonumber\\
  &\leq \sum_{j\geq \Delta+y}\P\left(\tilde{\mu}_{n,\beta}(1)\geq \frac{1}{\norme{F}_\infty} e^{\beta (x+y)}; M_n-a_n(0)\in[j-(x+y); (j+1)-(x+y)]\right)\nonumber\\
  &\leq c_7 (x+y)e^{-(x+y)} e^{-\alpha (\Delta+y)}\nonumber\\
  &\leq c_8 xe^{-x} e^{(1+\alpha)K-\alpha \Delta}  \label{eq:daggerinK},
\end{align}
where $\alpha >0$ is a given constant depending only on the law of the branching random walk.
In the same way, for $|y|>K$, we have
\begin{align}
  \P_\dagger(x,y) &\leq \P\left( \tilde{\mu}_{n,\beta}(1) \geq \frac{1}{\norme{F}_\infty} e^{\beta (x+y)}\right)\nonumber\\
  &\leq c_9(x+y)e^{-(x+y)} \ind{x+y \geq 1} + \ind{y+x \leq 1}\nonumber\\
  &\leq c_9 xe^{-x} \ind{y>K} + c_9 e^{-(x+y)}\ind{-K>y\geq 1-x} + \ind{x+y \leq 1}.
  \label{eq:daggeroutK}
\end{align}
Combining \eqref{eq:daggerinK} with \eqref{eq:daggeroutK} yields
\begin{align*}
  \P_\dagger &\leq c_8 xe^{-x} e^{(1+\alpha)K-\alpha \Delta} + c_9 xe^{-x}\int_{K}^{\infty} \beta e^{-e^{\beta y}+\beta y}dy\nonumber\\
  &\qquad \qquad +c_9 xe^{-x}\int_{\{1-x \leq y < -K\}}\beta e^{-e^{\beta y}+\beta y-y}dy+\int_{-\infty}^{1-x} \beta e^{-e^{\beta y}+\beta y}dy \nonumber\\
&\leq c_8 xe^{-x} e^{(1+\alpha)K-\alpha \Delta}+c_{10} xe^{-x}\left(e^{(1-\beta)K}+e^{(1-\beta)x}\right).
\end{align*}
Take $K=\frac{\alpha\Delta}{\alpha+\beta}$. There exists $c_{11}>0$ such that
\begin{equation}
  \label{eq:dagger}
  \P_\dagger\leq (c_8 + c_{10}) xe^{-x} e^{-\frac{(\beta-1)\alpha\Delta}{\alpha+\beta}} +c_{10} xe^{-x}e^{(1-\beta)x} \leq c_{11} x e^{-x} e^{-\frac{(\beta-1)\alpha\Delta}{\alpha+\beta}},
\end{equation}
for all $x \geq 2 e^{K_0+\Delta}/\varepsilon \geq 1$.

Using \eqref{eq:ddagger} and \eqref{eq:dagger}, inequality \eqref{eq:ajouteEetM} becomes
\begin{equation}
  \Sigma_{E_n}(n,x,F)-\Sigma(n,x,F) \leq c_{11} x e^{-x} e^{-\frac{(\beta-1)\alpha\Delta}{\alpha+\beta}}+\left(e^{K_0}+e^{-c_6 L_0}x\right)e^{-x+\Delta}.
\end{equation}
We set $\alpha_1:=\min\{\frac{(\beta-1)\alpha}{\alpha+\beta},c_6\}$ and $L_0\geq 2\Delta/\alpha_1$, we have
\begin{equation*}
  \Sigma_{E_n}(n,x,F)-\Sigma(n,x,F) \leq c_{12} x e^{-x} e^{-\alpha_1\Delta}+\frac{e^{K_0+\Delta}}{x} xe^{-x}.
\end{equation*}
Since $\alpha_1>0$, for all $\varepsilon>0$, there exists $\Delta_{\varepsilon,1}>1$ such that $c_{12}e^{-\alpha_1\Delta_{\varepsilon,1}}\leq \varepsilon/2$. For all $\Delta\geq\Delta_{\varepsilon,1}$ and $x\geq 2e^{K_0+\Delta}/\varepsilon$ we obtain finally that
\begin{equation}
  \Sigma_{E_n}(n,x,F)-\Sigma(n,x,F) \leq\varepsilon xe^{-x},
\end{equation}
which ends the proof.
\end{proof}

In what follows, we prove that on the set $E_n$, the individuals who make the most important contribution to $\tilde{\mu}_{n,\beta}(F)$ are the ones who are geographically close to $m^{(n)}$. For any $L\geq1$, let
\begin{equation}
  \tilde{\mu}^L_{n,\beta}(F) := n^{3\beta/2} \sum_{u \in J_{\lambda,x-\Delta, K_0,L_0}^L(n)} e^{-\beta V(u)} F(H^{(n)}(u)) \quad \mathrm{and} \quad \widetilde{W}^{L}_{n,\beta} := \tilde{\mu}^L_{n,\beta}(1).
\end{equation}
In the same way as above, for any measurable event $E$, we denote
\begin{equation}
\Sigma^L_E(n,x,F):=\E\left[\exp\left(-e^{-\beta x}\widetilde{\mu}^L_{n,\beta}(F)\indset{E}\right)\right].
\end{equation}

We now prove the following lemma.
\begin{lemma}
\label{lem:ajouteL}
There exists $\alpha_2>0$ such that for all $\varepsilon > 0$ there exists $\Delta_{\varepsilon,2} \geq 1$ such that for all $\Delta \geq \Delta_{\varepsilon,2}$, $L=L_0\geq 2\Delta/\alpha_2$, $x\geq 2e^{K_0+\Delta}/\varepsilon$ and $n\geq1$, we have
\begin{equation}
  0\leq \Sigma^L_{E_n}(n,x,F)-\Sigma_{E_n}(n,x,F)\leq \varepsilon x e^{-x}.
\end{equation}
\end{lemma}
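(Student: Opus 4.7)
The plan is to follow the same strategy as the proof of Lemma~\ref{lem:ajouteE}: bound the difference of two Laplace functionals by a remainder that counts the ``missing'' mass, then split that remainder according to which constraint defining $J^L_{\lambda,x-\Delta,K_0,L_0}(n)$ is violated. First, since the constraint $V(u)\leq a_n(x-\Delta-L)$ is weaker than $V(u)\leq a_n(x-\Delta)$, we have $J\subset J^L$ and hence $\tilde{\mu}^L_{n,\beta}(F)\leq\tilde{\mu}_{n,\beta}(F)$; writing $\tilde{R}_n:=\tilde{\mu}_{n,\beta}(F)-\tilde{\mu}^L_{n,\beta}(F)\geq0$, the elementary inequality $e^{-a}-e^{-b}\leq 1-e^{-(b-a)}$ for $0\leq a\leq b$, together with the fact that both exponentials agree on $E_n^c$, gives
\[
0\leq\Sigma^L_{E_n}(n,x,F)-\Sigma_{E_n}(n,x,F)\leq\E\bigl[(1-\exp(-e^{-\beta x}\tilde{R}_n))\indset{E_n}\bigr].
\]

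Next, I would partition the set of bad particles $\{u:|u|=n,\,u\notin J^L\}$ into three disjoint pieces according to the first violated constraint: $A^{(a)}=\{V(u)>a_n(x-\Delta-L)\}$, $A^{(b)}=\{V(u)\leq a_n(x-\Delta-L),\,\min_{k\leq n}V(u_k)<-(x-\Delta)+K_0\}$, and $A^{(c)}=\{u\notin A^{(a)}\cup A^{(b)},\,\min_{\lambda n\leq k\leq n}V(u_k)<a_n(x-\Delta+L_0)\}$. Denoting by $\tilde{R}_n^{(i)}$ the corresponding contribution to $\tilde{R}_n$, subadditivity of $W\mapsto1-e^{-W}$ on $\R_+$ yields
\[
\E\bigl[(1-e^{-e^{-\beta x}\tilde{R}_n})\indset{E_n}\bigr]\leq\sum_{i\in\{a,b,c\}}\E\bigl[(1-e^{-e^{-\beta x}\tilde{R}_n^{(i)}})\indset{E_n}\bigr],
\]
so it suffices to bound each summand by $\varepsilon xe^{-x}/3$.

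For each $i$, I would apply the layer-cake identity and the change of variable $u=e^{\beta y}$ exactly as in the proof of Lemma~\ref{lem:ajouteE} to rewrite the $i$-th summand as $\int_\R\beta e^{\beta y-e^{\beta y}}\P(\tilde{R}_n^{(i)}\geq e^{\beta(x+y)};E_n)\,dy$. For $i=a$, the contributing particles all satisfy $V(u)-M_n>L$ on~$E_n$, and Proposition~2.1 of \cite{Mad11}, applied with the threshold shifted from $a_n(x-\Delta)$ to $a_n(x-\Delta-L)$, gives as in \eqref{eq:daggerinK} a bound of the form $c(x+y)e^{-(x+y)}e^{-\alpha(\Delta+L+y)}$ for $|y|\leq K$; choosing $K=\alpha(\Delta+L)/(\alpha+\beta)$ and splitting the $y$-integral as in \eqref{eq:daggeroutK}--\eqref{eq:dagger} then yields a bound of the form $c\,xe^{-x}e^{-(\beta-1)\alpha(\Delta+L)/(\alpha+\beta)}$. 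For $i=b$ and $i=c$, by construction the bad particles now satisfy $V(u)\leq a_n(x-\Delta-L)$, so they fit into the setting of Lemma~3.3 of \cite{Aid11} as used in \eqref{eq:ddagger}, producing bounds of the form $e^{K_0}e^{-x+\Delta}$ and $e^{-c_6L_0}\,xe^{-x+\Delta}$ respectively.

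Setting $\alpha_2:=\min\{(\beta-1)\alpha/(\alpha+\beta),\,c_6\}$ and enforcing $L=L_0\geq2\Delta/\alpha_2$ then aggregates to a cumulative bound $\Sigma^L_{E_n}-\Sigma_{E_n}\leq c\,xe^{-x}\bigl(e^{-\alpha_2\Delta}+e^{K_0+\Delta}/x\bigr)$; choosing $\Delta_{\varepsilon,2}$ large enough that $ce^{-\alpha_2\Delta_{\varepsilon,2}}\leq\varepsilon/2$ together with $x\geq2e^{K_0+\Delta}/\varepsilon$ concludes. The only nontrivial point---and the main technical obstacle---is the adaptation of Proposition~2.1 of \cite{Mad11} to the thresholded sum $\tilde{R}_n^{(a)}$. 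Since that proposition is obtained by controlling the weight of particles above a prescribed level, restricting the sum to particles above the \textit{higher} level $a_n(x-\Delta-L)$ only tightens the constraint on the spine trajectory, so the same argument carries over \emph{mutatis mutandis} with $\Delta+L$ in place of $\Delta$; this is the sole place where the parameter $L$ enters the decay, and it is precisely what allows $L=L_0$ to absorb $\Delta$ in the choice of~$\alpha_2$.
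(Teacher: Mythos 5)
Your overall architecture is a genuine alternative to the paper's, but it contains a real gap in the case you yourself flag as the ``main technical obstacle.'' The paper's proof is shorter and avoids any case decomposition of the complement of $J^L$: it uses the elementary truncation inequality
\[
(e^{-W_1}-e^{-W_2})\ind{0\leq W_1\leq W_2}\leq \delta+\ind{W_2-W_1>\delta},
\]
applied with $\delta=e^{-\beta\Delta}$, which produces exactly two terms --- $e^{-\beta\Delta}\P(E_n)$, handled by the tail of $M_n$, and a single tail probability $\P_\diamond=\P\bigl(n^{3\beta/2}\sum_{u\notin J^L}e^{-\beta V(u)}\geq e^{\beta(x-\Delta)};M_n\leq a_n(x-\Delta)\bigr)$ --- and then invokes Proposition~4.6 of \cite{Mad11} once to bound $\P_\diamond$ in a single shot. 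That proposition is the statement you would need to prove for your case $a$.

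The gap is precisely there: for the event $A^{(a)}=\{V(u)>a_n(x-\Delta-L)\}$ you need a tail bound for the \emph{restricted} sum $\tilde{R}_n^{(a)}$, namely the weight of particles lying above a prescribed level while the true minimum $M_n$ is allowed to be far below that level. Proposition~2.1 of \cite{Mad11}, as used in \eqref{eq:daggerinK}, bounds the tail of the \emph{full} sum $\tilde{\mu}_{n,\beta}(1)$ under a hypothesis on the position of $M_n$ itself; the two statements concern different random variables, and the passage from one to the other is not a reparametrization. Replacing $a_n(x-\Delta)$ by $a_n(x-\Delta-L)$ in Proposition~2.1 would give you control of $\tilde{\mu}_{n,\beta}(1)$ on the event $\{M_n\geq a_n(x-\Delta-L)\}$, whereas on $E_n$ you explicitly have $M_n\leq a_n(x-\Delta)$, which is strictly below $a_n(x-\Delta-L)$. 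What is actually required is a decoupling argument (spinal decomposition separating the low particles near $M_n$ from the high ones above the cutoff), and that is precisely the content of Proposition~4.6; it is a distinct result, not a corollary of Proposition~2.1. So the sentence ``the same argument carries over \emph{mutatis mutandis}'' is an assertion of exactly the nontrivial step, not a proof of it. Your treatment of cases $b$ and $c$, on the other hand, is fine: after the layer-cake integral you are left with $\P(\exists u\in A^{(i)})$, which is what Lemma~3.3 of \cite{Aid11} controls, exactly as in \eqref{eq:ddagger}. If you want to keep your decomposition, the fix is simply to cite Proposition~4.6 of \cite{Mad11} for case $a$ (and, in fact, for all three cases at once, which collapses your argument back to the paper's).
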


\begin{proof}
As $\tilde{\mu}_{n, \beta}(F) \geq \tilde{\mu}^L_{n, \beta}(F)$, we have
\begin{equation*}
  \Sigma^L_{E_n}(n,x,F)-\Sigma_{E_n}(n,x,F)=\E\left[\exp\left(-e^{-\beta x}\widetilde{\mu}^L_{n,\beta}(F)\right)-\exp\left(-e^{-\beta x}\widetilde{\mu}_{n,\beta}(F)\right); E_n\right]\geq 0,
\end{equation*}
We observe that, for all $0 \leq W_1 \leq W_2$,
\begin{align*}
\left(e^{-W_1}- e^{-W_2}\right)\ind{W_2-W_1\geq 0} \leq &|W_1-W_2|\ind{0\leq W_2-W_1\leq \delta}+\ind{W_2-W_1> \delta}\\
\leq &\delta + \ind{W_2-W_1> \delta}.
\end{align*}
Applying this inequality with $\delta=e^{-\beta\Delta}$, $W_1=\tilde{\mu}^L_{n, \beta}(F)$ and $W_2=\tilde{\mu}_{n, \beta}(F)$ gives
\[
  \Sigma^L_{E_n}(n,x,F)-\Sigma_{E_n}(n,x,F)
  \leq e^{-\beta\Delta}\P\left(E_n\right)+\P\left(\widetilde{\mu}_{n,\beta}(F)-\widetilde{\mu}^L_{n,\beta}(F)\geq e^{\beta(x-\Delta)}; E_n\right).
\]

As $E_n\subset\{M_n\leq a_n(x-\Delta)\}$, we have
\begin{equation}\label{eq:ajouteL}
  \Sigma^L_{E_n}(n,x,F)-\Sigma_{E_n}(n,x,F)\leq e^{-\beta\Delta}\P\Big(M_n\leq a_n(x-\Delta)\Big)+\P_\diamond,
\end{equation}
where
\begin{equation*}
  \P_\diamond:= \P\left(n^{3\beta/2}\sum_{|u|=n}\ind{u\notin J_{\lambda,x-\Delta,K_0,L_0}^{L}(n)} e^{-\beta V(u)} \geq e^{\beta(x-\Delta)}; M_n\leq a_n(x-\Delta)\right).
\end{equation*}
From \eqref{eq:ajouteL}, on the one hand we recall (see e.g. the proof of the upper bound of Theorem~4.1 in~\cite{Mall}) there exists $c_{12}>0$ such that for all $x \geq \Delta +1$,
\begin{equation*}
  \P\left(M_n\leq a_n(x-\Delta)\right) \leq c_{12} (x-\Delta)e^{-(x-\Delta)}.
\end{equation*}
On the other hand, by Proposition 4.6 of~\cite{Mad11}, there exists $\alpha_2\in(0,\beta-1)$ such that for $L=L_0$,
\begin{equation}
  \label{eq:diamond}
  \P_\diamond\leq e^{K_0+\Delta}e^{-x}+ c_{13} xe^{-x}e^{-\alpha_2 L_0+\Delta}.
\end{equation}
As a consequence,
\begin{equation*}
  \Sigma^L_{E_n}(n,x,F)-\Sigma_{E_n}(n,x,F)\leq c_{14} xe^{-x}\left( e^{-\alpha_2\Delta}+e^{-\alpha_2 L_0+\Delta}\right)+e^{K_0+\Delta}e^{-x}.
\end{equation*}
For any $\varepsilon>0$, there exists $\Delta_{\varepsilon,2}>0$ such that $c_{14}e^{-\alpha_2\Delta_{\varepsilon,2}}\leq \varepsilon/4$. We set $\Delta\geq\Delta_{\varepsilon,2}$, $L_0\geq2\Delta/\alpha_2$ and $x\geq 2e^{K_0+\Delta}/\varepsilon$, and obtain that
\begin{equation}
0\leq\Sigma^{L_0}_{E_n}(n,x,F)-\Sigma_{E_n}(n,x,F)\leq \varepsilon xe^{-x}
\end{equation}
which ends the proof.
\end{proof}

Recall that $m^{(n)}$ is uniformly chosen from the set of leftmost individuals at time $n$. For any $1\leq k\leq n$, we use $m_k^{(n)}$ to represent the ancestor of $m^{(n)}$ at generation $k$. We prove now that the individuals who make significant contributions to $\tilde{\mu}$ are the close relatives of $m^{(n)}$. We write, for $k \leq n$
\begin{equation}
  \widehat{\mu}^{L}_{n,k,\beta}(F):=n^{3\beta/2}\sum_{u\in J_{\lambda,x-\Delta,K_0,L_0}^{L}(n)}\ind{u\geq m_{k}^{(n)}} e^{-\beta V(u)}F(H^{(n)}(u)),
\end{equation}
and for $E$ a measurable event
\begin{equation}
  \widehat{\Sigma}^{L}_{E}(n,k,x,F):=\E\left[\exp\left(-e^{-\beta x}\widehat{\mu}^{L}_{n,k,\beta}(F)\indset{E}\right)\right].
\end{equation}
 
\begin{lemma}
\label{lem:compteB}
For all $\varepsilon>0$ and $L_0\geq1$, there exist $K=K_{\varepsilon,L_0}>0$, $B=B_{\varepsilon,L_0}\geq1$ and $N=N_{\varepsilon,L_0}\geq1$ such that for all $K_0\geq K+L_0$, $n\geq N$ and $b\geq B$,
\begin{equation}
  0 \leq \widehat{\Sigma}^{L_0}_{E_n}(n,n-b,x,F)-\Sigma^{L_0}_{E_n}(n,x,F)\leq \varepsilon x e^{-x}.
\end{equation}
\end{lemma}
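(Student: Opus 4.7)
The lower bound is immediate: pointwise $\widehat{\mu}^{L_0}_{n,n-b,\beta}(F)\leq \widetilde{\mu}^{L_0}_{n,\beta}(F)$, and since $t\mapsto e^{-t}$ is decreasing, $\widehat{\Sigma}^{L_0}_{E_n}(n,n-b,x,F)\geq \Sigma^{L_0}_{E_n}(n,x,F)$.

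For the upper bound I will mimic the templates of Lemmas~\ref{lem:ajouteE} and~\ref{lem:ajouteL}. Denote
\begin{equation*}
D_n:=\widetilde{\mu}^{L_0}_{n,\beta}(F)-\widehat{\mu}^{L_0}_{n,n-b,\beta}(F) = n^{3\beta/2}\sum_{\substack{u\in J^{L_0}_{\lambda,x-\Delta,K_0,L_0}(n)\\ u\not\geq m^{(n)}_{n-b}}} e^{-\beta V(u)}F(H_n(u))\geq 0,
\end{equation*}
the contribution to $\widetilde{\mu}^{L_0}_{n,\beta}(F)$ coming from individuals whose ancestral line splits from that of $m^{(n)}$ strictly before generation $n-b$. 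Applying the elementary inequality $e^{-W_1}-e^{-W_2}\leq \delta+\ind{W_2-W_1>\delta}$ (valid for $0\leq W_1\leq W_2$) with $\delta=e^{-\beta B}$ and $W_i$ the exponents inside the two Laplace transforms, one gets
\begin{equation*}
\widehat{\Sigma}^{L_0}_{E_n}(n,n-b,x,F)-\Sigma^{L_0}_{E_n}(n,x,F)\leq \delta\,\P(E_n)+\P(D_n>\delta e^{\beta x};E_n).
\end{equation*}
Using $E_n\subset\{M_n\leq a_n(x-\Delta)\}$ together with the estimate $\P(M_n\leq a_n(x-\Delta))\leq c_9(x-\Delta)e^{-(x-\Delta)}$ from \cite{Aid11} already invoked in \eqref{eq:ddagger}, the first term is bounded by $\varepsilon xe^{-x}/2$ as soon as $B$ is chosen large enough in terms of $\varepsilon$ and $\Delta$.

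The bulk of the work is bounding the tail probability. Since $F$ is bounded, Markov's inequality yields
\begin{equation*}
\P(D_n>\delta e^{\beta x};E_n)\leq \|F\|_\infty \, \delta^{-1}e^{-\beta x}\,\E\Big[\indset{E_n}\, n^{3\beta/2}\sum_{\substack{u\in J^{L_0}\\ u\not\geq m^{(n)}_{n-b}}} e^{-\beta V(u)}\Big],
\end{equation*}
so it suffices to control this expectation. My plan is to decompose the inner sum according to the divergence generation $j\in\{0,\dots,n-b-1\}$ at which $u$'s lineage parts from that of $m^{(n)}$, isolating the sibling $w$ of $m^{(n)}_{j+1}$ through which the branch feeding $u$ passes. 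To eliminate the random object $m^{(n)}$, I would bound $\indset{E_n}$ by a sum over all candidate leftmost individuals $v\in J_{\lambda,x-\Delta,K_0,L_0}(n)$ subject to $V(v)\leq V(u)$ (since $u$ is not leftmost). Conditioning on the branching structure up to time $j+1$ and invoking the branching property, the two subtrees rooted at $v_{j+1}$ and $w$ become independent, and the many-to-one Lemma~\ref{lem:manytoone} applied to each lineage reduces the resulting many-to-two expectation to a functional of two independent random walks sharing a common path on $[0,j]$; Lemma~\ref{cor:rw} and the ballot estimate~\eqref{eq:elementary} then provide sharp bounds under the barrier constraints defining $J$ and $J^{L_0}$.

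The main obstacle will be this moment estimate, which must decay simultaneously in $K_0$ and in $b$. The decay in $K_0$ is expected to come from the requirement that both lineages stay above the barrier $-(x-\Delta)+K_0$ rather than $-(x-\Delta)$, producing via the renewal function $V^-$ a damping factor shrinking with $K_0$. The decay in $b$ should come from the ballot probability on the post-split interval of length at least $b$, whose contribution after summation over $j\leq n-b-1$ is of order $b^{-1/2}$. Combined with the prefactor $\delta^{-1}e^{-\beta x}=e^{\beta(B-x)}$, a suitable choice of $K_0\geq K_\varepsilon+L_0$, $b\geq B_\varepsilon$ and $n\geq N_\varepsilon$ will force the tail probability below $\varepsilon xe^{-x}/2$; combined with the first-term estimate, this yields the desired upper bound $\varepsilon xe^{-x}$.
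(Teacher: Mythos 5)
Your lower bound is correct and matches the paper. For the upper bound, however, your route diverges from the paper's and contains a gap that I believe is fatal as written.

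The paper does not introduce a truncation level $\delta$ at all. It passes to the spinal measure $\hat{\P}$ via Proposition~\ref{prop:spinaldecomposition}, rewriting the difference as
\[
\hat{\E}\!\left[\frac{e^{V(\omega_n)}\ind{V(\omega_n)=M_n,\ \omega_n\in J}}{\sum_{|u|=n}\ind{V(u)=M_n}}
\left(e^{-e^{-\beta x}\mathring{\mu}^{L_0}_{n,n-b,\beta}(F)}-e^{-e^{-\beta x}\widetilde{\mu}^{L_0}_{n,\beta}(F)}\right)\right],
\]
observes that the parenthesis vanishes identically on the event $\xi_n(x-\Delta,L_0,b)$ (since on that event the two measures $\mathring{\mu}$ and $\widetilde{\mu}^{L_0}$ coincide), bounds the ratio crudely by $e^{V(\omega_n)}\leq n^{3/2}e^{-x+\Delta}$, and invokes Fact~\ref{fact:splitting}. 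The whole burden of the decay in $b$ is carried by that Fact, which is a $\hat{\P}$-probability estimate, not a moment estimate.

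Your plan replaces this with the template of Lemmas~\ref{lem:ajouteE}--\ref{lem:ajouteL}: split off a tail event at level $\delta$ and then bound the tail by Markov's inequality. The problem is the choice $\delta=e^{-\beta B}$. This makes the first term small, but it turns the Markov prefactor into $\delta^{-1}e^{-\beta x}=e^{\beta B}e^{-\beta x}$, so you would need
\[
\E\!\left[\indset{E_n}\,D_n\right]\ \lesssim\ \varepsilon\, e^{-\beta B}\, x\, e^{(\beta-1)x}
\]
uniformly for $b\geq B$, in particular at $b=B$. The decay in $b$ that your moment bound can realistically produce (from ballot estimates on the post-split path, or from the overshoot of the spine trajectory, as in Fact~\ref{fact:splitting}) is at best polynomial in $b$; it cannot cancel an exponentially growing prefactor $e^{\beta B}$. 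So with $\delta=e^{-\beta B}$ the scheme collapses. You would have to take $\delta$ only polynomially small in $B$, which then forces a correspondingly sharp and explicit moment estimate with simultaneous polynomial decay in $b$ --- this is considerably more delicate than Lemmas~\ref{lem:ajouteE}--\ref{lem:ajouteL}, and your sketch (bounding $\indset{E_n}$ by a many-to-two sum over candidate leftmost particles, etc.) does not establish it. In short, the $\delta$-splitting template that works for Lemmas~\ref{lem:ajouteE} and~\ref{lem:ajouteL} does not transfer here, because unlike those lemmas there is no ``spare'' exponential factor (in $\Delta$ or $L_0$) available to absorb $\delta^{-1}$: the lever you have is the generation cutoff $b$, whose effect is only polynomial. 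The spinal change of measure is precisely what lets the paper bypass this, by exploiting that the integrand is \emph{exactly} zero on $\xi_n$ rather than merely small.
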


Before giving the proof of Lemma~\ref{lem:compteB}, we state a result about the branching random walk under $\hat{\P}$. Recall that $(\omega_k; k\geq0)$ is the spine of $\mathbb{T}$. For any integer $b\geq0$, we define
\begin{equation}
\xi_n(z,L,b):=\{\forall k\leq n-b, \min_{u\geq \omega_k; |u|=n}V(u)\geq a_n(z)+L\}.
\end{equation}

\begin{fact}
\label{fact:splitting}
For any $\eta>0$ and $L>0$, there exists $K(\eta)>0$, $B(L,\eta)\geq1$ and $N(\eta)\geq1$ such that for any $b\geq B(L,\eta)\geq1$, $n\geq N(\eta)$ and $z\geq K\geq K(\eta)+L$,
\begin{equation}
\hat{\P}\left(\xi_n(z,L,b)^c, \omega_n\in J_{\lambda,z,K,L}(n)\right)\leq \eta (1+L)^2(1+z-K) n^{-3/2}.
\end{equation}
\end{fact}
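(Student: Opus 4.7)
\bigskip
\noindent\textbf{Proof plan.} The plan is to combine the spinal decomposition under $\hat{\P}$ with an Aidekon-type tail bound on the minimum of a branching random walk, and then to reduce everything to the random-walk estimates of Section \ref{subsec:rw}. Observe first that any non-spine descendant of $\omega_k$ at generation $n$ enters the tree through some sibling branch $v \in \Omega_{j+1}$ (i.e.\ $v$ is a non-spine child of $\omega_j$) for $k \leq j < n$. Writing $q_m(y) := \P(M_m \leq y)$ and conditioning on the $\sigma$-algebra $\mathcal{G}$ generated by the spine together with all sibling positions, a union bound gives
\[
  \hat{\P}\bigl(\xi_n(z,L,b)^c,\, \omega_n \in J_{\lambda,z,K,L}(n) \bigm| \mathcal{G}\bigr) \leq \ind{\omega_n \in J_{\lambda,z,K,L}(n)} \sum_{j=0}^{n-b-1} \sum_{v \in \Omega_{j+1}} q_{n-j-1}\bigl(a_n(z)+L-V(v)\bigr).
\]

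The second step is to insert the standard estimate $q_m(y) \leq C\bigl(1+(y-\tfrac{3}{2}\log m)_+\bigr)e^{-(y-\frac{3}{2}\log m)}$ (see e.g.\ \cite{Aid11,Mad11}) and take the expectation over the siblings. By definition of the biased reproduction law $\hat{\calL}$, the conditional sum $\sum_{v \in \Omega_{j+1}} \phi(V(v)-V(\omega_j))$ given the spine has expectation equal to $\psi(V(\omega_{j+1})-V(\omega_j))$ for an integrable function $\psi$ (finite under \eqref{eqn:integrability}). This reduces the quantity of interest to a sum of the form $\sum_{j=0}^{n-b-1} \hat{\E}\bigl[e^{V(\omega_j)-z+L}\, \Phi_j(V(\omega_\cdot))\, \ind{\omega_n \in J_{\lambda,z,K,L}(n)}\bigr]$ with an integrable spine functional $\Phi_j$.

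The third step is to split this sum over $j$ into the three ranges $[0,\sqrt n]$, $[\sqrt n,\lambda n]$ and $[\lambda n,n-b]$. Under $\hat{\P}$ the spine is a centered random walk constrained to stay above $-z+K$ and to end in the narrow window $[a_n(z+L),a_n(z)]$ of width $L$; applying Lemma \ref{cor:rw} in the first two ranges with barrier $z-K$ and terminal level $a_n(z+L)$ yields a contribution of order $\epsilon(K)(1+L)^2(1+z-K)n^{-3/2}$, where $\epsilon(K) \to 0$ as $K \to \infty$ thanks to the tail decay of $\int_K^{\infty} e^{-u}V^+(u)\,du$. In the third range the constraint forces $V(\omega_j)\geq a_n(z+L)$, so the subtree rooted at $v$ has at most $n-\lambda n$ steps to descend below $a_n(z)+L$, and the tail bound on $q_{n-j-1}$ contributes $\epsilon(b)(1+L)^2(1+z-K)n^{-3/2}$ with $\epsilon(b)\to 0$ as $b\to\infty$. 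The main obstacle is matching the precise prefactor $(1+L)^2(1+z-K)$: one $(1+L)$ comes from the width of the terminal window (extracted via Lemma \ref{lem:besselpont}), a second $(1+L)$ from integrating the weight $e^{V(v)}$ against the sibling point process after shifting by the spine, and the factor $(1+z-K) \asymp V^-(z-K)$ from the initial barrier. Taking $b = B(L,\eta)$ and then $K \geq K(\eta)+L$ large enough that $\epsilon(K)+\epsilon(b) \leq \eta$ concludes.
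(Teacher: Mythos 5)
The paper does not supply a proof of this Fact, stating only that it is a slight refinement of Lemma~3.8 (the ``peeling lemma'') in A\"{i}d\'{e}kon \cite{Aid11}, so there is no in-paper argument to compare against. Your plan correctly reconstructs the skeleton of A\"{i}d\'{e}kon's proof: condition on the spine and sibling positions, union-bound over the generation $j\leq n-b$ at which an offending particle splits off, bound each sibling subtree by the minimum-displacement tail $q_m(y)=\P(M_m\leq y)$, take the expectation over siblings using the tilted law $\widehat{\calL}$, and split the sum over $j$ into the regimes $j\leq\sqrt n$, $\sqrt n<j\leq\lambda n$, $\lambda n<j\leq n-b$, each controlled by the ballot-type estimates and Lemma~\ref{cor:rw}. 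The accounting of $(1+L)^2(1+z-K)$ through the terminal window, the sibling integration, and $V^-(z-K)$ is in the right spirit, and the tuning $\epsilon(K)+\epsilon(b)\leq\eta$ is how the statement's two free parameters $K(\eta)$ and $B(L,\eta)$ arise.

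Two cautions. The tail bound on $q_m$ as you wrote it has the signs reversed: since $y\mapsto q_m(y)$ is nondecreasing and tends to a positive constant as $y\to+\infty$, it cannot be dominated by $C\bigl(1+(y-\tfrac{3}{2}\log m)_+\bigr)e^{-(y-\frac{3}{2}\log m)}$, which vanishes as $y\to+\infty$ and blows up as $y\to-\infty$. The usable estimate (from \cite{Aid11} or \cite{Mad11}) is $q_m(y)\leq C\bigl(1+(\tfrac{3}{2}\log m-y)_+\bigr)e^{\,y-\frac{3}{2}\log m}$, applied for $y\leq\tfrac{3}{2}\log m$. Second, the conditional expectation of $\sum_{v\in\Omega_{j+1}}q_{n-j-1}(a_n(z)+L-V(v))$ given the spine is not a function of $V(\omega_{j+1})-V(\omega_j)$ alone: the argument of $q$ involves $V(\omega_j)$ through $a_n(z)+L-V(\omega_j)$, and the joint law of $\bigl(V(\omega_{j+1})-V(\omega_j),\,(V(v)-V(\omega_j))_{v}\bigr)$ under $\widehat{\calL}$ with size-biased selection of the spine child does not factor this way. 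The standard fix, which is where assumption \eqref{eqn:integrability} actually enters, is to pull out the exponential weight $e^{V(\omega_j)-a_n(z)-L}$ and dominate what remains of the sibling sum by a spine-independent random variable whose moments are finite under $\E[X(\log_+X)^2]+\E[\tilde X\log_+\tilde X]<\infty$; this is the step that deserves more explicit treatment than the sketch gives.
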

Fact~\ref{fact:splitting} is a slight refinement of Lemma 3.8 in~\cite{Aid11}, so we feel free to omit its proof. Using this result, we prove Lemma~\ref{lem:compteB} as follows.

\begin{proof}
As $\widehat{\mu}^{L_0}_{n,k,\beta}(F)\leq\widetilde{\mu}^{L_0}_{n,\beta}(F)$, we have $\widehat{\Sigma}^{L_0}_{E_n}(n,n-b,x,F)-\Sigma^{L_0}_{E_n}(n,x,F)\geq0$. We also observe that
\begin{equation*}
\widehat{\Sigma}^{L_0}_{E_n}(n,n-b,x,F)-\Sigma^{L_0}_{E_n}(n,x,F)
  =\E\left[\exp\left(-e^{-\beta x}\widehat{\mu}^{L_0}_{n,n-b,\beta}(F)\right)-\exp\left(-e^{-\beta x}\widetilde{\mu}^{L_0}_{n,\beta}(F)\right); E_n\right].
\end{equation*}
By change of measures, we have
\begin{align*}
  &\widehat{\Sigma}^{L_0}_{E_n}(n,n-b,x,F)-\Sigma^{L_0}_{E_n}(n,x,F)\\
  = & \hat{\E}\left[\frac{\exp\left(-e^{-\beta x}\widehat{\mu}^{L_0}_{n,n-b,\beta}(F)\right)-\exp\left(-e^{-\beta x}\widetilde{\mu}^{L_0}_{n,\beta}(F)\right)}{W_n}; m^{(n)}\in J_{\lambda,x-\Delta,K_0,L_0}(n)\right]\\
  = & \hat{\E}\left[\frac{e^{V(\omega_n)}\ind{V(\omega_n)=M_n, \omega_n\in J_{\lambda,x-\Delta,K_0,L_0}(n)}}{\sum_{|u|=n}\ind{V(u)=M_n}}\left[\exp\left(-e^{-\beta x}\mathring{\mu}^{L_0}_{n,n-b,\beta}(F)\right)-\exp\left(-e^{-\beta x}\widetilde{\mu}^{L_0}_{n,\beta}(F)\right)\right]\right],
\end{align*}
where 
\[\mathring{\mu}^{L_0}_{n,k,\beta}(F):=n^{3\beta/2}\sum_{u\in J_{\lambda,x-\Delta,K_0,L_0}^{L_0}(n)}\ind{u\geq \omega_k}e^{-\beta V(u)}F(H_n(u)).
\]

Observe that $0\leq \exp\{-e^{-\beta x}\mathring{\mu}^{L_0}_{n,n-b,\beta}(F)\}-\exp\{-e^{-\beta x}\widetilde{\mu}^{L_0}_{n,\beta}(F)\}\leq1$. Moreover, on the event $\xi_n(x-\Delta,L_0,b)$, we have $\mathring{\mu}^{L_0}_{n,n-b,\beta}(F)=\widetilde{\mu}^{L_0}_{n,\beta}(F)$. Therefore,
\begin{align}
  \widehat{\Sigma}^{L_0}_{E_n}(n,n-b,x,F)-\Sigma^{L_0}_{E_n}(n,x,F)
  \leq&\hat{\E}\left[\frac{e^{V(\omega_n)}\ind{V(\omega_n)=M_n, \omega_n\in J_{\lambda,x-\Delta,K_0,L_0}(n)}}{\sum_{|u|=n}\ind{V(u)=M_n}}; \xi_n^c(x-\Delta,L_0,b)\right]\nonumber\\
  \leq &\hat{\E}\left[e^{V(\omega_n)}\ind{V(\omega_n)=M_n, \omega_n\in J_{\lambda,x-\Delta,K_0,L_0}(n)}; \xi_n^c(x-\Delta,L_0,b)\right]\nonumber\\
\leq& n^{3/2} e^{-x+\Delta}\hat{\P}\left(\xi_n^c(x-\Delta,L_0,b), \omega_n\in J_{\lambda,x-\Delta,K_0,L_0}(n)\right).\label{eq:ajoutezeta}
\end{align}
Applying Fact~\ref{fact:splitting} to $\eta=\varepsilon e^{-\Delta}/(1+L_0)^2$ shows that
\begin{equation}
  \widehat{\Sigma}^{L_0}_{E_n}(n,n-b,x,F)-\Sigma^{L_0}_{E_n}(n,x,F)\leq \varepsilon x e^{-x},
\end{equation}
holds for $n\geq N(\eta)$, $b\geq B(L_0, \eta)$ and $x>K_0\geq K(\eta)+L_0$, which ends the proof.
\end{proof}

We now study $\hat{\Sigma}^{L_0}_{E_n}(n,n-b,x,F)$, to prove Proposition~\ref{tailprop}. We begin with the following estimate, which brings out the Brownian excursion.
\begin{lemma}
\label{lem:indep}For any $\varepsilon>0$, set $\Delta=\Delta_\varepsilon:=\Delta_{\varepsilon,1}\vee \Delta_{\varepsilon,2}$ and $L_0=\frac{2\Delta}{\alpha_1\wedge\alpha_2}$. Let $K=K_{\varepsilon,L_0}>0$, $B=B_{\varepsilon,L_0}\geq1$ and $N=N_{\varepsilon,L_0}\geq1$ as in Lemma~\ref{lem:compteB}. For all $K_0\geq K+L_0$, $n\geq N$ and $b\geq B$, there exists $n_\varepsilon\geq N$ such that for all $n\geq n_\varepsilon$ and $x\geq 2e^{K_0+\Delta}/\varepsilon$,
\begin{equation}
  \left\vert\widehat{\Sigma}^L_{E_n}(n,n-b,x,F)-\E\left[\exp\left(-e^{-\beta x}\tilde{\mu}_{n,\beta}(1) F(\epsilon)\right)\right]\right\vert\leq \varepsilon xe^{-x}.
\end{equation}
\end{lemma}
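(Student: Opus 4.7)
The plan has two stages, both guided by the observation that every individual $u\geq m^{(n)}_{n-b}$ shares its genealogy with $m^{(n)}$ up to generation $n-b$. Stage~1 uses uniform continuity of $F$ to factor $F(H_n(m^{(n)}))$ out of the sum defining $\widehat{\mu}^{L_0}_{n,n-b,\beta}(F)$. Stage~2 replaces $H_n(m^{(n)})$ by an independent normalised Brownian excursion $e_1$, combining the Brownian-excursion limit of the leftmost path (Chen \cite{Che14+}) with the constant-$F$ versions of Lemmas \ref{lem:ajouteE}, \ref{lem:ajouteL}, \ref{lem:compteB} to restore the full sum $\tilde{\mu}_{n,\beta}(1)$. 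The main technical difficulty lies in Stage~2.

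\emph{Stage 1.} For $u\geq m^{(n)}_{n-b}$ one has $H_n(u)(t)=H_n(m^{(n)})(t)$ for $t\leq 1-b/n$. On the remaining slice $[1-b/n,1]$, both trajectories branch off from $V(m^{(n)}_{n-b})$ via at most $b$ BRW increments; since $b$ is fixed, the rescaled displacements $(V(u_k)-V(m^{(n)}_{n-b}))/\sqrt{\sigma^2 n}$ are uniformly $O_{\P}(1/\sqrt n)$ in $u$ (on a high-probability sub-event of $E_n$ whose complement costs $o(xe^{-x})$ by the ballot bound \eqref{eq:elementary}). Uniform continuity of $F$ then supplies $\eta_n\downarrow 0$ such that $|F(H_n(u))-F(H_n(m^{(n)}))|\leq \eta_n$ for every relevant $u$. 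Combined with $|e^{-a}-e^{-b}|\leq|a-b|$ and the many-to-one bound $e^{-\beta x}\E[\widehat{\mu}^{L_0}_{n,n-b,\beta}(1)\indset{E_n}]=O(xe^{-x})$ (derived from Lemma \ref{lem:manytoone} and Lemma \ref{cor:rw}), this yields
\[
  \Bigl|\widehat{\Sigma}^{L_0}_{E_n}(n,n-b,x,F)-\E\bigl[\exp\bigl(-e^{-\beta x}F(H_n(m^{(n)}))\,\widehat{\mu}^{L_0}_{n,n-b,\beta}(1)\indset{E_n}\bigr)\bigr]\Bigr|\leq \tfrac{\varepsilon}{2}xe^{-x}
\]
for $n$ large.

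\emph{Stage 2.} Partition $F(\calD)\subset[0,\|F\|_\infty]$ into Borel cells $\{B_i\}_{i\leq N_\delta}$ of diameter at most $\delta$, with representatives $c_i\in B_i$. Replacing $F(H_n(m^{(n)}))$ by $\sum_i c_i\indset{F(H_n(m^{(n)}))\in B_i}$ (the error being controlled via the $1$-Lipschitz property of $s\mapsto e^{-s}$), the lemma reduces to showing, for each $i$,
\[
  \E\bigl[\indset{F(H_n(m^{(n)}))\in B_i}\exp\bigl(-c_ie^{-\beta x}\widehat{\mu}^{L_0}_{n,n-b,\beta}(1)\indset{E_n}\bigr)\bigr]=\P(F(e_1)\in B_i)\,\Sigma(n,x,c_i)+o(xe^{-x}).
\]
Chen's theorem yields $\P(F(H_n(m^{(n)}))\in B_i)\to\P(F(e_1)\in B_i)$, and Lemmas \ref{lem:ajouteE}--\ref{lem:compteB} applied with the constant test function $c_i$ yield $\widehat{\Sigma}^{L_0}_{E_n}(n,n-b,x,c_i)=\Sigma(n,x,c_i)+O(\varepsilon xe^{-x})$ uniformly in $c_i\in[0,\|F\|_\infty]$. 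The \textbf{main obstacle} is to glue these two inputs into the joint statement, i.e.\ to show that the rescaled shape $H_n(m^{(n)})$ and the total mass $\tilde{\mu}_{n,\beta}(1)$ are asymptotically independent. The natural route is a spinal-decomposition calculation under $\hat{\P}$: on $E_n$ the spine $\omega$ can be coupled with $m^{(n)}$, its rescaled trajectory converges to $e_1$ by Lemma \ref{lem:rw}, and the branches grafted along $\omega$ contribute to $\tilde{\mu}_{n,\beta}(1)$ only through their terminal heights, which decorrelate from the excursion shape as $n\to\infty$. Summing in $i$ and letting $\delta\downarrow 0$ completes the argument.
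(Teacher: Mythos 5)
Your two-stage outline has the right shape, and Stage~1 — factoring $F(H^n(\cdot))$ out by uniform continuity since all $u\geq m^{(n)}_{n-b}$ share their ancestral path up to time $n-b$ — is essentially what the paper does, although the paper works with the truncated spine trajectory $\widetilde{H}^n(\omega_n)$ under $\hat{\P}$ and takes $X_{F,\varepsilon_0}=F(\widetilde{H}^n(\omega_n))\vee\varepsilon_0$ (the $\vee\varepsilon_0$ matters later for integrability of the Laplace-transform integrand). The problem is Stage~2. You explicitly label the asymptotic independence of $H^n(m^{(n)})$ and $\tilde{\mu}_{n,\beta}(1)$ as ``the main obstacle,'' and then you do not resolve it: the sentence ``the branches grafted along $\omega$ contribute to $\tilde{\mu}_{n,\beta}(1)$ only through their terminal heights, which decorrelate from the excursion shape as $n\to\infty$'' is precisely the assertion you need to prove, and it is not a priori obvious — the position of the spine at time $n-b$ determines the overall size of $\widehat{\mu}^{L_0}_{n,n-b,\beta}(1)$, and that position is of course a functional of the shape of the excursion. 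So there is genuine correlation in finite $n$; what is true, and what the paper proves, is that this correlation washes out in the limit and in a very precise quantitative form.

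The paper's mechanism for resolving this is the joint limit in Lemma~\ref{lem:rw} (together with Lemma~\ref{cor:rw}): the quantity
\[
  n^{3/2}\,\E\left[F(\ns)\,f(S_n-y);\ \mS_n\geq -a,\ \mS_{[\lambda n,n]}\geq y\right]
  \ \longrightarrow\ C_1\,V^-(a)\,\E\left[F(e_1)\right]\int f(z)V^+(z)\,dz,
\]
i.e.\ the \emph{product} structure $\E[F(e_1)]\times(\text{functional of the endpoint})$ is itself the content of that lemma and required the Bessel-bridge analysis of Lemma~\ref{lem:besselpont}. To feed the excursion-shape/total-mass pair into Lemma~\ref{lem:rw} the paper first writes $1-e^{-\lambda W}=\int_0^\infty\lambda e^{-\lambda t}\ind{W\geq t}\,dt$ with $\lambda=X_{F,\varepsilon_0}$, applies the Markov property at time $n-b$ under $\hat{\P}$, and obtains an integrand of exactly the form $F(\ns)f(S_{n-b}-y)$ to which the lemma applies; dominated convergence in $y$ then finishes the computation. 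Your replacement device — discretise the range of $F$ into cells $B_i$, invoke Chen's convergence of the leftmost trajectory for $\P(F(H^n(m^{(n)}))\in B_i)$, and apply Lemmas~\ref{lem:ajouteE}--\ref{lem:compteB} with constant test functions $c_i$ — only gives the two \emph{marginal} limits; it does not couple them, and you would still have to prove the product formula, which is the same Lemma~\ref{lem:rw} calculation in disguise. So the proposal is a correct high-level description of the strategy, but the actual proof of the decorrelation, which is the heart of the lemma, is missing.
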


\begin{proof}
By change of measures, we have
\begin{align*}
  \widehat{\Sigma}^{L_0}_{E_n}(n,n-b,x,F)
  &=\E\left[\exp\left(-e^{-\beta x}\widehat{\mu}^{{L_0}}_{n,n-b,\beta}(F)\indset{E_n}\right)\right]\\
  &=\hat{\E}\left[ \frac{e^{V(\omega_n)}\ind{V(\omega_n)=M_n, \omega_n\in J_{\lambda,x-\Delta,K_0,L_0}(n)}}{\sum_{|u|=n}\ind{V(u)=M_n}}\exp\left(-e^{-\beta x}\mathring{\mu}^{L_0}_{n,n-b,\beta}(F)\right)\right] + \P(E_n^c).
\end{align*}
First, we are going to compare it with $\E\left[\exp\left(-e^{-\beta x}\widehat{\mu}^{L_0}_{n,n-b,\beta}(1) F(\epsilon)\mathbf{1}_{E_n}\right)\right]$, which equals to
\[
\hat{\E}\left[ \frac{e^{V(\omega_n)}\ind{V(\omega_n)=M_n, \omega_n\in J_{\lambda,x-\Delta,K_0,L_0}(n)}}{\sum_{|u|=n}\ind{V(u)=M_n}}\exp\left(-e^{-\beta x}\mathring{\mu}^{L_0}_{n,n-b,\beta}(1)F(\epsilon)\right)\right] + \P(E_n^c).
\]
The strategy is to show that 
\begin{align*}
\widehat{\Upsilon}^{L_0}_{E_n}(n,n-b,x,F):=&\widehat{\Sigma}^{L_0}_{E_n}(n,n-b,x,F)-\P(E_n^c)\\
\textrm{ and }\widehat{\Upsilon}^{L_0}_{E_n}(n,n-b,x,F(\epsilon)):=&\E\left[\exp\left(-e^{-\beta x}\widehat{\mu}^{L_0}_{n,n-b,\beta}(1) F(\epsilon)\mathbf{1}_{E_n}\right)\right]-\P(E_n^c)
\end{align*}
are both close to the same quantity as $n\rightarrow\infty$. Then we compare \[\E\left[\exp\left(-e^{-\beta x}\widehat{\mu}^{L_0}_{n,n-b,\beta}(1) F(\epsilon)\mathbf{1}_{E_n}\right)\right]\textrm{  with  } \E\left[\exp\left(-e^{-\beta x}\tilde{\mu}_{n,\beta}(1) F(\epsilon)\right)\right].\]
We set
\begin{equation}
  Z:=\frac{e^{V(\omega_n)}\ind{V(\omega_n)=M_n, \omega_n\in J_{\lambda,x-\Delta,K_0,L_0}(n)}}{\sum_{|u|=n}\ind{V(u)=M_n}}
  \quad \mathrm{and} \quad
  Z_b:=\frac{e^{V(\omega_n)}\ind{V(\omega_n)=M_n, \omega_n\in J_{\lambda,x-\Delta,K_0,L_0}(n)}}{\sum_{|u|=n}\ind{V(u)=M_n, u\geq \omega_{n-b}}},
\end{equation}
so that $\widehat{\Upsilon}^{L_0}_{E_n}(n,n-b,x,F)=\hat{\E}\left[Ze^{-e^{-\beta x}\mathring{\mu}^{L_0}_{n,n-b,\beta}(F)}\right]$.

Under the measure $\hat{\P}$, on the set $\xi_n(x-\Delta,{L_0},b)$, we have $Z=Z_b$, thus
\begin{multline}\label{eq:upsilon}
  \widehat{\Upsilon}^{L_0}_{E_n}(n,n-b,x,F)\\
  =\hat{\E}\left[Z_be^{-e^{-\beta x}\mathring{\mu}^{L_0}_{n,n-b,\beta}(F)}; \xi_n(x-\Delta, L_0,b)\right]+\hat{\E}\left[Z\exp\left(-e^{-\beta x}\mathring{\mu}^{L_0}_{n,n-b,\beta}(F)\right);\xi_n^c(x-\Delta,{L_0},b)\right].
\end{multline}

Recall that under $\hat{\P}$, we have
\[ \mathring{\mu}^{L_0}_{n,n-b,\beta}(F)=n^{3\beta/2}\sum_{u\in J_{\lambda,x-\Delta,K_0,L_0}^{{L_0}}(n)}\ind{u\geq \omega_{n-b}}e^{-\beta V(u)}F(H^n(u)). \]
For $n\gg b$ large and $|u|=n$,  we define the path $\widetilde{H}^n_s(u) = \frac{V(u_{\floor{ ns \wedge (n-b)}})}{\sigma\sqrt{n}},\forall s\in[0,1]$. Observe that, for all $u\geq\omega_{n-b}$, $\tilde{H}^n(u)$ is identical to $\tilde{H}^n(\omega_n)$. For
 all $\varepsilon_0>0$, let
\begin{equation}
  X_{F,\varepsilon_0}:=F\left(\widetilde{H}^n_s(\omega_n); s\in[0,1]\right)\vee\varepsilon_0.
\end{equation}

We prove that $\widehat{\Upsilon}^{L_0}_{E_n}(n,n-b,x,F)$ is close to $\hat{\E}\left[Z_b \exp\left(-e^{-\beta x}\mathring{\mu}^{L_0}_{n,n-b,\beta}(1)\times X_{F,\varepsilon_0}\right)\right]$. It follows from \eqref{eq:upsilon} that
\begin{multline}
\label{eq:ChangeF}
  \left|\widehat{\Upsilon}^{L_0}_{E_n}(n,n-b,x,F)-\hat{\E}\left[Z_b \exp\left(-e^{-\beta x}\mathring{\mu}^{L_0}_{n,n-b,\beta}(1)\times X_{F,\varepsilon_0}\right)\right]\right|\\
\leq \left|\hat{\E}\left[Z_b\left( \exp\left(-e^{-\beta x}\mathring{\mu}^{L_0}_{n,n-b,\beta}(F)\right)- \exp\left(-e^{-\beta x}\mathring{\mu}^{L_0}_{n,n-b,\beta}(1)\times X_{F,\varepsilon_0}\right) \right); \xi_n(x-\Delta, L_0, b) \right] \right|\\
\hat{\E}\left[Z\exp\left(-e^{-\beta x}\mathring{\mu}^{L_0}_{n,n-b,\beta}(1)\times X_{F,\varepsilon_0}\right)+Z_b\exp\left(-e^{-\beta x}\mathring{\mu}^{L_0}_{n,n-b,\beta}(F)\right);\xi_n^c(x-\Delta,{L_0},b)\right].
\end{multline}
As $|0\leq Z\leq Z_b \leq e^{V(\omega_n)}\ind{V(\omega_n) = M_n, \omega_n \in J_{\lambda, x-\Delta,K_0,L_0}(n)}$, by \eqref{eq:ajoutezeta} and Fact~\ref{fact:splitting} applied to $\eta=\frac{\varepsilon e^{\Delta}}{2(1+L_0)^2}$, this quantity is bounded from above by
\begin{equation}
\label{eqn:gamma}
  \left|\hat{\E}\left[Z_b\left( \exp\left(-e^{-\beta x}\mathring{\mu}^{L_0}_{n,n-b,\beta}(F)\right)- \exp\left(-e^{-\beta x}\mathring{\mu}^{L_0}_{n,n-b,\beta}(1)\times X_{F,\varepsilon_0}\right)\right); \xi_n(x-\Delta,L_0,b)\right]\right|+\varepsilon x e^{-x}.
\end{equation}

It remains to bound the first term of \eqref{eqn:gamma}. We compare $\mathring{\mu}^{L_0}_{n,n-b,\beta}(F)$ with $\mathring{\mu}^{L_0}_{n,n-b,\beta}(1)\times X_{F,\varepsilon_0}$, by comparing $F(\widetilde{H}^n_s(u); s\in[0,1])$ with $F(H^{(n)}_s(u); s\in[0,1])$.

Recall that $F$ is a continuous function on $D[0,1]$. Let us consider a relative compact $A\subset D[0,1]$ which means that 
\[
\max_{f\in A}\max_{0\leq x\leq 1}|f(x)|\leq M, \quad\limsup_{\delta\downarrow0}\sup_{f\in A}w(f,\delta)=0,
\]
where $M>0$ and $w(f,\delta):=\inf_{(t_i)}\max_{t_{i-1}\leq s\leq t<t_i} |f(s)-f(t)|$ with $0=t_0<t_1<\cdots<t_K=1$ and $\max(t_i-t_{i-1})\geq \delta$ is the continuous modulus of $D$. Then $F$ is uniformly continuous in $A$.

According to the weak convergence in $D[0,1]$ obtained in Lemma \ref{lem:rw}, for any $\varepsilon_0>0$, there exists a compact $K_f\subset D[0,1]$ such that for $n\geq n(f,K_f,\varepsilon_0)$,
\[
\sup_{y\in[0,r_n]}n^{3/2}\E\left[\ind{\ns\notin K_f} f(S_n-y); \mS_n\geq0, \mS_{[\lambda n, n]}\geq y\right]\leq \varepsilon_0.
\]
Similarly, by Lemma \ref{cor:rw}, one could find a compact $K'_{f}$ such that for all $a\in[0,r_n]$ and all $n$ sufficiently large,
\[
\sup_{y\in[0,r_n]}n^{3/2}\E\left[\ind{\ns\notin K'_{f}} f(S_n-y); \mS_n\geq-a, \mS_{[\lambda n, n]}\geq y\right]\leq \varepsilon_0 V^-(a).
\]

Now we take two compacts $\mathcal{K}_{1}=K'_{\ind{[0,2L_0]}}$ and $\mathcal{K}_2=K'_{G_{L_0,b}}$ with $G_{L_0,b}$ given below. Then $\mathcal{K}=\mathcal{K}_1\cup \mathcal{K}_2$ is also a compact. 
Since $F$ is uniformly continuous in the compact $\mathcal{K}$, for $\varepsilon_0>0$, there exists $\delta_0>0$ such that on the set $\{\max_{n-b\leq k\leq n}V(u_k)\leq \delta_0\sqrt{n}\}\cap\{\widetilde{H}^n(\omega_n)\in \mathcal{K}\}\cap\{H^n(u)\in \mathcal{K}\}$,
\[
\vert F(\widetilde{H}^n_s(u); s\in[0,1])-F(H^{(n)}_s(u); s\in[0,1])\vert\leq \varepsilon_0.
\]
And on the complement set, $\vert F(\widetilde{H}^n_s(u); s\in[0,1])-F(H^{(n)}_s(u); s\in[0,1])\vert\leq 1$ as $0\leq F\leq1$. One then observes that
\begin{align}\label{eq:changeF}
  &\left|\hat{\E}\left[Z_b\left( \exp\left(-e^{-\beta x}\mathring{\mu}^{L_0}_{n,n-b,\beta}(F)\right)- \exp\left(-e^{-\beta x}\mathring{\mu}^{L_0}_{n,n-b,\beta}(1)\times X_{F,\varepsilon_0}\right)\right); \xi_n(x-\Delta,L_0,b)\right]\right|\nonumber\\
  &\leq\hat{\E}\left[Z_b e^{-\beta x}\left\vert\mathring{\mu}^{L_0}_{n,n-b,\beta}(F)-\mathring{\mu}^{L_0}_{n,n-b,\beta}(1)\times X_{F,\varepsilon_0}\right\vert; \xi_n(x-\Delta,L_0,b)\right]\nonumber\\
  &\leq\Upsilon^\star_n+\Upsilon_{n,\mathcal{K},\omega_n}+\Upsilon_{n,\mathcal{K}}(\xi_n)
  \end{align}
  where 
  \begin{align*}
\Upsilon_{n,\mathcal{K}}(\xi_n):=&2\hat{\E}\left[Z_be^{-\beta x} n^{3\beta/2}\sum_{u\in J_{\lambda,x-\Delta,K_0,L_0}^{{L_0}}(n)}\ind{u\geq \omega_{n-b}}e^{-\beta V(u)}\ind{H^n(u)\notin \mathcal{K}}; \xi_n(x-\Delta,L_0,b)\right],\\
\Upsilon^\star_{n}:=&2\hat{\E}\left[Z_be^{-\beta x} n^{3\beta/2}\sum_{u\in J_{\lambda,x-\Delta,K_0,L_0}^{{L_0}}(n)}\ind{u\geq \omega_{n-b}}e^{-\beta V(u)}\left(\varepsilon_0+\ind{\max_{n-b\leq k\leq n}V(u_k)\geq \delta_0\sqrt{n}}\right)\right],\\
\Upsilon_{n,\mathcal{K},\omega_n}:=&2\hat{\E}\left[Z_be^{-\beta x} n^{3\beta/2}\sum_{u\in J_{\lambda,x-\Delta,K_0,L_0}^{{L_0}}(n)}\ind{u\geq \omega_{n-b}}e^{-\beta V(u)}\ind{\widetilde{H}^n(\omega_n)\notin\mathcal{K}}\right]
\end{align*}
Note that $\Upsilon^\star_n$ by the Markov property at time $n-b$ is equal to
\begin{multline}
2n^{3/2}e^{-x+(\beta-1)({L_0}-\Delta)}\hat{\E}\Big[G_{{L_0},b,\varepsilon_0}\left(V(\omega_{n-b})-a_n(z+L_0)\right);\\
\min_{0\leq k\leq n-b}V(\omega_k)\geq-z+K_0, \min_{\lambda n \leq k\leq n-b}V(\omega_k)\geq a_n(z+L_0)\Big]
\end{multline}
where $G_{L_0,b,\varepsilon_0}(t)$ is defined as
\begin{multline}
\hat{\E}_t\Bigg[\frac{e^{V(\omega_b)}\ind{V(\omega_b)=M_b;\min_{0\leq k\leq b}V(\omega_k)\geq 0, V(\omega_b)\leq {L_0}}}{\sum_{|u|=b}\ind{V(u)=M_b}}\sum_{|u|=b}e^{-\beta V(u)} \ind{\min_{0\leq k\leq b}V(u_k)\geq0, V(u)\leq 2{L_0}}\times\\
\left(\varepsilon_0+\ind{\max_{0\leq k\leq b}V(u_k)\geq \delta_0\sqrt{n}}\right)\Bigg].
\end{multline}
To bound $G_{{L_0},b,\varepsilon_0}(t)$, we return to the probability $\P$ and observe that
\begin{multline*}
  G_{{L_0},b,\varepsilon_0}(t)
  = e^{(1-\beta) t}\E\Bigg[\sum_{|u|=b} e^{-\beta V(u)}\ind{\min_{0\leq k\leq b}V(u_k)\geq-t, V(u)\leq 2{L_0}-t} \\
\times\left(\varepsilon_0+\ind{\max_{0\leq k\leq b}V(u_k)\geq \delta_0\sqrt{n}}\right)\ind{\min_{0\leq k\leq b}V(m_k^{(b)})\geq -t, V(m^{(b)})\leq {L_0}-t}\Bigg],
\end{multline*}
which is bounded by
\begin{equation*}
 e^{(1-\beta) t}\E\left[\sum_{|u|=b}e^{-\beta V(u)}\ind{\min_{0\leq k\leq b}V(u_k)\geq-t, V(u)\leq 2{L_0}-t}\left(\varepsilon_0+\ind{\max_{0\leq k\leq b}V(u_k)\geq \delta_0\sqrt{n}}\right)\right].
\end{equation*}
By Many-to-one lemma,
\begin{align*}
  G_{{L_0},b,\varepsilon_0}(t)
  &\leq \E_t\left(e^{(1-\beta)S_b}(\varepsilon_0+\ind{\max_{0\leq k\leq b}S_k\geq \delta_0\sqrt{n}}); S_b\leq 2{L_0}, \mS_b\geq0\right)\\
  &\leq 2\P_t(S_b\leq 2{L_0}) \leq 2\P(2{L_0}-S_b\geq t).
\end{align*}
We observe that the function $t \mapsto \P(2{L_0}-S_b \geq t)$ is non-increasing, and
\[ \int_0^{\infty} t \P(2{L_0}-S_b \geq t) dt \leq \frac{1}{2}\E((2{L_0}-S_b)^2) < \infty. \]
Using the dominated convergence theorem, we have
\begin{equation}
  \lim_{n\to \infty} \int_{\R_+}G_{{L_0},b,\varepsilon_0}(t)tdt \leq \frac{1}{2}\E((2{L_0}-S_b)^2)\varepsilon_0.
\end{equation}

Moreover, the function $G_{{L_0},b,\varepsilon_0}$ is Riemann-integrable. Therefore, using Lemma~\ref{lem:rw} proves that for  all $n$ sufficiently large,
\begin{equation*}
\Upsilon^\star_n
\leq c_{16}\varepsilon_0xe^{-x+(\beta-1)(L_0-\Delta)}.
\end{equation*}
Similarly, one sees that $\Upsilon_{n,K,\omega_n}$ is equal to
\begin{multline*}
2n^{3/2}e^{-x+(\beta-1)({L_0}-\Delta)}\hat{\E}\Big[G_{{L_0},b}\left(V(\omega_{n-b})-a_n(z+L_0)\right);\\
H^n(\omega_{n-k})\notin \mathcal{K},
\min_{0\leq k\leq n-b}V(\omega_k)\geq-z+K_0, \min_{\lambda n \leq k\leq n-b}V(\omega_k)\geq a_n(z+L_0)\Big],
\end{multline*}
where 
$G_{L_0,b}(t)$ as
\[
 \hat{\E}_t\Bigg(\frac{e^{V(\omega_b)}\ind{V(\omega_b)=M_b;\min_{0\leq k\leq b}V(\omega_k)\geq 0, V(\omega_b)\leq {L_0}}}{\sum_{|u|=b}\ind{V(u)=M_b}}
 \sum_{|u|=b}e^{-\beta V(u)} \ind{\min_{0\leq k\leq b}V(u_k)\geq0, V(u)\leq 2{L_0}}\Bigg).
\]
Similarly to $G_{L_0,b,\varepsilon_0}$, $G_{L_0,b}$ is Riemann-integrable and $\int_{\R_+}G_{{L_0},b}(t)tdt \leq \frac{1}{2}\E((2{L_0}-S_b)^2)$. So Lemma \ref{cor:rw} can be applied and we could find the compact $K'_{G_{L_0,b}}\subset \mathcal{K}$ as desired. Consequently, for all $n$ sufficiently large,
\[
\Upsilon_{n,\mathcal{K}, \omega_n}\leq 2\varepsilon_0 V^-(x)e^{-x+(\beta-1)(L_0-\Delta)}. 
\]
It remains to bound $\Upsilon_{n,\mathcal{K}}(\xi_n)$. Observe that by change of measures,
\begin{align*}
&\Upsilon_{n,\mathcal{K}}(\xi_n)\leq 2\hat{\E}\left[Ze^{-\beta x} n^{3\beta/2}\sum_{|u|=n}\ind{u\in J_{\lambda,x-\Delta,K_0,L_0}^{{L_0}}(n)}e^{-\beta V(u)}\ind{H^n(u)\notin \mathcal{K}}\right]\\
=&2 e^{-\beta x}n^{3\beta/2}\E\left[\sum_{|u|=n}\ind{u\in J_{\lambda,x-\Delta,K_0,L_0}^{{L_0}}(n)}e^{-\beta V(u)}\ind{H^n(u)\notin \mathcal{K}}; E_n\right]\\
\leq & 2 e^{-\beta x}n^{3\beta/2}\E\left[e^{(1-\beta)S_n}; \begin{array}{l} S_n\leq a_n(x-\Delta-L_0), \mS_n\geq -x+\Delta+K_0,\\ \mS_{[\lambda n, n]}\geq a_n(x-\Delta+L_0), \ns\notin\mathcal{K}\end{array}\right]\\
\leq & 2 e^{-x+(\beta-1)(L_0-\Delta)}\E\left[\ind{\ns\notin\mathcal{K}}; S_n-y\in [0, 2L_0], \mS_n\geq -x+\Delta+K_0, \mS_{[\lambda n, n]}\geq y \right]
\end{align*}
with $y=a_n(x-\Delta+L_0)$. As we choose $K'_{\ind{[0,2L_0]}}\subset \mathcal{K}$, it follows that
\[
\Upsilon_{n,\mathcal{K}}(\xi_n)\leq 2\varepsilon_0 V^-(x)e^{-x+(\beta-1)(L_0-\Delta)}.
\]
Applying these estimates to \eqref{eq:changeF}, for all $n$ large enough, we have
\begin{multline*}
\left|\hat{E}\left[Z_b\left( \exp\left(-e^{-\beta x}\mathring{\mu}^{L_0}_{n,n-b,\beta}(F)\right)- \exp\left(-e^{-\beta x}\mathring{\mu}^{L_0}_{n,n-b,\beta}(1)\times X_{F,\varepsilon_0}\right)\right); \xi_n(x-\Delta, L_0, b)\right]\right|\\
\leq 2c_{16}\varepsilon_0e^{(\beta-1)(L_0-\Delta)}xe^{-x}.
\end{multline*}
In view of \eqref{eq:ChangeF} and \eqref{eqn:gamma}, we can choose $\varepsilon_0>0$ sufficiently small so that 
\begin{equation}
\label{eq:controlF}
\left|\widehat{\Upsilon}^{L_0}_{E_n}(n,n-b,x,F)-\hat{\E}\left[Z_b \exp\left(-e^{-\beta x}\mathring{\mu}^{L_0}_{n,n-b,\beta}(1)\times X_{F,\varepsilon_0}\right)\right]\right|\leq 2\varepsilon xe^{-x}.
\end{equation}

In the similar way, we get that 
\begin{equation}
\label{eq:controlFe}
\left|\widehat{\Upsilon}^{L_0}_{E_n}(n,n-b,x,F(\epsilon))-\hat{\E}\left[Z_b\exp\Big(-e^{-\beta x}\mathring{\mu}^{L_0}_{n,n-b,\beta}(1)\times\Big(F(\epsilon)\vee\varepsilon_0\Big) \Big)\right]\right|\leq 2\varepsilon xe^{-x}.
\end{equation}

We now consider the quantity $\frac{e^x}{x}\hat{\E}\left[Z_b \exp\left(-e^{-\beta x}\mathring{\mu}^{L_0}_{n,n-b,\beta}(1)\times X_{F,\varepsilon_0}\right)\right]$ and show that it is close to $\frac{e^x}{x}\hat{\E}\left[Z_b\exp\left(-e^{-\beta x}\mathring{\mu}^{L_0}_{n,n-b,\beta}(1)\times\left(F(\epsilon)\vee\varepsilon_0\right) \right)\right]$. However, we can not compare these quantities directly, thus we prove that
\begin{multline*} \frac{e^x}{x}\hat{\E}\left[Z_b\left(1-\exp\left(-e^{-\beta x}\mathring{\mu}^{L_0}_{n,n-b,\beta}(1)\times X_{F,\varepsilon_0} \right)\right)\right]\\ \sim_{n \to \infty}
 \frac{e^x}{x}\hat{\E}\left[Z_b\left(1-\exp\left(-e^{-\beta x}\mathring{\mu}^{L_0}_{n,n-b,\beta}(1)\times \left(F(\epsilon)\vee\varepsilon_0\right) \right)\right)\right].
 \end{multline*}

Applying the equation
\[1-e^{-\lambda W}=\int_{\r_+}\lambda e^{-\lambda t}\ind{W\geq t}dt\]
with $\lambda=X_{F,\varepsilon_0}$ leads to 
\begin{multline}\label{eq:tireF}
  \frac{e^x}{x}\hat{\E}\left[Z_b\left(1-\exp\left(-e^{-\beta x}X_{F,\varepsilon_0}\mathring{\mu}^{L_0}_{n,n-b,\beta}(1) \right)\right)\right]
  = \int_{\r_+}\frac{e^x}{x}\hat{\E}\left[Z_b X_{F,\varepsilon_0}e^{-tX_{F,\varepsilon_0}}; \mathring{\mu}^{L_0}_{n,n-b,\beta}(1)\geq te^{\beta x}\right]dt \\
  =\int_{\r}\beta\frac{e^x}{x}\hat{\E}\left[Z_b e^{\beta y}X_{F,\varepsilon_0}e^{-e^{\beta y}X_{F,\varepsilon_0}}; \mathring{\mu}^{L_0}_{n,n-b,\beta}(1)\geq e^{\beta (x+y)}\right]dy,
\end{multline}
by change of variables $t=e^{\beta y}$. Applying the Markov property at time $n-b$ implies that
\begin{multline}
  \label{eqn:342}
  \frac{e^x}{x} \hat{\E}\left[ Z_b e^{\beta y} X_{F,\varepsilon_0} e^{-e^{\beta y} X_{F,\varepsilon_0}}; \mathring{\mu}^{L_0}_{n,n-b,\beta}(1)\geq e^{\beta (x+y)}\right]\\
  =n^{3/2} \frac{e^{\Delta}}{x}\hat{\E}\Bigg[e^{\beta y}X_{F,\varepsilon_0}e^{-e^{\beta y}X_{F,\varepsilon_0}}f_{{L_0},b}\left(V(\omega_{n-b})-a_n(z+{L_0}), y+\Delta\right);\\
\min_{0\leq k\leq n-b}V(\omega_k)\geq-z+K_0, \min_{\lambda n \leq k\leq n-b}\geq a_n(z+{L_0})\Bigg],
\end{multline}
where $z=x-\Delta$ and
\begin{multline}
  f_{L_0,b}(z,y):=\hat{\E}_z\Bigg[\frac{e^{V(\omega_b)-{L_0}}\ind{V(\omega_b)=M_b}}{\sum_{|u|=b}\ind{V(u)=M_b}}\ind{\min_{0\leq k\leq b}V(\omega_k)\geq0, V(\omega_b)\leq {L_0}}\\
\times \ind{\sum_{|u|=b}e^{-\beta V(u)} \ind{\min_{0\leq k\leq b}V(u_k)\geq0, V(u)\leq 2{L_0}}\geq e^{\beta(y-{L_0})}}\Bigg].
\end{multline}

According to Lemma 5.4 in~\cite{Mad11}, $f_{{L_0},b}$ is Riemann integrable and bounded by $\P(S_b\leq {L_0}-z)$. For all $y\in\r_+$ and $n\geq 10 b$, we have
\begin{equation}
\frac{e^x}{x}\hat{\E}\left[Z_b e^{\beta y}X_{F,\varepsilon_0}e^{-e^{\beta y}X_{F,\varepsilon_0}}; \mathring{\mu}^{L_0}_{n,n-b,\beta}(1)\geq e^{\beta (x+y)}\right]\leq c_{17} e^{\beta y}e^{-\varepsilon_0 e^{\beta y}},
\end{equation}
which is integrable with respect to the Lebesgue measure. Using \eqref{eqn:342}, then using Proposition~\ref{prop:spinaldecomposition} to identify $(V(\omega_k), k\geq 0)$ with a random walk and applying Lemma~\ref{lem:rw}, we obtain that for any $y\in\r$, as $n\to\infty$,
\begin{multline*}
  \lim_{n\rightarrow\infty}\frac{e^x}{x}\hat{\E}\left[Z_b e^{\beta y}X_{F,\varepsilon_0}e^{-e^{\beta y}X_{F,\varepsilon_0}}; \mathring{\mu}^{L_0}_{n,n-b,\beta}(1)\geq e^{\beta (x+y)}\right]\\
  = C_1 \frac{e^{\Delta} V^-(z-K_0)}{x}\int_{\r_+}f_{{L_0},b}(z,y+\Delta)V^+(z)dz \E\left[e^{\beta y}\left(F(\epsilon)\vee\varepsilon_0\right)e^{-e^{\beta y}\left(F(\epsilon)\vee\varepsilon_0\right)}\right].
\end{multline*}
By the exact same arguments, we have
\begin{multline*}
  C_1 \frac{e^{\Delta} V^-(z-K_0)}{x}\int_{\r_+}f_{{L_0},b}(z,y+\Delta)V^+(z)dz \E\left[e^{\beta y}\left(F(\epsilon)\vee\varepsilon_0\right)e^{-e^{\beta y}\left(F(\epsilon)\vee\varepsilon_0\right)}\right]\\
  =\lim_{n\rightarrow\infty}\frac{e^x}{x}\hat{\E}\left[Z_b e^{\beta y}\left(F(\epsilon)\vee\varepsilon_0\right)e^{-e^{\beta y}\left(F(\epsilon)\vee\varepsilon_0\right)}; \mathring{\mu}^{L_0}_{n,n-b,\beta}(1)\geq e^{\beta (x+y)}\right].
\end{multline*}

Therefore, applying the dominated convergence theorem, \eqref{eq:tireF} becomes
\begin{align}
  &\frac{e^x}{x}\hat{\E}\left[Z_b\left(1-\exp\left(-e^{-\beta x}X_{F,\varepsilon_0}\mathring{\mu}^{L_0}_{n,n-b,\beta}(1) \right)\right)\right]\nonumber\\
  &\qquad \qquad =\int_{\r}\beta\frac{e^x}{x}\hat{\E}\left[Z_b e^{\beta y}X_{F,\varepsilon_0}e^{-e^{\beta y}X_{F,\varepsilon_0}}; \mathring{\mu}^{L_0}_{n,n-b,\beta}(1)\geq e^{\beta (x+y)}\right]dy\nonumber\\
  &\qquad \qquad =\int_{\r}\beta\frac{e^x}{x}\hat{\E}\left[Z_b e^{\beta y}\left(F(\epsilon)\vee\varepsilon_0\right)e^{-e^{\beta y}\left(F(\epsilon)\vee\varepsilon_0\right)}; \mathring{\mu}^{L_0}_{n,n-b,\beta}(1)\geq e^{\beta (x+y)}\right]dy+o_n(1)\nonumber\\
  &\qquad \qquad =\frac{e^x}{x}\hat{\E}\left[Z_b\left(1-\exp\left(-e^{-\beta x}\left(F(\epsilon)\vee\varepsilon_0\right)\mathring{\mu}^{L_0}_{n,n-b,\beta}(1) \right)\right)\right]+o_n(1).
\label{eq:convergenceF}
\end{align}

Thus, we obtain that for all $n$ sufficiently large,
\begin{multline*}
\bigg|\frac{e^x}{x}\hat{\E}\left[Z_b \exp\left(-e^{-\beta x}\mathring{\mu}^{L_0}_{n,n-b,\beta}(1)\times X_{F,\varepsilon_0}\right)\right]\\
-\frac{e^x}{x}\hat{\E}\left[Z_b\exp\Big(-e^{-\beta x}\mathring{\mu}^{L_0}_{n,n-b,\beta}(1)\times\left(F(\epsilon)\vee\varepsilon_0\right) \Big)\right]\bigg|\leq \varepsilon.
\end{multline*}

In view of \eqref{eq:controlF} and \eqref{eq:controlFe}, we checked that for all $n$ sufficiently large,
\begin{equation}
\left|\widehat{\Upsilon}^{L_0}_{E_n}(n,n-b,x,F)-\widehat{\Upsilon}^{L_0}_{E_n}(n,n-b,x,F(\epsilon))\right|\leq 5\varepsilon xe^{-x}.
\end{equation}

It hence follows that
for all $n$ sufficiently large,
\begin{equation}
\left|\widehat{\Sigma}^{L_0}_{E_n}(n,n-b,x,F)-\E\left[\exp\left(-e^{-\beta x}\widehat{\mu}^{L_0}_{n,n-b,\beta}(1) F(\epsilon)\mathbf{1}_{E_n}\right)\right]\right|\leq 5\varepsilon xe^{-x}.
\end{equation}

It remains to compare $\E\left[\exp\left(-e^{-\beta x}\widehat{\mu}^{L_0}_{n,n-b,\beta}(1) F(\epsilon)\mathbf{1}_{E_n}\right)\right]$ with $\E\left[\exp\left(-e^{-\beta x}\widetilde{\mu}_{n,\beta} F(\epsilon)\right)\right]$ (recall that $\tilde{\mu}_{n,\beta}$ was defined in \eqref{eqn:defTildeMu}).

Applying Lemmas~\ref{lem:ajouteE},~\ref{lem:ajouteL} and~\ref{lem:compteB} to $\E\left(\exp\{-e^{-\beta x}\tilde{\mu}_{n,\beta}(1) F(\epsilon)\}\right)$ implies that
\begin{equation}
  0\leq\E\left[\exp\left(-e^{-\beta x}\widehat{\mu}^{L_0}_{n,n-b,\beta}(1) F(\epsilon)1_{E_n}\right)\right]-\E\left[\exp\left(-e^{-\beta x}\widetilde{\mu}_{n,\beta}(1) F(\epsilon)\right)\right]\leq 3\varepsilon x e^{-x}.
\end{equation}

As a consequence, for all $n$ sufficiently large,
\begin{equation}
\left|\widehat{\Sigma}^{L_0}_{E_n}(n,n-b,x,F)-\E\left[\exp\left(-e^{-\beta x}\widetilde{\mu}_{n,\beta} F(\epsilon)\right)\right]\right|\leq 8 \varepsilon xe^{-x},
\end{equation}
which completes the proof.
\end{proof}

We now prove Proposition~\ref{tailprop}.

\begin{proof}[Proof of Proposition~\ref{tailprop}]
For any non-negative $ F\in \mathcal{C}_b(\mathcal{D})$ and $\varepsilon>0$, we choose $\Delta=\Delta_\varepsilon:=\Delta_{\varepsilon,1}\vee \Delta_{\varepsilon,2}$ and $L=L_0=\frac{2\Delta}{\alpha_1\wedge\alpha_2}$. Set $K_0=K_{\varepsilon,L_0}+L_0$, $n\geq n_\varepsilon$ and $A\geq 2e^{K_0+\Delta}/\varepsilon$, we observe that
\begin{multline*}
  \left|\Sigma(n,x,F)-\E\left[\exp\left(-e^{-\beta x}\tilde{\mu}_{n,\beta}(1) F(\epsilon)\right)\right]\right|\\
  \leq \left|\Sigma(n,x,F)-\Sigma_{E_n}(n,x,F)\right|
  +\left|\Sigma^{L_0}_{E_n}(n,x,F)-\Sigma_{E_n}(n,x,F)\right|\\
  +\left|\widehat{\Sigma}^{L_0}_{E_n}(n,n-b,x,F)-\Sigma^{L_0}_{E_n}(n,x,F)\right|\\
  +\left|\widehat{\Sigma}^{L_0}_{E_n}(n,n-b,x,F)-\E\left(\exp\left(-e^{-\beta x}\tilde{\mu}_{n,\beta}(1) F(\epsilon)\right)\right]\right|.
\end{multline*}
Using Lemmas~\ref{lem:ajouteE},~\ref{lem:ajouteL},~\ref{lem:compteB} and~\ref{lem:indep}, we have
\begin{equation}
\left|\Sigma(n,x,F)-\E\left[\exp\left(-e^{-\beta x}\tilde{\mu}_{n,\beta}(1) F(\epsilon)\right)\right]\right|\leq 4 \varepsilon x e^{-x},
\end{equation}
where $\tilde{\mu}_{n,\beta}(1)$ and $F(\epsilon)$ are independent. Recall that $\Sigma(n,x,F)=\E\left[\exp\left(-e^{-\beta x}\widetilde{\mu}_{n,\beta}(F)\right)\right]$.  It hence follows that
\begin{multline}
\label{eq:facteurFe}
  \left|\frac{e^x}{x}\E\left[1-\exp\left(-  \theta e^{-\beta x} \widetilde{\mu}_{n,\beta}(F)\right)\right]-\frac{e^x}{x}\E\left[1-\exp\left(-  \theta e^{-\beta x} \tilde{\mu}_{n,\beta}(1) F(\epsilon)\right)\right]\right|\\
\leq \frac{e^x}{x}\left|\Sigma(n,x,F)-\E\left[\exp\left(-e^{-\beta x}\tilde{\mu}_{n,\beta}(1) F(\epsilon)\right)\right]\right|\leq 4 \varepsilon.
\end{multline}
We replace $\theta$ by $\theta F(\epsilon)$, and then deduce from \eqref{sansF} that for all $n$ sufficiently large,
\begin{equation*}
\left|\frac{e^x}{x}\E\left[\left.1-\exp\left(-  \theta e^{-\beta x} \tilde{\mu}_{n,\beta}(1) F(\epsilon)\right) \right| \epsilon\right]-C_\beta \theta^{\frac{1}{\beta}} F(\epsilon)^{\frac{1}{\beta}}\right|\leq \varepsilon. 
\end{equation*}
In particular, for all $n$ sufficiently large,
\begin{equation*}
\left|\frac{e^x}{x}\E\left[1-\exp\left(-  \theta e^{-\beta x} \tilde{\mu}_{n,\beta}(1) F(\epsilon)\right)\right]-C_\beta \theta^{\frac{1}{\beta}} \E\left[F(\epsilon)^{\frac{1}{\beta}}\right]\right|\leq \varepsilon.
\end{equation*}
Going back to \eqref{eq:facteurFe}, we have
\begin{equation*}
\left|\frac{e^x}{x}\E\left(1-\exp\{-  \theta e^{-\beta x} \widetilde{\mu}_{n,\beta}(F)\}\right)-C_\beta \theta^{\frac{1}{\beta}} \E[F(\epsilon)^{\frac{1}{\beta}}]\right|\leq 5\varepsilon,
\end{equation*}
which completes the proof and gives Proposition~\ref{tailprop}.
\end{proof}

\section{Proof of Theorem~\ref{thm:main} and Corollary~\ref{cor:interest}}
\label{sec:conclusion}

We apply the Laplace transform estimates obtained in the previous section to prove the main results of this article. We first study the convergence of the Laplace transform of $\tilde{\mu}_{n,\beta}(F)$. We recall that $Z_n = \sum_{|u|=n} V(u)e^{-V(u)}$ is a martingale, and that $Z_\infty = \lim_{n \to \infty} Z_n$ a.s.

\begin{proposition}
\label{propfi}
Under \eqref{eqn:supercritical}, \eqref{eqn:boundary}, \eqref{eqn:variance} and \eqref{eqn:integrability}, for any $\alpha \geq 0$ and any non-negative $F\in \calC_b(\calD)$,
\begin{multline}
\label{finnn}
  \lim_{l \to \infty} \limsup_{n \to \infty} \E\left[ \ind{Z_l>0} e^{-\alpha Z_l-\tilde{\mu}_{n+l,\beta}(F)} \right] = \lim_{l \to \infty} \liminf_{n \to \infty} \E\left[ \ind{Z_l>0}e^{-\alpha Z_l -\tilde{\mu}_{n+l,\beta}(F)} \right]\\
  = \E\left[ \exp\left(-C_\beta Z_\infty \E\left( F(\epsilon)^\frac{1}{\beta} \right)\right) e^{-\alpha Z_\infty} \ind{Z_\infty > 0}\right].
\end{multline}
In particular, conditionally on the survival event $S$, we have
\begin{equation}
\label{eqn:finnnn}
 \lim_{n \to \infty} \E\left[ \left.e^{-\tilde{\mu}_{n,\beta}(F)}\right| S \right] = \E\left[ \left.\exp\left(-C_\beta Z_\infty \E\left( F(\epsilon)^\frac{1}{\beta} \right)\right)\right|S \right].
\end{equation}
\end{proposition}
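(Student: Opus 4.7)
The strategy is to use the branching property at generation $l$ to split the Laplace functional of $\tilde{\mu}_{n+l,\beta}(F)$ into a conditionally independent product over particles of generation $l$, and to evaluate each factor via Proposition \ref{tailprop}. For each $u$ with $|u|=l$, set
\[
  \tilde{\mu}^{(u)}_{n+l,\beta}(F) := (n+l)^{3\beta/2}\sum_{z\geq u,\,|z|=n+l}e^{-\beta V(z)}F(H^{(n+l)}(z)),
\]
so that $\tilde{\mu}_{n+l,\beta}(F)=\sum_{|u|=l}\tilde{\mu}^{(u)}_{n+l,\beta}(F)$ with the summands conditionally independent given $\calF_l$. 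Setting $b_u:=\E[1-\exp(-\tilde{\mu}^{(u)}_{n+l,\beta}(F))\,|\,\calF_l]$, we have $\E[\exp(-\tilde{\mu}_{n+l,\beta}(F))\,|\,\calF_l]=\prod_{|u|=l}(1-b_u)$.

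The next step is to identify each $b_u$ asymptotically as $n\to\infty$. For fixed $l$ the first $l$ generations of any path $H^{(n+l)}(z)$ become negligible on the Brownian scale $\sqrt{\sigma^2(n+l)}$, so by the uniform continuity of $F$ one may, up to $o(1)$ errors, view $\tilde{\mu}^{(u)}_{n+l,\beta}(F)$ as distributed like $e^{-\beta V(u)}$ times an independent copy of $\tilde{\mu}_{n,\beta}(F)$ (the prefactor discrepancy $(n+l)^{3\beta/2}/n^{3\beta/2}\to 1$ is harmless). Applying \eqref{sansDelta2} with $x=V(u)$ then yields, uniformly for $V(u)\in[A,\tfrac{3}{2}\log n -A]$,
\[
  b_u = C_\beta\,\E[F(e_1)^{1/\beta}]\,V(u)\,e^{-V(u)}(1+o(1)).
\]
For $l$ large, on the survival event $\min_{|u|=l}V(u)$ exceeds any fixed $A$ with high probability, which both puts all $b_u$ in the admissible window of Proposition \ref{tailprop} and makes $\max_u b_u$ small, so that $\log(1-b_u)=-b_u+O(b_u^2)$ with summable error. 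Combining these facts one obtains, almost surely on $\{Z_l>0\}$,
\[
  \lim_{n\to\infty}\E\!\left[\exp(-\tilde{\mu}_{n+l,\beta}(F))\,\big|\,\calF_l\right] = \exp\!\bigl(-C_\beta\,\E[F(e_1)^{1/\beta}]\,Z_l\bigr).
\]
Multiplying by $\mathbf{1}_{\{Z_l>0\}}e^{-\alpha Z_l}$, taking the outer expectation and then sending $l\to\infty$, the almost sure convergence $Z_l\to Z_\infty$ together with dominated convergence gives the desired identity \eqref{finnn}. For the conditional statement \eqref{eqn:finnnn} on $S$, one specialises to $\alpha=0$ and uses $S=\{Z_\infty>0\}$ almost surely, together with the vanishing of $\tilde{\mu}_{n,\beta}(F)$ on the extinction event.

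The main obstacle is controlling the contribution of particles outside the admissible window $[A,\tfrac{3}{2}\log n - A]$. Particles with $V(u)>\tfrac{3}{2}\log n -A$ contribute negligibly since $xe^{-x}\to 0$; the truly delicate case is $V(u)<A$, where $b_u$ need not be small and the linearisation $\log(1-b_u)\approx -b_u$ breaks down. This is precisely why the statement is phrased as $\lim_{l\to\infty}\limsup_n$ rather than a direct double limit: by first sending $n\to\infty$ for fixed $l$ one gains access to Proposition \ref{tailprop}, and then sending $l\to\infty$ pushes $\min_{|u|=l}V(u)$ to infinity on survival, eliminating the offending particles in the limit. Executing this rigorously requires matching upper and lower bounds with errors that vanish in the iterated limit, using monotone truncations of the sum according to whether $V(u)$ belongs to the good range.
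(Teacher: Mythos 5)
Your proposal follows the same route as the paper: decompose $\tilde{\mu}_{n+l,\beta}(F)$ along the branching property at generation $l$, use Proposition~\ref{tailprop} to approximate each factor of the conditional Laplace transform as $1 - C_\beta\E[F(e_1)^{1/\beta}]V(u)e^{-V(u)}(1+o(1))$, sum to recover $Z_l$, and then pass to the limit $n\to\infty$ followed by $l\to\infty$. The paper's truncation set $\Xi_{n,l}=\{\,|u|=l : (\log l)/3\le V(u)\le\log n\,\}$ plays exactly the role of your ``admissible window''.

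There is a genuine gap, however. Your argument hinges on the claim that, almost surely on $\{Z_l>0\}$, $\E[\exp(-\tilde{\mu}_{n+l,\beta}(F))\mid\calF_l]\to\exp(-C_\beta\E[F(e_1)^{1/\beta}]Z_l)$ as $n\to\infty$. For fixed $l$ this does not hold on all of $\{Z_l>0\}$: with positive probability there are particles with $V(u)<A$ (even $V(u)<0$), for which Proposition~\ref{tailprop} gives no control on $b_u$, and the product over $u$ cannot be linearised. You diagnose this correctly in your closing paragraph, but you then only \emph{describe} the remedy (``matching upper and lower bounds $\ldots$ using monotone truncations'') rather than carry it out, and without it the passage from the conditional limit to \eqref{finnn} does not close. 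The paper supplies the two one-sided arguments explicitly: the upper bound drops the factors with $u\notin\Xi_{n,l}$ (using $\Psi\le 1$) and applies $1-x\le e^{-x}$ to the remaining ones; the lower bound works on $\{3M_l\ge\log l\}$ (probability $\to 1$ by the Hu--Shi result cited in the paper), controls particles with $V(u)\ge\log n$ via $1-\Psi(x)\le cxe^{-x}$ from \cite[Proposition 2.1]{Mad11}, and uses $1-x\ge e^{-(1+\epsilon)x}$ for small $x$. These monotone estimates, together with $\lim_n\sum_{u\in\Xi_{n,l}}V(u)e^{-V(u)}=Z_l$, are what actually produce bounds that squeeze to the right answer in the iterated $n\to\infty$, $l\to\infty$ limit; your proposal names the obstacle and the right idea for overcoming it, but stops short of the key technical step.
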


\begin{remark}
Theorem~\ref{thm:main} is a consequence of \eqref{eqn:finnnn}. However \eqref{finnn} enlightens the appearance of $Z_\infty$.
\end{remark}

\begin{proof}
Note that \eqref{eqn:finnnn} is a direct consequence of \eqref{finnn} as $S = \{Z_\infty>0\}$. We observe that
\begin{equation}
  \tilde{\mu}_{n+l,\beta}(F) = \left( \frac{n+l}{n}\right)^\frac{3\beta}{2} \sum_{|u|=l} e^{-\beta V(u)} n^{\frac{3\beta}{2}} \sum_{\substack{|v|=n+l \\ v \geq u}} e^{-\beta (V(v)-V(u))} F(H^{n+l}(v)).
\end{equation}
For $|u|=l$, $v\geq u$ with $|v|=n+l$ and $t \in [0,1]$, we write $H^{(n),u}(v)_t := \frac{V(v_{l+\floor{nt}})-V(u)}{\sqrt{\sigma^2 n}}$ and
\[
  \tilde{\mu}^{(u)}_{n,\beta}(F) := n^{3\beta/2}\sum_{\substack{|v|=n+l \\ v \geq u}} e^{-\beta (V(v)-V(u))} F\left(H^{(n),u}(v)\right).
\]
If we compare $ \tilde{\mu}_{n+l,\beta}(F)$ with $ \sum_{|u|=l} e^{-\beta V(u)} \tilde{\mu}^{(u)}_{n,\beta}(F)$, we obtain that for any compact $\mathcal{K}_0$ and for all sufficiently large $n$ and $\log n \gg \ell$,
\begin{align*}
&\chi_n:=\Big\vert \tilde{\mu}_{n+l,\beta}(F)-\sum_{|u|=l} e^{-\beta V(u)} \tilde{\mu}^{(u)}_{n,\beta}(F)\Big\vert\\
\leq & o_n(1) \tilde{\mu}_{n+\ell,\beta}+ \sum_{|u|=\ell}e^{-\beta V(u)}\ind{\max_{1\leq k\leq \ell}V(u_k)\geq \log n}\tilde{\mu}^{(u)}_{n,\beta}(1)\\
&+ \tilde{\mu}_{n+\ell,\beta}(\ind{H^{(n+\ell)}\notin\mathcal{K}_0}+\ind{H^{(n),v_\ell}\notin \mathcal{K}_0; \max_{1\leq k\leq \ell}V(u_k)\leq \log n})
\end{align*}

As long as we could prove the convergence in probability of $\chi_n$ towards zero, the convergence in law of $ \sum_{|u|=l} e^{-\beta V(u)} \tilde{\mu}^{(u)}_{n,\beta}(F)$ holds also for $ \tilde{\mu}_{n+l,\beta}(F)$. In fact, by Proposition 4.6 of \cite{Mad11}, $\tilde{\mu}_{n+\ell,\beta}$ is tight. Moreover, one sees that $\P(\max_{|u|=\ell}\max_{1\leq k\leq \ell}V(u_k)\geq \log n)\rightarrow 0$ as $n\uparrow\infty$. So we only need to consider $\tilde{\mu}_{n,\beta}(\ind{H^{(n)}\notin\mathcal{K}_0})$ for some well chosen compact $\mathcal{K}_0$.  Observe that for any $\delta>0$,
\begin{align*}
&\P\left(\tilde{\mu}_{n,\beta}\ind{H^{(n)}\notin\mathcal{K}_0}\geq \delta\right)\\
\leq &\P(\inf V(u)\leq -y)+\P\left(n^{3\beta/2}\sum_{|u|=n}e^{-\beta V(u)}\ind{\min_{1\leq k\leq n}V(u_k)\geq-y; H^{(n)}(u)\notin\mathcal{K}_0}\geq \delta\right)\\
\leq & e^{-y}+\P\left(n^{3\beta/2}\sum_{|u|=n}e^{-\beta V(u)}\ind{\min_{1\leq k\leq n}V(u_k)\geq-y; H^{(n)}(u)\notin\mathcal{K}_0}\geq \delta\right)
\end{align*}
We are going to take $y\geq1$ large enough so that the first  term is sufficiently small. For $C\geq3/2$ a large constant and $L\geq0$, $x=\frac{\log \delta}{\beta}$, one sees that 
\begin{equation}\label{bdmuKc}
\P\left(n^{3\beta/2}\sum_{|u|=n}e^{-\beta V(u)}\ind{\min_{1\leq k\leq n}V(u_k)\geq-y; H^{(n)}(u)\notin\mathcal{K}_0}\geq 2\delta\right)\leq \P_1+\P_2
\end{equation}
where
\begin{align*}
\P_1:=&\P\left(n^{3\beta/2}\sum_{|u|=n}e^{-\beta V(u)}\ind{\min_{1\leq k\leq n}V(u_k)\geq-y, \min_{n/2\leq k\leq n}V(u_k)\leq a_n(x+L)}\geq e^{\beta x}\right)\\
\P_2:=&\P\left(n^{3\beta/2}\sum_{|u|=n}e^{-\beta V(u)}\ind{\min_{1\leq k\leq n}V(u_k)\geq-y, \min_{n/2\leq k\leq n}V(u_k)\geq a_n(x+L); H^{(n)}(u)\notin\mathcal{K}_0}\geq e^{\beta x}\right)
\end{align*}
Observe that 
\begin{align*}
\P_1=&\P_y\left(n^{3\beta/2}\sum_{|u|=n}e^{-\beta V(u)}\ind{\min_{1\leq k\leq n}V(u_k)\geq 0, \min_{n/2\leq k\leq n}V(u_k)\leq a_n(x-y+L)}\geq e^{\beta (x-y)}\right)
\end{align*}
which by Lemma 4.9 of \cite{Mad11} is bounded by $C_1(1+y)e^{-C_2L-x}$ with $C_1, C_2>0$. Let us take $L=y$. Then by Markov inequality and Many-to-one lemma, one sees that
\begin{align*}
\P_2\leq & n^{3\beta/2}e^{-\beta x} \E\left[ e^{(1-\beta)S_n}; \mS_n\geq-y, \mS_{[n/2,n]}\geq a_n(x+y); \ns\notin \mathcal{K}_0\right]\\
=& e^{-x+(\beta-1)y} n^{3/2}\E\left[e^{(1-\beta)(S_n-a_n(x+y))}; \mS_n\geq-y, \mS_{[n/2,n]}\geq a_n(x+y); \ns\notin \mathcal{K}_0\right]
\end{align*}
By Lemma \ref{cor:rw}, for any $\eta>0$, we could find a compact $\mathcal{K}_0=K_{f}$ for $f(t)=e^{(1-\beta)t}$ which is Riemann integrable such that 
\[
\sup_{y\in[0,a_n]}n^{3/2}\E\left[e^{(1-\beta)(S_n-a_n(x+y))}; \mS_n\geq-y, \mS_{[n/2,n]}\geq a_n(x+y); \ns\notin \mathcal{K}_0\right]\leq \eta (1+y).
\]
Then given $\delta>0$, for any $\varepsilon>0$, we take $y=y_\varepsilon\geq1$ such that $e^{-y}+C_1(1+y)e^{-C_2y-x}\leq \varepsilon/2$, and we take the compact $\mathcal{K}_0$ for $\eta=\frac{\varepsilon}{2(1+y_\varepsilon)}e^{-(\beta-1)y_\varepsilon}\delta^{1/\beta}$. These choices imply that
\[
\P\left(\tilde{\mu}_{n,\beta}\ind{H^{(n)}\notin\mathcal{K}_0}\geq 2\delta\right)\leq \varepsilon.
\]
Similar arguments work for $\tilde{\mu}_{n+\ell,\beta}\ind{H^{(n),v_\ell}\notin \mathcal{K}_0; \max_{1\leq k\leq \ell}V(u_k)\leq \log n}$. We hence conclude the convergence in probability of $\chi_n$ to zero.

It remains to prove the convergence in law for $ \sum_{|u|=l} e^{-\beta V(u)} \tilde{\mu}^{(u)}_{n,\beta}(F)$. Applying the Markov property at time $l$, we have
\[
  \E\left[ e^{-\alpha Z_l}\ind{Z_l>0} e^{-\sum_{|u|=l} e^{-\beta V(u)} \tilde{\mu}^{(u)}_{n,\beta}(F)} \right] 
  = \E\left[ e^{-\alpha Z_l}\ind{Z_l>0}  \prod_{|u|=l} \Psi(V(u))\right],
\]
where $\Psi :x \mapsto \E\left[ e^{- e^{-\beta x} \tilde{\mu}_{n,\beta}(F)} \right]$. For any $1\leq l\leq n$, we set $\Xi_{n,l} := \left\{ |u| = l : \frac{\log l}{3} \leq V(u)\leq \log n \right\}$, and we prove that $\prod_{|u|=l} \Psi(V(u)) \approx \prod_{u \in \Xi_{n,l}} \Psi(V(u))$ with high probability. Note that
\begin{equation}
  \label{eqn:clear}
  \lim_{n \to \infty} \sum_{u \in \Xi_{n,l}} V(u) e^{-V(u)} = Z_l \text{ a.s. for $l$ large enough},
\end{equation}
as $\liminf_{l \to \infty} \frac{M_l}{\log l} = \frac{1}{2} > \frac{1}{3}$ (see~\cite{HuS09}).

We first observe that
\[
  \E\left[ e^{-\alpha Z_l}\ind{Z_l>0} e^{-\sum_{|u|=l} e^{-\beta V(u)} \tilde{\mu}^{(u)}_{n,\beta}(F)} \right] \leq \E\left[ e^{-\alpha Z_l}\ind{Z_l>0}  \prod_{u\in\Xi_{n,l}} \Psi(V(u))\right].
\]
By Proposition~\ref{tailprop}, for any $\varepsilon>0$, there exist $L,N$ such that for any $n \geq N$, $l \geq L$ and $u \in \Xi_{n,l}$,
\begin{equation}
 \label{constailprop} 
  \left| \Psi(V(u)) - 1 + \psi_\beta V(u)e^{-V(u)} \right| \leq \varepsilon V(u) e^{-V(u)}n,
\end{equation}
where $\psi_\beta = C_\beta \E\left[ F(\epsilon)^\frac{1}{\beta}\right]$. This yields
\begin{align*}
  \E\left[ e^{-\alpha Z_l}\ind{Z_l>0} e^{-\sum_{|u|=l} e^{-\beta V(u)} \tilde{\mu}^{(u)}_{n,\beta}(F)} \right]
  &\leq \E\left[e^{-\alpha Z_l} \ind{Z_l>0} \prod_{u\in  \Xi_{n,l}} \left( 1 - \left( \psi_\beta - \varepsilon\right) V(u)e^{-V(u)} \right) \right]\\
  &\leq \E\left[ e^{-\alpha Z_l}\ind{Z_l>0}  e^{- \sum_{u\in \Xi_{n,l}} V(u)e^{-V(u)}\left(\psi_\beta-\varepsilon\right)} \right],
\end{align*}
where the last inequality follows from the fact that for any $x \geq 0$, $1-x\leq e^{-x}$. By \eqref{eqn:clear} and the convergence of the derivative martingale, we have
\[
  \limsup_{n \to \infty} \E\left[ e^{-\alpha Z_l}\ind{Z_l>0}  e^{-\sum_{|u|=l} e^{-\beta V(u) \tilde{\mu}^{(u)}_{n,\beta}(F)}} \right] \leq \E\left[ e^{-\left(\alpha +\psi_\beta - \varepsilon\right)Z_l}\ind{Z_l>0}  \right].
\]
Letting $l \to \infty$ then $\varepsilon \to 0$, it leads to
\begin{equation}
  \label{eqn:sup}
  \limsup_{l \to \infty} \limsup_{n \to \infty} \E\left[ e^{-\alpha Z_l}\ind{Z_l>0}  e^{-\sum_{|u|=l} e^{-\beta V(u) \tilde{\mu}^{(u)}_{n,\beta}(F)}} \right] \leq \E\left[ e^{-\left(\alpha + C_\beta \E\left(F(\epsilon)^\frac{1}{\beta}\right)Z_\infty\right)}\ind{Z_\infty>0} \right]. 
\end{equation}

The lower bound follows from similar arguments. Applying once again the Markov property at time $l$,
\[
  \E\left[ e^{-\alpha Z_l}\ind{Z_l>0}  e^{-\sum_{|u|=l} e^{-\beta V(u) \tilde{\mu}^{(u)}_{n,\beta}(F)}} \right] \geq \E\left[ e^{-\alpha Z_l} \ind{Z_l>0, 3 M_l \geq \log l} \prod_{|u|=l} \Psi(V(u)) \right].
\]
Therefore,
\begin{align*}
    &\E\left[ e^{-\alpha Z_l}\ind{Z_l>0}  e^{-\sum_{|u|=l} e^{-\beta V(u) \tilde{\mu}^{(u)}_{n,\beta}(F)}} \right]\\
    &\qquad\qquad\qquad\qquad \geq \E\left[ e^{-\alpha Z_l}\ind{Z_l>0, 3 M_l \geq \log l} \prod_{u \in \Xi_{n,l}} \Psi(V(u)) \times \prod_{|u|=l, u \not \in \Xi_{n,l}} \Psi(V(u)) \right]\\
    &\qquad\qquad\qquad\qquad = \E\left[ e^{-\alpha Z_l}\ind{Z_l>0, 3 M_l \geq \log l} \prod_{u \in \Xi_{n,l}} \Psi(V(u)) \times \prod_{|u|=l, V(u) \geq \log n} \Psi(V(u)) \right].
\end{align*}
For $n \geq 1$ large enough, by~\cite[Proposition 2.1]{Mad11}, there exists $c>0$ such that
\begin{equation}
  \label{consMad11}
  \forall x \geq \log n, 1 - \Psi(x) \leq c x e^{-x}.
\end{equation}
Consequently, \eqref{constailprop} yields
\begin{align*}
  &\E\left[ e^{-\alpha Z_l}\ind{Z_l>0}  e^{-\sum_{|u|=l} e^{-\beta V(u) \tilde{\mu}^{(u)}_{n,\beta}(F)}} \right]\\
  \geq &\E\left[ e^{-\alpha Z_l}\ind{Z_l>0, 3 M_l \geq \log l} \prod_{|u|=l} \left( 1 - \left[ \ind{V(u) \leq \log n} \psi_\beta + c\ind{V(u) \geq\log  n} + \varepsilon\right] V(u)e^{-V(u)} \right) \right]\\
  \geq & \E\left[ e^{-\alpha Z_l}\ind{Z_l>0, 3 M_l \geq \log l} e^{-(1+\varepsilon) \sum_{|u|=l} \left( \ind{V(u) \leq \log n} \psi_\beta + c\ind{V(u) \geq\log  n} + \varepsilon\right) V(u)e^{-V(u)}} \right],
\end{align*}
for $l \geq 1$ large enough, as for any $x > 0$ small enough, $1-x \geq e^{-(1+\varepsilon)x}$. Letting $n \to \infty$, we have
\[
  \liminf_{n \to \infty} E\left[ e^{-\alpha Z_l}\ind{Z_l>0}  e^{-\sum_{|u|=l} e^{-\beta V(u) \tilde{\mu}^{(u)}_{n,\beta}(F)}} \right]
  \geq \E\left[ \ind{Z_l > 0, 3M_l \geq \log l} e^{-\left( \alpha + (1 + \varepsilon) (\psi_\beta + \varepsilon)\right)} \right].
\]
Finally, using the fact that $S = \{ Z_\infty > 0\}$ and that $\liminf_{l \to \infty} M_l/\log l > 1/3$, we obtain, letting $l \to \infty$ then $\varepsilon \to 0$
\[
  \liminf_{l\to\infty}\liminf_{n \to \infty}  \E\left[ e^{- \sum_{|u|=l} e^{-\beta V(u)} \tilde{\mu}^{(u)}_{n,\beta}(F)} e^{-\alpha Z_l}\ind{Z_l>0} \right] \geq \E\left[ e^{-\left[\alpha + C_\beta \E\left(F(\epsilon)^\frac{1}{\beta}\right)\right]Z_\infty}\ind{Z_\infty>0} \right].
\]
This last equation, as well as \eqref{eqn:sup}, completes the proof of \eqref{finnn}.
\end{proof}

We now prove that for any $F\in\calC_b(\calD)$, we have
\begin{equation}
\label{eq:conv}
 \mu_{n,\beta}(F) \wconv \sum_{k \in \N} p_k F(\epsilon^{(k)}), 
\end{equation}
where $(\epsilon^{(k)})$ is a sequence of i.i.d. normalized Brownian excursions, and $(p_k, k \in \N)$ follows an independent Poisson-Dirichlet distribution with parameter $(\frac{1}{\beta},0)$, which by \cite[Theorem 5.2]{Bil99} is enough to conclude the proof of Theorem \ref{thm:main}.

\begin{proof}[Proof of Theorem~\ref{thm:main}]
We recall that $\mu_{n,\beta}$ is defined on $S$ by $\mu_{n,\beta}(F) = \frac{\tilde{\mu}_{n,\beta}(F)}{\tilde{\mu}_{n,\beta}(1)}$, for $F \in \calC_b(\calD)$. To prove the convergence in law of $\mu_{n,\beta}$, we start by identifying the limit law of $\tilde{\mu}_{n,\beta}$.

Let $(\Delta_k, k \in \N)$ be a Poisson point process on $\R$ with intensity $e^x dx$ and $(\epsilon^{(k)}, k \in \N)$ be an independent sequence of i.i.d. normalized Brownian excursions, independent from the branching random walk. We introduce a random measure $\tilde{\mu}_{\infty,\beta}$ on $\calD$ by
\[
  \tilde{\mu}_{\infty,\beta}
  = Z_\infty^\beta \sum_{k=1}^{\infty} e^{-\beta (\Delta_k -c_{\star}(\beta))} \delta_{\epsilon^{(k)}},
\]
where $ c_{\star}(\beta) := \log \frac{C_\beta}{-\int_\R \left(e^{-e^{-\beta u}}-1\right) e^u du}$. We first prove that for any non-negative $F \in \calC_b(\calD)$,
\begin{equation}
  \label{eqn:cvtilde}
  \tilde{\mu}_{n,\beta}(F) \wconv \tilde{\mu}_{\infty,\beta}(F).
\end{equation}

We compute the Laplace transform of $\tilde{\mu}_{\infty,\beta}(F)$. As $S = \{ Z_\infty > 0\}$, for any $\theta > 0$, we have
\begin{align*}  
  \E\left[ \exp\left( - \theta \tilde{\mu}_{\infty,\beta}(F) \right)\ind{S} \right]
  &=\E\left[\exp\left(-\theta Z_\infty^\beta \sum_{k=1}^{\infty} e^{-\beta [\Delta_k -c_{\star}(\beta)]} F(\epsilon^{(k)})\right)\ind{Z_\infty>0}\right]\\
  &= \E\left[ \exp\left( - \sum_{k = 1}^{\infty} \phi \left(\theta e^{-\beta [\Delta_k-\log Z_\infty - c_{\star}(\beta)]} \right)  \right)\ind{Z_\infty>0} \right],
\end{align*}
where $\phi : x \mapsto \log \E\left[ \exp\left(-uF(\epsilon^{(1)})\right) \right]$. By Campbell's formula, 
\[
\E\left[ \exp\left( - \theta \tilde{\mu}_{\infty,\beta}(F) \right)\ind{S} \right]= \E\left[ \exp\left( \int_\R e^{\phi\left( \theta e^{-\beta [x-\log Z_\infty - c_{\star}(\beta)]}\right)+x-1} dx \right)\ind{Z_\infty>0} \right].
\]
As $\phi\left( \theta e^{-\beta [x-\log Z_\infty - c_{\star}(\beta)]}\right)= \log \E\left[\left. \exp\left(-\theta e^{-\beta [x-\log Z_\infty - c_{\star}(\beta)]}F(\epsilon^{(1)})\right)\right| Z_\infty \right]$, it yields
\begin{align*}
&\E\left[ \exp\left( - \theta \tilde{\mu}_{\infty,\beta}(F) \right) \ind{S}\right]\\
&=\E\left[ \exp\left( \int_\R \E\left[ \exp\left(-\theta e^{-\beta [x-\log Z_\infty - c_{\star}(\beta)]}F(\epsilon^{(1)})\right)\Big\vert Z_\infty \right]-1\right) e^x dx \ind{Z_\infty>0}\right]\\
&=\E\left[ \exp\left(\E\left[ \int_\R \left(\exp\left(-\theta e^{-\beta [x-\log Z_\infty - c_{\star}(\beta)]}F(\epsilon^{(1)})\right)-1\right) e^x dx \Big\vert Z_\infty\right] \right)\ind{Z_\infty>0}\right].
\end{align*}
By change of variables $u=x-\log Z_\infty - c_{\star}(\beta) - \frac{1}{\beta}\left[\log \theta +\log F(\epsilon^{(1)})\right]$, we obtain that
\begin{align*}
&\E\left[ \int_\R \left(\exp\left(-\theta e^{-\beta [x-\log Z_\infty - c_{\star}(\beta)]}F(\epsilon^{(1)})\right)-1\right) e^x dx \Big\vert Z_\infty\right]\\
 =&\E\left[ \int_\R \left(e^{- e^{-\beta u}}-1\right) e^{u+\log Z_\infty+c_{\star}(\beta)+\frac{1}{\beta}\left[\log \theta +\log F(\epsilon^{(1)})\right]} du \Big\vert Z_\infty\right] \\
 = &e^{c_{\star}(\beta)}Z_\infty \E\left[\left(\theta F(\epsilon^{(1)})\right)^\frac{1}{\beta}\right]\int_\R \left(e^{-e^{-\beta u}}-1\right) e^u du =-C_\beta Z_\infty \E\left( (F(\epsilon^{(1)})\theta)^\frac{1}{\beta} \right),
\end{align*}
since $ c_{\star}(\beta) = \log \frac{C_\beta}{-\int_\R \left(e^{-e^{-\beta u}}-1\right) e^u du}$. We thus end up with
\[
\E\left[ \exp\left( - \theta \tilde{\mu}_{\infty,\beta}(F) \right)\ind{S} \right]=\E\left[ \exp\big\{-C_\beta Z_\infty \E\left( (F(\epsilon^{(1)})\theta)^\frac{1}{\beta} \right)\big\}\ind{Z_\infty>0}\right].
\]

Consequently, by Proposition~\ref{propfi}, for any $\theta > 0$ we have
\[
  \lim_{n \to \infty} \E\left[ \ind{Z_\infty > 0} e^{-\theta \tilde{\mu}_{n,\beta}(F)} \right] = \E\left[  \ind{Z_\infty > 0} e^{-\theta \tilde{\mu}_{\infty,\beta}(F)} \right],
\]
which concludes \eqref{eqn:cvtilde} by L\'evy's theorem.

Furthermore, for any $F \in \calC_b(\calD)$ and $\theta_1,\theta_2,\theta_3 \in (0,\infty)$, setting $F_+$ (respectively $F_-$) the positive (resp. negative) part of $F$, we have
\[
  \lim_{n \to \infty} \E\left[ \ind{Z_\infty > 0} e^{-\theta \tilde{\mu}_{n,\beta}(\theta_1 F_+ + \theta_2 F_- + \theta_3)} \right] = \E\left[  \ind{Z_\infty > 0} e^{-\theta \tilde{\mu}_{\infty,\beta}(\theta_1 F_+ + \theta_2 F_- + \theta_3)} \right],
\]
yielding
\[
  \left(\tilde{\mu}_{n,\beta}(F_+), \tilde{\mu}_{n,\beta}(F_-), \tilde{\mu}_{n,\beta}(1)\right) \wconv \left(\tilde{\mu}_{\infty,\beta}(F_+), \tilde{\mu}_{\infty,\beta}(F_-), \tilde{\mu}_{\infty,\beta}(1)\right).
\]
Using the fact that $\tilde{\mu}_{\infty,\beta}(1)>0$ a.s. on $S$, we have
\[
  \mu_{n,\beta}(F) = \frac{\tilde{\mu}_{n,\beta}(F_+) - \tilde{\mu}_{n,\beta}(F_-)}{\tilde{\mu}_{n,\beta}(1)} \underset{n \to \infty}{\Longrightarrow} \sum_{k\geq0}\frac{e^{-\beta\Delta_k}}{\sum_{j=0}^{\infty} e^{-\beta \Delta_j}}F(\epsilon^{(k)})\quad \mathrm{on} \quad S.
\]
\begin{remark}
\label{rem:inthm}
We obtain in a similar way the joint convergence of $\left( \mu_{n,\beta}(F_1), \cdots \mu_{n,\beta}(F_k)\right)$, for any $(F_1,\ldots, F_k) \in \calC_b(\calD)^k$.
\end{remark}

Using~\cite[Proposition 10]{PiY97}, for a Poisson point process $(\Delta_k, k\geq 0)$ of intensity $e^x$, we have
\[
 \left( \frac{e^{-\beta\Delta_k}}{\sum_{j=0}^{\infty} e^{-\beta \Delta_j}}, k \geq 0 \right) \egaldistr (p_k, k \geq 0),
\]
where $(p_k)$ is a process of Poisson-Dirichlet distribution with parameters $(\frac{1}{\beta},0)$.
\end{proof}

We conclude this article proving that Theorem~\ref{thm:main} implies Corollary \ref{cor:interest}.
\begin{proof}[Proof of Corollary \ref{cor:interest}]
We first observe that for all $t \in (0,1)$, the function
\[
  \phi_t : (f,g) \in \calD^2 \mapsto \ind{\forall s < t, f(s) = g(s)}
\]
is almost everywhere continuous for the law $\mu_{\infty,\beta}^{\otimes 2}$. Therefore, by Portmanteau theorem, we have
\[
  \lim_{n \to \infty} \mu_{n,\beta}^{\otimes 2}(\phi_t) = \mu_{\infty,\beta}^{\otimes 2} (\phi_t) = \sum_{k \geq 0} p_k^2 \quad \text{in law,}
\]
by Theorem \ref{thm:main}. This proves that for all $t \in (0,1)$
\[
  \lim_{n \to \infty} \mu_{n,\beta}^{\otimes 2}\left( \inf\{ s > 0 : V(z_\floor{ns}) \neq V(z'_\floor{ns}\} \geq t \right)  = \rho_\beta \quad \text{ in law}.
\]

We now prove this convergence in law holds simultaneously for all $t \in (0,1)$. For any $n \in \N$ and $t \in (0,1)$, set
\[
  F^{(n,\beta)}(t) = \mu_{n,\beta}^{\otimes 2}\left( \inf\{ s > 0 : V(z_\floor{ns}) \neq V(z'_\floor{ns}\} \leq t \right).
\]
By monotonicity of repartition functions, we have $F^{(n,\beta)}(t) \geq F^{(n,\beta)}(s)$ for all $t \geq s$. Moreover, by dominated convergence theorem, we have
\[
  \lim_{n \to \infty} \E(F^{(n,\beta)}(t) - F^{(n,\beta)}(s)) = \E(F^{(\infty,\beta)}(t)) - \E(F^{(\infty,\beta)}(s)) = 0.
\]
As a result, $F^{(n,\beta)}(t) - F^{(n,\beta)}(s)$ being a non-negative random variable, this implies that it converges to 0 in probability. Therefore, by Slutsky's lemma, we have
\[
  \lim_{n \to \infty} (F^{(n,\beta)}(t),F^{(n,\beta)}(t) - F^{(n,\beta)}(s)) = (1 - \rho_\beta,0) \quad \text{ in law.}
\]
With this line of reasoning, we prove that the convergence of $F^{(n,\beta)}(t)$ to $1-\rho_\beta$ holds simultaneously for all $t \in (0,1) \cap \mathbb{Q}$. Therefore, we obtain the convergence
\[
\lim_{n \to \infty} \mu_{n,\beta}^{\otimes 2}\left( \inf\{ t > 0 : V(z_\floor{nt}) \neq V(z'_\floor{nt}\} \in dx \right)  =  \rho_\beta \delta_1 + (1-\rho_\beta)\delta_0 \quad \text{ in law},
\]
which concludes the proof.
\end{proof}

\bibliographystyle{plain}

\end{document}